\tikzset{cross/.style={cross out, draw, 
         minimum size=2*(#1-\pgflinewidth), 
         inner sep=0pt, outer sep=0pt},
         cross/.default={3pt}}
\newcommand{\vertiii}[1]{{\left\vert\kern-0.25ex\left\vert\kern-0.25ex\left\vert #1 
    \right\vert\kern-0.25ex\right\vert\kern-0.25ex\right\vert}}
\def\R{\mathbb{R}}
\def\C{\mathbb{C}}
\def\N{\mathbb{N}}
\newcommand{\E}{\mathcal{E}}
\newcommand{\Mo}{\mathcal{M}_1}
\newcommand{\Mt}{\mathcal{M}_2}
\newcommand{\X}{\mathcal{X}}
\newcommand{\Y}{\mathcal{Y}}
\newcommand{\Z}{\mathcal{Z}}
\DeclareMathOperator*{\argmin}{arg\!\min}
\DeclareMathOperator*{\essup}{esssup}
\DeclareMathOperator*{\diam}{diam}
\theoremstyle{plain}
\newtheorem{theorem}{Theorem}[section]
\newtheorem{lemma}[theorem]{Lemma}
\newtheorem{proposition}[theorem]{Proposition}
\theoremstyle{definition}
\newtheorem{definition}[theorem]{Definition}
\newtheorem{assumption}{Assumption}
\newtheorem*{claim}{Claim}
\theoremstyle{remark}
\newtheorem{remark}[theorem]{Remark}
\title[Optimal set-valued decoders and their accuracy bounds]{On the existence of optimal set-valued decoders and their accuracy bounds for ill-posed inverse problems}  
\author{Nina M. Gottschling} 
\address{Department of Applied Mathematics and Theoretical Physics, University of Cambridge and now at Computing and Computational Sciences, Oak Ridge National Laboratory (ORNL), Oak Ridge, Tennessee.}
\email{gottschlinnm@ornl.gov}
\author{Paolo Campodonico} 
\address{Department of Applied Mathematics and Theoretical Physics, University of Cambridge}
\email{pc628@cam.ac.uk}
\author{Vegard Antun} 
\address{Department of Mathematics, University of Oslo}
\email{vegarant@math.uio.no}
\author{Anders C. Hansen} 
\address{Department of Applied Mathematics and Theoretical Physics, University of Cambridge}
\email{ach70@cam.ac.uk}
\date{}
\begin{document}
\keywords{Optimal decoders/reconstruction maps, ill-posed inverse problems, compressed sensing, learning in inverse problems, Bayesian inverse problems, approximation theory}
\subjclass[2020]{41A46, 65J22, 15A29 (primary) and 94A08, 68T07 (secondary)}

\newpage
\setcounter{page}{1}

\maketitle

\vspace{-5mm}	

\begin{abstract}
Ill-posed inverse problems occur everywhere in the sciences including medical imaging, radar, astronomy etc., yielding underdetermined or ill-posed linear (non-linear) reconstruction problems. There are now a myriad of techniques to design decoders/reconstruction-methods that can tackle such problems, ranging from optimization based approaches, such as compressed sensing, to data-driven techniques such as deep learning (DL), and variants in between the two techniques. The variety of methods begs for a unifying approach to determine the existence of optimal decoders and fundamental accuracy bounds, in order to facilitate a theoretical and empirical understanding of the performance of existing and future methods. Such a theory must allow for both single-valued and set-valued decoders, as underdetermined and ill-posed inverse problems typically have multiple solutions. Indeed, set-valued decoders arise due to non-uniqueness of minimizers in optimisation problems, such as in compressed sensing, and for DL based decoders in generative adversarial models, such as diffusion models and ensemble models. In this work we provide a framework for assessing the lowest possible reconstruction accuracy in terms of worst-case and average errors. The universal bounds only depend on the measurement model $F$, the model class $\mathcal{M}_1$ and the noise model $\mathcal{E}$. For linear $F$ these bounds depend on its kernel, and in the non-linear case the concept of kernel is generalized for undersampled and ill-posed settings. Additionally, we provide set-valued variational solutions that obtain the lowest possible reconstruction error.
\end{abstract}

\section{Introduction}

Finite dimensional inverse problems are ubiquitous in the computational sciences as they naturally appear in a plethora of applications. An incomplete list of examples includes most types of computational imaging \cite{adcock2021compressive, bouman2022foundations, barrett2013foundations}, matrix completion \cite{candes2012exact}, parametric PDEs/system identification \cite{adcock2022sparse}, phase retrieval \cite{shechtman2015phase, candes2013phaselift, fannjiang2020numerics}, quantized sampling \cite{4558487} etc. Most often, we can express these problems using the following general model:
\vspace{-2pt}
\begin{align}\label{eq:sampling1}
	\text{Given noisy measurements } y = F(x,e) \text{ of } x \in \Mo \text{ and }  e \in \E, \text{ recover } x.
\end{align}
Here $x$ represents the signal of interest, while $e$ represents the noise. The sets $\mathcal{M}_1$ and $\mathcal{E}$ describe the signals of interest and the potential noise, respectively. The function $F \colon \mathcal{M}_1\times \mathcal{E} \to \mathcal{M}_2^{\E}$ models the forward process, and is deliberately kept general to encompass many known models. Examples include
\begin{align}
	y &= G(x) + e,    &\text{(additive noise)} \\
	y &= G(x)\odot e, &\text{(multiplicative noise)} \\
	y &= G(x)\odot e_1 + e_2, &\text{(mixture of multiplicative and additive noise)}
\end{align}
where $G \colon \mathcal{M}_1 \to \mathcal{M}_2$ is a linear or non-linear forward map, and the dimension of the set $\mathcal{E}$ depends on the considered model in such a way that the point-wise multiplication $\odot$ is defined. 

\noindent In most instances of interest, the problem of recovering $x$ given $y$ is ill-posed or undersampled, unless further assumptions are made. An inverse problem is ill-posed, if it is not well-posed in the sense of Hadamard \cite{hadamard1902problemes,hadamard2003lectures}. In particular, it is ill-posed if either there does not exist a solution $x$ for every $y$, or the solution is not unique, or the solution does not depend continuously on $y$. We consider the case when the solution is not unique. This can be the case if the forward model $F$ is either poorly conditioned or dimensionality reducing. A standard assumption for making the problem \eqref{eq:sampling1} tractable is to assume that the noiseless forward model is injective when restricted to the model class $\mathcal{M}_1$. This assumption is a cornerstone for models with additive noise, as it allows for accurate reconstruction up to the noise level in both the linear \cite{traonmilin2018stable, FoucartRauhutCSbook, fundamental14} and non-linear \cite{keriven2018instance} setting.  Traditionally, the set $\mathcal{M}_1$ has been given a precise mathematical description and reconstruction methods have been designed for the given choice of $\mathcal{M}_1$. Examples of sets $\mathcal{M}_1$ include sparse vectors \cite{candes2008restricted}, union of subspaces \cite{blumensath2011sampling}, manifolds \cite{baraniuk2009random}, sparsity in a given basis \cite{adcock2017breaking} or frame \cite{poon2017structure}, matrices with low rank \cite{candes_oracle, NSP_low_rank}, etc. 

\noindent More recently, data-driven approaches \cite{Arr19, jin2017deep} have emerged as an alternative to many of these standard methods, often promising superior performance \cite{Bo-18}. Methods based on data typically do not specify the solution set $\mathcal{M}_1$, but aim to learn the reconstruction mapping $\Psi\colon \mathcal{M}_2\to \mathcal{M}_1$ given a finite number of training data $\mathcal{T}\subset \mathcal{M}_1$ and access to the forward model $F$ \cite{Arr19}. This approach has the advantage that the learned set $\mathcal{M}_1$ might provide a better approximation of the underlying data stemming from a real-world process, rather than a potentially simplistic abstract mathematical modelling of the set $\mathcal{M}_1$. However, the challenge with these methods is that the learned mapping $\Psi$ tends to produce accurate reconstructions of all elements in $\mathcal{T}$, regardless of whether the noiseless problem is injective on $\mathcal{M}_1$ or not. This has made these methods susceptible to both hallucinations \cite{fastmri20, SIREV_paper} and instabilities \cite{PNAS_paper}. 

\noindent The purpose of this work is to develop a mathematical framework for inverse problems which provides a notion of optimality when $F$ is not necessarily injective on $\mathcal{M}_1$, and to provide a variational expression for the optimal mappings in this setting. In summary this work answers the question posed by M. Burger and T. Roith in \textit{Learning in Image Reconstruction: A Cautionary Tale} \cite{burger2024learning},\newline

    "\textit{Can we ever quantify the reconstruction error [of ill-posed inverse problems]?}"\newline

\noindent Our main contributions are the following: 
\begin{enumerate}[label=(\Roman*)]
    \item\label{it:opt_map} \emph{We provide a framework which allows for the classification of the lowest achievable reconstruction accuracy for \eqref{eq:sampling1}.} Note that many such frameworks exist in the literature. For example, Gelfand widths in the noiseless linear setting \cite{pinkus2012n}, best $k$-term approximation \cite{CoDaDe-08}, and to obtain the worst-case bounds set-valued decoders on Banach spaces  have been considered in \cite{arestov1986optimal} and then extended to metric spaces in \cite{magaril1991optimal}, and when noise is included one has the notion of optimal learning \cite{binev2022optimal} and generalized instance optimality \cite{fundamental14}. See also \cite{micchelli1977survey} for a survey of early works on optimal recovery. 
    In many of these works, the underlying spaces are infinite or finite dimensional Banach spaces, and the reconstruction error is measured by the given norm. A contribution of this work is to consider the general setting of metric spaces, to allow for set-valued decoders which naturally arise as solutions of undersampled inverse problems, and to extend previous studies on worst-case scenarios to the average error scenario.  
    \item \emph{We derive explicit formulas for set-valued reconstruction mappings which achieve the lowest possible reconstruction error, both in a worst-case sense and in a probabilistic sense.}
    Finding and analyzing reconstruction mappings which achieve optimal or near-optimal recovery -- given some notion of optimality -- is an essential question in inverse problems, and many mappings exist \cite{fundamental14, binev2022optimal, plaskota1996noisy, traonmilin2018stable, FoucartRauhutCSbook}. In this work, we provide explicit formulas for possibly set-valued reconstruction mappings that achieve the lowest possible reconstruction error. Moreover, a contribution of this work is to prove that the optimal mappings in the average error scenario are measurable, compact valued and admit a measurable single-valued selector by utilising the Measurable Maximum Theorem, \cite[Thm.\ 18.19]{guide2006infinite} and Hausdorff distance. As such, this work provides a first and necessary step, before any numerical procedure can provide statistical approximations to these mappings. 
    \item \emph{We provide lower and upper bounds on the lowest achievable reconstruction accuracy described in }\ref{it:opt_map}. The bounds solely depend on the forward operator, signal and noise class of the inverse problem, and not on the method or decoder used to solve \eqref{eq:sampling1}. Hence, the performance of any -- possibly set-valued -- method can be evaluated with respect to the problem's intrinsic optimal accuracy. However, obtaining lower bounds come at a price: in contrast to compressed sensing results, which typically provide upper bounds (see \cite{FoucartRauhutCSbook, fundamental14}), fewer assumptions are required and for a non-zero lower bound the reconstruction error can not vanish. Specifically, the trade-off is as follows: lower bounds can be derived without assuming that the forward model $F$ is injective on $\Mo$, at the cost that the reconstruction error admits a non-zero limit. A key point in order to obtain lower bounds on reconstruction accuracy is to consider worst-case and average errors instead of point-wise approximation errors. Such worst-case error bounds can be related to the Chebyshev radius, as in \cite{binev2022optimal}, and analogous worst-case and average error bounds are established in \cite{plaskota1996noisy} in the case of normed spaces and single-valued decoders.
\end{enumerate}

\begin{remark}
	Extending previous work from normed spaces to metric spaces is of particular relevance in imaging applications. In fact, the $\ell^2$ or $\ell^1$ norm are not the only candidates considered for image quality assessment. A prominent example is the structural similarity index (SSI), firstly proposed to assess image quality in \cite{wang2004image}, does not constitute a norm but can rather be related to a metric \cite{brunet2011mathematical}. Note that the proposed framework encompasses all metrics, except trivial degenerate metrics.
\end{remark}

\begin{remark}[{\bf Set-valued decoders, computability and randomised algorithms}] Many of the decoders provided in theory will not be computable, as the phenomenon of generalised hardness of approximation \cite{comp_stable_NN22, opt_big, gazdag2022generalised, AIM, paradox22} -- within the Solvability Complexity Index (SCI) hierarchy \cite{Hansen_JAMS, Ben_Artzi2022, SCI, Colbrook_2019, Matt1} -- happen in many inverse problems. This includes a wide range of neural network decoders, as well as compressed sensing decoders. Typically, they can only be computed to a certain accuracy $\epsilon_0 > 0$, referred to as the approximation threshold, and it is the size of the approximation threshold that determines if the decoder can be used in practice.  However, as demonstrated in \cite{opt_big}, when the decoder is set-valued, randomised algorithms may actually help mitigating the non-computability.  This is only possible for set-valued decoders, as single-valued decoders that are non-computable will have no help from randomised algorithms \cite{LeeuwMooreShannonShapiro, opt_big}.  
\end{remark}

\section{Preliminaries}

\subsection{The forward map and the model class}
In the following we introduce the main assumptions and setup.
Let $\mathcal{X}$, $\mathcal{Y}$ and $\mathcal{Z}$ be non-empty sets. Furthermore, consider subsets  
$\mathcal{M}_1 \subset \mathcal{X}$ and $\mathcal{E}\subset \mathcal{Z}$, referred to as the \emph{model class} and the \emph{noise class}, respectively. As outlined in the introduction, we consider inverse problems with a given forward map $F \colon \mathcal{M}_1\times \mathcal{E} \to \mathcal{M}_2^{\mathcal{E}} \subset \mathcal{Y}$, with 
\[
\mathcal{M}_2^{\mathcal{E}} = F(\mathcal{M}_1\times\E) = \{y \in \mathcal{Y}: \exists (x,e) \in \mathcal{M}_1\times \mathcal{E}\text{ s.t.\ } y=F(x,e) \} 
\] 
denote the image of $F\colon \X\times\Z\to\Y$ on the class $\Mo\times \E$. We consider inverse problems as: 
\begin{align}\label{eq:sampling1_bis}
    \text{Given noisy measurements } y = F(x,e) \text{ of } x \in \Mo\text{ and }  e \in \E, \text{ recover } x,
\end{align}
where $e$ represents noise and $x$ is the signal of interest. The goal of solving the inverse problem is to produce an approximation $\hat{x}$ of the true solution $x$.
In order to quantify the error of the approximation $\hat{x} \in \mathcal{X}$ to the sample signal $x\in \Mo$, it is common to assume \cite{pinkus2012n, CoDaDe-08, FoucartRauhutCSbook} that $\mathcal{X}$ is a normed space, or even a Banach or Hilbert space. In the current work, we extend the scope to the more general setting of metric spaces. The use of metrics -- rather than norms -- allows for a more general theory, in which normed vector spaces are a special case. Thus, we equip the sets $\mathcal{X}$, $\mathcal{Y}$ and $\mathcal{Z}$ with metrics $d_\mathcal{X}$, $d_\mathcal{Y}$ and $d_\mathcal{Z}$, respectively, and on each set we consider the topology induced by the respective metric. To avoid certain pathological cases for metric spaces, we make assumptions on the choice of metrics involved. Our first assumption is the following.

\begin{assumption}\label{a:sc} ~
\begin{enumerate}[label=(\roman*)]
       \item\label{it:as1} We assume that the metrics $d_\mathcal{X}$, $d_\mathcal{Y}$, $d_\mathcal{Z}$ are chosen so that the topologies induced by the metrics are second countable (they admit a countable base).
        \item\label{it:as2} 
    We assume that the metrics $d_\mathcal{X}$ and $d_\mathcal{Z}$ satisfy the Heine-Borel property, i.e., that all closed and bounded sets are compact. 
   \end{enumerate}
\end{assumption}
\noindent Here, the first assumption ensures that the topologies induced by the metrics are separable, which is a convenient assumption for both theoretical and practical purposes. In particular, this assumption excludes metrics such as the discrete metric over an uncountable set, as one cannot find a countable base for an uncountable set using this metric.

\noindent For the second assumption, we recall that in metric spaces, a compact subset is automatically closed and bounded; however, the converse does not necessarily hold, unless the metrics satisfy the Heine-Borel property. Note that this \ref{it:as2} is not implied by \ref{it:as1}, as for example the bounded metric $d_\X(x,y) = \min\{\|x-y\|_{\ell^2}, 1\}$ on $\mathcal{X} = \C^N$ satisfies assumption \ref{it:as1} (since $d_\X$ induces the same topology as the Euclidean topology, which is second countable), but not assumption \ref{it:as2} (since $\C^N$ is closed and bounded but not compact with respect to $d_\X$). 
Finally, note that together with second countability, the Heine-Borel property implies in particular that $\mathcal{X}$ is a complete separable metric space, usually referred to as a \textit{Polish space}.

\subsection{Set-valued reconstruction maps}

Now that the preliminaries have been clarified, we return to the problem of solving the inverse problem in \eqref{eq:sampling1_bis}. A \emph{reconstruction mapping} for \eqref{eq:sampling1_bis} is a mapping that takes a measurement $y=F(x,e)$ and returns one or more approximations $\hat{x}$ to the unknown signal $x$. That is, a \emph{reconstruction mapping} is a set-valued function $\varphi\colon \Mt^\E  \rightrightarrows \X$, where $\varphi$ takes values in the power set $2^{\X}$ of the set $\X$. Note that it is necessary to consider set-valued maps, as model-based methods often write the reconstruction mapping as an optimization problem. These problems might not have unique solutions, see e.g., \cite[SI, Sec. 3.B]{comp_stable_NN22} for a few standard examples. Finally, we mention that given a set-valued mapping $\varphi \colon \Mt^\E  \rightrightarrows \X$, we call a function $f \colon \Mt^\E  \to \X$ a \emph{selector} for $\varphi$ if it satisfies $f(y) \in \varphi(y)$ for each $y \in \Mt^\E$.

\noindent Distances between sets in $\X$ will be measured via the Hausdorff distance. Given subsets $A,B \subseteq \X$, their \textit{Hausdorff distance} is
\[
d_{\X}^H(A,B):= \max \Big\{\adjustlimits\sup_{a \in A} \inf_{b\in B} d_{\X}(a,b), \adjustlimits\sup_{b \in B} \inf_{a\in A} d_{\X}(a,b)\Big\}.
\]
The Hausdorff distance satisfies all the properties of a metric only when restricted to the subsets of $\X$ that are bounded (which ensures $d_\X^H$ is finite) and closed (which ensures that $d_\X^H(A,B)=0$ only if $A=B$). With a slight abuse of notation, we will denote $d_\X^H(\{a\},B)$ by $d_\X^H(a,B)$. The Hausdorff distance between a point and a set should not be confused with the usual distance between a point and a set, $\operatorname{dist}_\X(a,B):= \inf_{b\in B} d_\X(a,b)$. Indeed, note that
$\operatorname{dist}_\X(a,B)=\inf_{b\in B} d_\X(a,b)\leq \sup_{b\in B} d_\X(a,b)=d_\X^H(a,B)$.

\noindent We require an extra assumption that relates the forward map $F$ and the model class $\Mo$. In particular, we require that for every $y$, the set of possible true solutions $x$'s is bounded. Throughout the text, we denote by $\pi_1\colon \mathcal{X} \times \mathcal{Z} \to \mathcal{X}, (x,e) \mapsto x$ the projection on the first component.
\begin{assumption}\label{a:compact}
	We assume that  for every $y \in \Mt^\E$ the \emph{feasible set}
		\begin{align}
		F_y:=\pi_1(F^{-1}(y)) = \{x \in \Mo: \exists e\in\E \text{ s.t. } F(x,e)=y\}
		\end{align}
		is bounded.
\end{assumption}

\noindent The feasible set $F_y$ consists of all the candidate solutions $x$'s that are consistent with the measurement $y$, for some realisation of the noise $e$. Note that the definition of $\Mt^\E$ implies that the feasible set is non-empty. Additionally, in Remark \ref{rem:examplecases} we outline assumptions on the forward map, such that the feasible set contains more than one element.\newline

\noindent The previous condition is satisfied, for example, if the model class $\Mo$ is compact, as assumed in e.g., \cite{binev2022optimal}.

\noindent Finally, we clarify our notation for objects in a metric space $(X,d)$. We denote by $B_d(x,r)$ the \textit{closed} ball of center $x\in X$ and radius $r\geq 0$. If $A\subseteq X$, then the closed ball around $A$ of radius $r\geq 0$ is 
\begin{align*}
B_d(A,r) = \{x \in X: \operatorname{dist}_d(x,A) \leq r\} = \bigcup_{x \in A} B_d(x,r)
\end{align*}
where we recall that $\operatorname{dist}_d(x,A) = \inf_{a \in A} d(x,a)$. 

\noindent Finally, the diameter of $A\subseteq X$ is denoted by $\diam_d(A):=\sup\{d(a,a'):a,a'\in A\}$. The subscript $d$ is omitted whenever the metric is unambiguous from context.

\section{Main results}\label{sec:mainres}

This section introduces the inverse problem framework studied in this paper, highlighting the key concepts of an \emph{optimal mapping} and the \emph{kernel size}, which serve as bounds on the reconstruction error. The section is divided into two subsections that both introduce the definitions of an \emph{optimal mapping} and the \emph{kernel size}, but for different measures of accuracy. In particular, $\S$ \ref{ss:wc} presents an analysis based on the \textit{worst-case} error, whereas $\S$ \ref{ss:average} considers a probabilistic model class, where an \textit{average} error is analyzed. The main results, given in Theorems~\ref{thm:optimal_bounds_sup} and \ref{thm:optimal_bounds} for $\S$~\ref{ss:wc} and $\S$ \ref{ss:average}, respectively, bound the \emph{optimality constant} of an optimal mapping in terms of the \emph{kernel sizes} and provide an explicit construction of an optimal reconstruction mapping. Finally, in $\S$ \ref{s:examples} we elaborate on how these quantities relate to common frameworks in the literature. The proofs of the main results are referred to $\S$ \ref{sec:proofmeth}.

\subsection{Worst-case optimality bounds and an optimal map with worst-case noise}\label{ss:wc}
What is the best possible reconstruction error one can achieve for a given model class $\mathcal{M}_1$? This is an old question \cite{micchelli1977survey}, that has been asked many times, and in different settings \cite{CoDaDe-08, fundamental14, binev2022optimal, traonmilin2018stable, keriven2018instance, traonmilin2021theory, ettehad2021instances, foucart2022learning}. In the definition below, we investigate this question for measurements contaminated by worst-case noise. This definition has appeared in \cite{ettehad2021instances, foucart2022learning} for the linear model with additive noise in a normed vector space, and is here generalized to metric spaces. 

\begin{definition}[Optimality constant with worst-case noise]\label{eq:mapconstn3}
The \textit{optimality constant with worst-case noise} of the inverse problem \eqref{eq:sampling1} is 
		\begin{align*}
            c_{\mathrm{opt}}^{\text{w}}(F,\Mo, \E) & = \inf_{\varphi\colon \Mt^{\E} \rightrightarrows \X} \sup_{x\in \Mo} \sup_{e \in \E} \ d_\X^H(x, \varphi(F(x,e))). 
		\end{align*}
		A mapping $\varphi\colon \Mt^{\E} \rightrightarrows \X$ that attains such an infimum is called an \textit{optimal map with worst-case noise}.
\end{definition}

\noindent Given the optimality constant with worst-case noise of an inverse problem \eqref{eq:sampling1}, the question arises on how to upper and lower bound such optimality constant. As shown in Theorem \ref{thm:optimal_bounds_sup}, the upper and lower bounds can be obtained by referring to a constant intrinsic to the problem, its \textit{kernel size with worst-case noise}, as defined below.

\begin{definition}[Kernel size with worst-case noise]\label{def:best_worst_case_rec_error}
	The \textit{kernel size with worst-case noise} of the problem \eqref{eq:sampling1} is
        \begin{align}\label{eq:kersize_det}
		\operatorname{kersize}^{\text{w}}(F,\Mo, \E) &= \sup_{\substack{ (x,e),(x',e')\in\Mo\times\E \text{ s.t. }\\ F(x,e) = F(x',e')} } d_\X(x,x').
		\end{align}
\end{definition}

\begin{remark}
	The kernel size with worst-case noise has been considered under different names in previous work. This includes, but is not limited to, the supremum taken over the measurements $y$ of diameter of the Chebyshev balls of the feasible sets $F_y$, similar to \cite{binev2022optimal}, or the diameter of information in \cite{plaskota1996noisy}. However, as this work focuses on characterizing accuracy bounds for undersampled and ill-posed inverse problems, we opt for referring to the above defined constant as kernel size. In Example 1 in $\S$ \ref{s:examples}, more details on this relation are presented. 
\end{remark}

\noindent The kernel size with worst-case noise gives the maximum distance between any two points $x,x'\in \Mo$ with identical measurement $y = F(x,e) = F(x',e')$ for some noise vectors $e,e'\in \E$. 
It is worth observing that the optimality constant and the kernel size with worst-case noise also can be expressed as follows.
\begin{align}\label{eq:kersizefeas}
c_{\mathrm{opt}}^{\text{w}}(F,\Mo, \E) & = \inf_{\varphi\colon \Mt^{\E} \rightrightarrows \X} \sup_{y\in \Mt^\E} \sup_{x \in F_y} \ d_\X^H(x, \varphi(y)) \nonumber \\
\text{kersize}^{\text{w}}(F,\Mo, \E) &
=\sup_{y \in \mathcal{M}_{2}^{\mathcal{E}}} \mathrm{diam}_{d_\X}(F_y).
\end{align}
This establishes a connection of the optimality constant and the kernel size with worst-case to the feasible sets $F_y$. In our first theorem, we provide an upper and lower bound on the optimality constant in terms of the kernel size with worst-case noise. Moreover, we provide a variational expression for an optimal map with worst-case noise.

\begin{theorem}[Worst-case optimality bounds]\label{thm:optimal_bounds_sup}
   Under Assumptions \ref{a:sc} and \ref{a:compact}, the following holds. 
    \begin{enumerate}[label=(\roman*)]
		\item We have that
        \begin{equation}\label{eq:bounds_kersize}
		\text{kersize}^{\text{w}}(F,\Mo, \E)/2 \leq c_{\text{opt}}^{\text{w}}(F,\Mo, \E) \leq \text{kersize}^{\text{w}}(F,\Mo, \E).
		\end{equation}
		\item The map
        \begin{equation}\label{eq:opt_map_det_ex}
		\Psi(y) = \argmin_{z \in \X}\sup_{x \in F_y} d_\X(x,z) = \argmin_{z \in \X} d_\X^H(z, 
		F_y),
		\end{equation}
has non-empty, compact values and it is an optimal map with worst-case noise.\\ 
	\end{enumerate}
\end{theorem}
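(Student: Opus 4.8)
The plan is to reduce the entire statement to a single per-measurement quantity, the Chebyshev radius $r(y):=\inf_{z\in\X} d_\X^H(z,F_y)$ of the feasible set. Set $g_y(z):=d_\X^H(z,F_y)=\sup_{x\in F_y} d_\X(z,x)$. Since the Hausdorff distance from a point $x$ to a set $A$ equals $\sup_{z\in A} d_\X(x,z)$ (as noted in the Preliminaries), for any reconstruction map $\varphi$ with $\varphi(y)\neq\emptyset$ one may exchange the order of the two suprema:
\[
\sup_{x\in F_y} d_\X^H(x,\varphi(y))=\sup_{x\in F_y}\sup_{z\in\varphi(y)} d_\X(x,z)=\sup_{z\in\varphi(y)} g_y(z)\ge r(y),
\]
while for $\varphi(y)=\emptyset$ the left-hand side is $+\infty$, so such maps may be discarded. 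Using the equivalent expression for $c_{\mathrm{opt}}^\text{w}$ recorded after Definition~\ref{def:best_worst_case_rec_error}, taking the supremum over $y\in\Mt^\E$ and then the infimum over $\varphi$ gives $c_{\mathrm{opt}}^\text{w}(F,\Mo,\E)\ge\sup_{y\in\Mt^\E} r(y)$. The matching upper bound will be delivered by $\Psi$ itself, so I would handle parts (i) and (ii) simultaneously via the identity $c_{\mathrm{opt}}^\text{w}=\sup_y r(y)$.

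Next I would prove the structural claim in (ii): fix $y\in\Mt^\E$. Because $F_y$ is nonempty and bounded (Assumption~\ref{a:compact}), $g_y$ is finite-valued and $1$-Lipschitz, hence continuous on $\X$. Fix $x_0\in F_y$; for any $\epsilon>0$ the sublevel set $S_\epsilon:=\{z\in\X:g_y(z)\le r(y)+\epsilon\}$ is closed, and since $d_\X(z,x_0)\le g_y(z)$ it lies inside the ball $B_{d_\X}(x_0,r(y)+\epsilon)$; being closed and bounded it is compact by the Heine-Borel property (part~\ref{it:as2} of Assumption~\ref{a:sc}). A continuous function attains its minimum on the nonempty compact set $S_\epsilon$, and since $g_y>r(y)+\epsilon$ outside $S_\epsilon$ this minimum must equal $r(y)$; hence $\Psi(y)=\argmin_{z\in\X} g_y(z)=g_y^{-1}(\{r(y)\})$ is nonempty, and as a closed subset of the compact set $S_\epsilon$ it is compact. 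In particular $g_y\equiv r(y)$ on $\Psi(y)$, so the exchange identity above yields $\sup_{x\in F_y} d_\X^H(x,\Psi(y))=r(y)$; taking the supremum over $y$ gives $c_{\mathrm{opt}}^\text{w}\le\sup_y r(y)$, which combined with the earlier lower bound proves $c_{\mathrm{opt}}^\text{w}=\sup_y r(y)$ and certifies $\Psi$ as an optimal map with worst-case noise.

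Part (i) then follows from the elementary two-sided comparison — valid in any metric space — between the Chebyshev radius and the diameter of a nonempty bounded set $A\subseteq\X$: centring at any point $a_0\in A$ gives $\inf_{z\in\X}\sup_{a\in A} d_\X(z,a)\le\diam_{d_\X}(A)$, while the triangle inequality $d_\X(x,x')\le 2\sup_{a\in A} d_\X(z,a)$ for all $x,x'\in A$ and $z\in\X$ forces $\inf_{z\in\X}\sup_{a\in A} d_\X(z,a)\ge\diam_{d_\X}(A)/2$. Applying this with $A=F_y$, taking the supremum over $y\in\Mt^\E$, and invoking $c_{\mathrm{opt}}^\text{w}=\sup_y r(y)$ together with the stated identity $\operatorname{kersize}^\text{w}(F,\Mo,\E)=\sup_y\diam_{d_\X}(F_y)$, yields $\operatorname{kersize}^\text{w}/2\le c_{\mathrm{opt}}^\text{w}\le\operatorname{kersize}^\text{w}$. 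The step I expect to be most delicate is the compactness of $\Psi(y)$: one must not assume that $F_y$ is closed (it need not be), and must place $\argmin_{z\in\X} g_y(z)$ inside a genuinely compact set rather than a merely closed-and-bounded one — which is exactly where the Heine-Borel assumption is used — while the degenerate case $\operatorname{kersize}^\text{w}=+\infty$ requires only the remark that all the inequalities above are read in $[0,\infty]$.
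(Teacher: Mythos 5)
Your proposal is correct and follows essentially the same route as the paper's proof: localize the minimization of $d_\X^H(\cdot,F_y)$ to a compact set via the Heine--Borel property and the $1$-Lipschitz continuity of $z\mapsto \sup_{x\in F_y}d_\X(x,z)$, compare $\Psi$ against an arbitrary decoder by exchanging suprema, and obtain the factor $2$ from the triangle inequality. Your only (cosmetic) deviations are that you route both parts through the per-measurement Chebyshev radius $r(y)$ and localize via an $\epsilon$-sublevel set, whereas the paper localizes to the ball $B(x,\diam_{d_\X}(F_y))$ around a feasible point and compares $\Psi$ with arbitrary $\varphi$ directly.
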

\noindent The previous theorem illustrates a fundamental limit for the inverse problem \eqref{eq:sampling1_bis}. Indeed, for \eqref{eq:sampling1_bis} one would hope to find a solution whose error is as close to zero as possible. However, the lower bound in \eqref{eq:bounds_kersize} shows that there is a fundamental constant intrinsic to the problem -- the kernel size with worst-case noise -- such that no worst-case reconstruction error can be made smaller than this constant for all possible choices of $x \in \mathcal{M}_1$. Moreover, note that in the definition of the worst-case optimality constant all potentially set-valued maps are included - in particular, no single-valued selector can obtain a smaller worst-case reconstruction error. Additionally, we can apply the axiom of choice to select a single valued map $\bar{\Psi}$ from the set-valued optimal map in \eqref{eq:opt_map_det_ex}. By the definition of the Hausdorff distance we have that this single-valued selection satisfies $d_{\mathcal{X}}(x,\bar{\Psi}(F(x,e))) \leq \sup_{z \in \Psi(F(x,e))}d_{\mathcal{X}}(x,z)=d_{\mathcal{X}}^H(x,\Psi(F(x,e)))$ for all $x \in \mathcal{M}_1$ and $e \in \mathcal{E}$. Hence, by the minimization property of the optimal map in \eqref{eq:opt_map_det_ex}, the bounds also hold for any single-valued selector.

\begin{remark}
Note that analogous bounds as in \eqref{eq:bounds_kersize} under slightly different assumptions have been obtained in a variety of previous works, including but not limited to \cite{plaskota1996noisy, magaril1991optimal, arestov1986optimal, binev2022optimal}.  However, to the best of our knowledge the characterisation of the set-valued optimal map \eqref{eq:opt_map_det_ex} as compact-valued has not been considered under the same assumptions.
\end{remark}

\subsection{Average optimality bounds and optimal map with average error}
\label{ss:average}
The previous theorem provides fundamental limits when performance is assessed by considering worst-case reconstruction error for a given inverse problem of the form \eqref{eq:sampling1_bis}. Such bounds naturally lead to very pessimistic estimates of performance. This motivates the need for a model that considers the average error of a reconstruction mapping, given a probabilistic model of the data. Below, we will adapt the worst-case setup considered in the previous section to a probabilistic model. Note that in the machine learning literature, this notion of average error is often referred to as the \emph{risk} of a given reconstruction mapping \cite[Ch.\ 3]{shalev2014understanding}\cite[Ch.\ 1]{foucart2022mathematical}.

\noindent We start by introducing the notation. For a topological space $(X,\tau)$, the \emph{Borel $\sigma$-algebra} of $X$, denoted by $\mathcal{B}(X)$, is the $\sigma$-algebra generated by the family $\tau$ of open sets. Elements of $\mathcal{B}(X)$ are called \emph{Borel sets}. If $X$ is a metric space, then $\mathcal{B}(X)$ is the smallest family of sets containing all the open sets that are closed under countable intersections and countable disjoint unions. See, e.g., \cite[Cor.\ 4.16]{guide2006infinite} for further reference.  A \textit{Borel measure} on $X$ is a measure defined on the Borel $\sigma$-algebra $\mathcal{B}(X)$. Borel measurable functions are defined analogously.

\begin{assumption}\label{a:measur}
	We consider the measurable spaces $(\X\times\Z, \mathcal{B}(\X\times\Z))$, $(\Mo\times\E, \mathcal{B}(\Mo\times\E))$, $(\Mt^\E,\mathcal{B}(\Mt^\E))$ and assume that $F\colon \Mo\times\E \to \Mt^\E$ is Borel measurable. We equip  $\X\times\Z$ with a finite Borel measure $\mu$ supported on $\Mo\times\E$, such that $\mu(\Mo\times\E)>0$. We equip $\Mt^\E$ with the pushforward measure $F_*\mu$ given by $(F_*\mu)(E) = \mu(F^{-1}(E))$ for every $E \in \mathcal{B}(\Mt^\E)$. 
\end{assumption}

\noindent Next, we need to define conditional probabilities on $\mathcal{M}_1\times \mathcal{E}$ for the considered measure $\mu$ for different values of $y \in \mathcal{M}_2^{\mathcal{E}}$. To compute such conditional probabilities, we consider a \emph{disintegration} of the measure $\mu$.  
Intuitively, a disintegration of the measure $\mu$ given the mapping $F$, is a family of probability measures $\{\mu^y\}_{y\in \mathcal{M}_2^{\E}}$ such that for each $y\in \mathcal{M}_2^{\E}$, $\mu^y$ is concentrated on the set $F^{-1}(y)$, and which allows us to reconstruct the original measure $\mu$ by integration. Note that these sets $\{F^{-1}(y)\}_{y\in \mathcal{M}_2^{\E}}$ might have measure zero with respect to $\mu$. The concept of disintegration is introduced in the following definition, which is taken from \cite{chang1997conditioning} and adapted to our Assumption \ref{a:measur}. See also \cite[Ch.\ 10]{bogachev2007measurev1} or \cite[Ch.\ 1]{kallenberg1997foundations} for more on disintegration. Moreover, note that in \cite{plaskota1996noisy} a similar tool -- that of a \textit{regular conditional probability density} -- is used in order to prove analogous bounds when restricted to normed spaces and single-valued decoders. 

\begin{definition}\label{def:disintegration}
	A \emph{disintegration} of the measure $\mu$ along the measurable function $F: \mathcal{M}_1\times\mathcal{E} \rightarrow \Mt^\E$ is a family $\{\mu^y\}_{y \in \Mt^\E}$ of probability measures on the measurable space $(\Mo \times \E, \mathcal{B}(\mathcal{M}_1\times\mathcal{E}))$ such that
	\begin{enumerate}[label=(\roman*)]
		\item for $F_*\mu$-almost every $y \in \Mt^{\E}$, $\mu^y$ is a probability measure concentrated on $F^{-1}(y)$, i.e., $\mu^{y}(\mathcal{M}_1\times \mathcal{E}\setminus F^{-1}(y)) = 0$ for $F_*\mu$-almost all $y \in \Mt^{\E}$.
	\end{enumerate} 
	and such that, for each non-negative Borel measurable function $f$ on $(\Mo \times \E, \mathcal{B}(\mathcal{M}_1\times\mathcal{E}))$,
	\begin{enumerate}[label=(\roman*)]
		\setcounter{enumi}{1}
		
		\item the function \[
		y \mapsto \int_{\mathcal{M}_1\times \mathcal{E}} f(x,e) \, d\mu^y(x,e), \quad \text{with}\quad y \in \Mt^\E,\]
		is Borel measurable,
		\item and 
		\begin{equation*}
		\int_{\Mo\times \E} f(x,e) \, d \mu (x,e) = \int_{\Mt^{\E}} \Bigg(\int_{F^{-1}(y)} f(x,e) \, d \mu^y(x,e) \Bigg)\ d(F_*\mu)(y).
		\end{equation*}
	\end{enumerate} 
\end{definition}

\begin{remark}[Example of a disintegration along a projection]
Let $\mu$ be Lebesgue measure on $\mathbb{R}^2$ and let $\pi_1(x,y)=x$ be the projection onto the first coordinate. Then, $\mu$ admits a disintegration with respect to $\pi_1$ given by the family of measures $\{\mu^x\}_{x \in \mathbb{R}}$, where $\mu^x$ is Lebesgue measure on the fiber $\{x\} \times \mathbb{R}$. In this case, the disintegration formula
\[
\int_{\mathbb{R}^2} f(x,y)\, d\mu(x,y)
= \int_{\mathbb{R}} \left( \int_{\mathbb{R}} f(x,y)\, d\mu^x(y) \right) d\pi_{1*}\mu(x,y)
\]
for a non-negative, measurable and integrable function $f$ is the Fubini-Tonelli's theorem (see \cite[Theorem~2.37]{folland1999real}).
\end{remark}

\noindent Note that while we assume that $\mu$ is a finite measure (and not necessarily a probability measure), the definition above requires that $\mu^{y}$'s are probability measures. In $\S$ \ref{ss:existence} we show that a disintegration of $\mu$ always exists and that it is essentially unique in our setting given Assumption \ref{a:Mcompact}, stated below.

\begin{assumption}\label{a:Mcompact}
	We assume that $\Mo$ is compact and we assume that $\E$ is a Borel set.
\end{assumption}
\noindent Note that Assumption \ref{a:Mcompact} implies Assumption \ref{a:compact}, which will therefore be omitted from now on. Additionally, since $\mathcal{M}_1 \times \mathcal{E}$ is a Borel subset of $\mathcal{X} \times \mathcal{Z}$, the Borel $\sigma$-algebra $\mathcal{B}(\mathcal{M}_1\times\mathcal{E})$, that is induced by the subspace topology on $\mathcal{M}_1 \times \mathcal{E}$, coincides with the trace $\sigma$-algebra
$\mathcal{B}(\mathcal{X}\times\mathcal{Z})_{|\mathcal{M}_1\times\mathcal{E}}
=\{ B \cap (\mathcal{M}_1 \times \mathcal{E}) : B \in \mathcal{B}(\mathcal{X} \times \mathcal{Z}) \},$ see e.g., \cite[Section 1.1]{bogachev2007measurev1}.

\begin{remark}\label{rem:examplecases}
	As an example for an inverse problem as in \eqref{eq:sampling1_bis} satisfying the Assumptions \ref{a:compact} and \ref{a:Mcompact}, we consider a linear forward model with additive noise $F(x,e)=Ax+e$ for $x \in \mathcal{M}_1 \subset \mathcal{X}= \mathbb{C}^N$ and $e \in \mathcal{E} \subset \mathcal{Z}= \mathbb{C}^m$ with $m,N \in \mathbb{N}$ and $A \in \mathbb{C}^{m\times N}$. For many relevant applications the linear map $A \in \mathbb{C}^{m\times N}$ has a non-trivial null space $\mathcal{N}(A):=\{x \in \mathbb{C}^N: Ax=0\} \neq \{0\}$ or is ill-conditioned. The linear map typically has a non-trivial null space for undersampled inverse problems. In the case that $A$ is ill-conditioned, numerically the condition $\mathcal{N}(A) \neq \{0\}$ can in principle be approximated by setting the singular values around or below machine precision to zero, which results in $\mathcal{N}(A) \neq \{0\}$. If the null space is non-trivial, the feasible sets $F_y$ can be non-empty. As a subset of the bounded set $\mathcal{M}_1$, the feasible sets are bounded and by the Heine-Borel property of $d_{\mathcal{X}}$, in Assumption \ref{a:sc}, are also compact. In this case the feasible sets satisfy Assumption \ref{a:compact}. Thus, the kernel size with worst-case noise can be non-zero. Assumption \ref{a:Mcompact} is likely to be satisfied in applications. The condition that data sets or model classes, $\mathcal{M}_1 \subseteq \mathbb{C}^N$, are bounded, ensures that the set of admissible signals remains within physically or numerically reasonable limits. Additionally, if $\mathcal{M}_1$ is considered to be data, it can be assumed to be closed, since they include boundary points as part of the domain. Thus, it is natural to treat  $\mathcal{M}_1$ as closed, since excluding boundary points would not align with how physical data is collected or interpreted. Finally, many applications involve quantities that vary continuously (or piecewise continuously) with respect to their underlying parameters. This continuity can imply that boundary points do not represent any abrupt change in the physical state and in such cases can be seamlessly included in  $\mathcal{M}_1$. This supports modelling $\mathcal{M}_1$ as a closed set, as no artificial exclusion of boundary points is required. Hence, assuming that $\mathcal{M}_1$ is closed and bounded is consistent with typical modelling assumptions. By the Heine-Borel property of $d_{\mathcal{X}}$, from Assumption \ref{a:sc}, the set $\mathcal{M}_1$ is compact, as it is closed and bounded, satisfying Assumption \ref{a:Mcompact}. In probabilistic settings, the set of noise vectors $\mathcal{E}$ is assumed to be Borel measurable, satisfying Assumption \ref{a:Mcompact}. 
\end{remark}

\noindent Next, let 
\begin{equation*}
r_\varphi: \Mo \times \E \to [0,+\infty], \quad r_\varphi(x,e) = d_\X^H(x,\varphi(F(x,e)))
\end{equation*}
denote the \textit{residual map} of a given reconstruction mapping $\varphi \colon \Mt^\E \rightrightarrows \X$ and let 
	\begin{equation}\label{eq:C_def}
		\mathcal{C}\coloneqq  \{\varphi\colon \mathcal{M}_2^\E \rightrightarrows \X: r_\varphi \text{ is Borel measurable}\}.
	\end{equation}
denote the set of all reconstruction mappings with a Borel measurable residual map. 
For $p \in [1,\infty]$ and a given a reconstruction map $\varphi \colon \Mt^\E \rightrightarrows \X$, we denote its $p$th order error by 
\begin{equation}\label{eq:error_p}
\mathrm{Err}^{\mathrm{a}}(\varphi,p) = \begin{cases} 
\Bigg(\int_{\Mo\times \E} d_\X^H\Big(x,\varphi(F(x,e))\Big)^p \ d\mu(x,e) \Bigg)^\frac{1}{p} &\text{for } 1\leq p < \infty \\
\essup_{(x,e) \in \Mo\times\E} \ d_\X^H(x, \varphi(F(x,e))) &\text{for } p=\infty
\end{cases}
\end{equation}
where the essential supremum for $p=\infty$ is taken with respect to the measure $\mu$. Concretely, for $p\in [1,\infty]$ this quantity is the $L^p(\mathcal{M}_1\times\mathcal{E}, \mu)$ norm of the residual function $r_\varphi$.

\noindent This definition of reconstruction error generalizes the usual $L^p$-norms to include a set-valued mapping and a metric $d_{\mathcal{X}}$ that is not necessarily induced by a norm in the single-valued case. However, the proposed setup includes many standard expectations as special cases. Indeed, let $\varphi$ be a single-valued map and let $d_{\mathcal{X}}$ be given by an $\ell^q$ norm. Then, if we choose $p=q= 1$, we recover the expected absolute error, whereas for $p=q=2$ we find the expected root squared error. Moreover, for $p=\infty$, the choice of metric $d_{\mathcal{X}}$ corresponds to the choice of the loss function in a machine learning setting \cite[Ch.\ 3]{shalev2014understanding}\cite[Ch.\ 1]{foucart2022mathematical}. One can also view the case where $p=\infty$ as a weak form of worst-case error where all irregularities on sets of measure zero are ignored.  

\noindent With the error term in \eqref{eq:error_p} defined, we can now establish the notion of optimality constant with average error and kernel constant with average error. 
\begin{definition}[Optimality constant with average error]\label{def:optimality_constant}
	For $p \in [1,\infty]$ the \textit{optimality constant with average error} of order $p$ for the inverse problem \eqref{eq:sampling1_bis} is 
	\begin{align}\label{eq:optconst}
		c_{\mathrm{opt}}^{\text{a}}(F,\Mo,\E,  p) = \inf_{\varphi \in \mathcal{C}} \mathrm{Err}^{\mathrm{a}}(\varphi, p). 
		\end{align}
A map $\varphi \in \mathcal{C}$ attaining the infimum in \eqref{eq:optconst} for a given $p$ is called an \textit{optimal map with average error} of order $p$.
\end{definition}

\begin{definition}[Average kernel size]\label{def:kersize}
The average kernel size of the inverse problem \eqref{eq:sampling1} for $p \in [1,\infty)$ is given by
	\begin{align*}
		\operatorname{kersize}^{\text{a}}(F,\Mo, \E, p) &= \Bigg( \int_{\Mt^{\E}} \int_{F^{-1}(y)}  \int_{F^{-1}(y)} d_\X(x,x')^p \, d\mu^y(x,e) \ d\mu^y(x',e') \ d(F_*\mu)(y) \Bigg)^\frac{1}{p},
	\end{align*}
	and for $p = \infty$ it is
    \begin{align}\label{eq:kersize_pinf}
		\operatorname{kersize}^{\text{a}}(F,\Mo, \E, \infty) &= \essup_{y \in \Mt^{\E}} \essup_{\substack{(x,e) \in F^{-1}(y)\\ (x',e') \in F^{-1}(y)}} d_\X(x,x'),
	\end{align}
	where the left and right essential suprema are taken with respect to $F_*\mu$ on $\Mt^\E$ and $\mu^y$ on $F^{-1}(y)$, respectively.
\end{definition}

\noindent Intuitively, the average kernel size measures the average distance between the $x$-components of  $(x,e), (x',e') \in\Mo\times \E$ that have the same measurement $F(x,e) = F(x',e')=y$ in the case of $p \in [1,\infty)$, and the maximum of such distances (up to negligible sets) when $p = \infty$.

\noindent Our main theorem in this section is stated below. It bounds the average optimality constant in terms of the average kernel size. It also ensures the existence of an optimal map, provides a variational expression for one such map and establishes some regularity properties of this mapping. 

\begin{theorem}[Average optimality bounds]\label{thm:optimal_bounds}
	Under Assumptions \ref{a:sc}, \ref{a:measur}, \ref{a:Mcompact}, the following holds for every $p \in [1,\infty]$,
	\begin{enumerate}[label=(\roman*)]
		\item We have that
			\begin{equation*}
			\operatorname{kersize}^{\text{a}}(F,\Mo, \E, p)/2 \leq c_{\mathrm{opt}}^{\text{a}}(F,\Mo,  \E,  p)  \leq \operatorname{kersize}^{\text{a}}(F,\Mo, \E, p).
		\end{equation*}
	
	\item Consider $p \in [1,\infty)$, and let $\Psi\colon \mathcal{M}_2^\E\rightrightarrows \X$ be given by 
    \begin{align}\label{eq:optimal_map_inspiration12}
		\Psi(y) &= \argmin_{z \in \X} \int_{F^{-1}(y)} d_\X(x,z)^p \ d\mu^y(x,e).
	\end{align}
	Then we have the following: $\Psi$ has compact values, it is measurable and it admits a measurable selector. Moreover, $\Psi$ is an optimal map with average error of order $p$.
	\item Consider $p = \infty$, and let $\Psi\colon \mathcal{M}_2^\E\rightrightarrows \X$ be given by 
	
		\begin{align}\label{eq:optimal_map_inspiration1}
		\Psi(y) &= \argmin_{z \in \X}\essup_{(x,e) \in F^{-1}(y)} d_\X(x,z). 
	\end{align}
	Then we have the following: $\Psi$ has compact values, it is measurable and it admits a measurable selector. Moreover, $\Psi$ is an optimal map with average error of order $p$.
	\end{enumerate}
\end{theorem}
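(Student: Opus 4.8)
The plan is to treat the three items in the order: first the lower bound in (i) for all $p$, then the construction and the regularity properties (compact values, measurability, measurable selector) of the map $\Psi$ in (ii)--(iii), and finally the optimality statements in (ii)--(iii) together with the upper bound in (i). Throughout I will abbreviate $\nu^y:=(\pi_1)_*\mu^y$, the $\X$-marginal of the disintegration, a Borel probability measure concentrated on $F_y$, and I will use freely the existence and essential uniqueness of the disintegration $\{\mu^y\}$ proved in Section~\ref{ss:existence}, as well as Assumption~\ref{a:Mcompact} (which subsumes Assumption~\ref{a:compact}, so $F_y$ is bounded).

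For the lower bound the engine is the triangle inequality: if $F(x,e)=F(x',e')=y$ and $\varphi(y)\neq\emptyset$, then for every $z\in\varphi(y)$,
\begin{equation*}
d_\X(x,x')\leq d_\X(x,z)+d_\X(z,x')\leq d_\X^H(x,\varphi(y))+d_\X^H(x',\varphi(y))=r_\varphi(x,e)+r_\varphi(x',e').
\end{equation*}
For $p\in[1,\infty)$ I would raise this to the $p$-th power, apply $(a+b)^p\leq 2^{p-1}(a^p+b^p)$, integrate against $d\mu^y(x,e)\,d\mu^y(x',e')$ over $F^{-1}(y)^2$ (each $\mu^y$ being a probability measure, so the spectator variable integrates out), and then against $d(F_*\mu)(y)$; by Definition~\ref{def:disintegration}(iii) applied to $r_\varphi^p$ the right-hand side collapses to $2^p\,\mathrm{Err}^{\mathrm a}(\varphi,p)^p$, giving $\operatorname{kersize}^\text{a}(F,\Mo,\E,p)\leq 2\,\mathrm{Err}^{\mathrm a}(\varphi,p)$. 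For $p=\infty$ I would instead take essential suprema, using the disintegration identity $\essup_\mu g=\essup_{y}\essup_{\mu^y}g$ for Borel $g\geq 0$ (a routine consequence of Definition~\ref{def:disintegration}(iii) applied to $\mathbbm 1\{g>t\}$) together with $\essup(g_1+g_2)\leq\essup g_1+\essup g_2$, reaching $\operatorname{kersize}^\text{a}(F,\Mo,\E,\infty)\leq 2\,\mathrm{Err}^{\mathrm a}(\varphi,\infty)$. Taking the infimum over $\varphi\in\mathcal C$ gives $\operatorname{kersize}^\text{a}/2\leq c_{\mathrm{opt}}^\text{a}$ in all cases.

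Next I would set up $\Psi$. Write $h(y,z):=\int_{F_y}d_\X(x,z)^p\,d\mu^y(x,e)$ when $p<\infty$ and $h(y,z):=\essup_{(x,e)\in F_y}d_\X(x,z)$ when $p=\infty$, so that $\Psi(y)=\argmin_{z\in\X}h(y,z)$. First I would verify that $h$ is a Carathéodory function on $\Mt^\E\times\X$: continuity in $z$ follows for $p<\infty$ from dominated convergence (the fibres $F_y\subseteq\Mo$ lie in a fixed bounded set), and for $p=\infty$ from $1$-Lipschitzness; measurability in $y$ for fixed $z$ is precisely Definition~\ref{def:disintegration}(ii) with $f=d_\X(\cdot,z)^p$ when $p<\infty$, while for $p=\infty$ one writes $h(y,z)=\inf\{t\geq 0:\nu^y(\{d_\X(\cdot,z)>t\})=0\}$ and applies Definition~\ref{def:disintegration}(ii) to $f=\mathbbm 1\{d_\X(\cdot,z)>t\}$. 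Since $F_y\subseteq\Mo$ with $\Mo$ compact, every minimiser of $h(y,\cdot)$ lies in the fixed compact set $K:=B_{d_\X}(\Mo,\diam_{d_\X}(\Mo))$ (which is compact by Assumption~\ref{a:sc}(ii)), so $\Psi(y)=\argmin_{z\in K}h(y,z)$ is nonempty and compact. I would then invoke the Measurable Maximum Theorem \cite[Thm.~18.19]{guide2006infinite}, applied to the Carathéodory function $-h$ and the constant (hence measurable) compact-valued correspondence $y\mapsto K$, to conclude that $\Psi$ is a measurable, compact-valued correspondence admitting a Borel measurable selector $g$.

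Finally, for optimality and the upper bound, the decisive step is a pointwise-in-$y$ comparison. Fix $\varphi\in\mathcal C$; one may assume $\varphi(y)\neq\emptyset$ for $F_*\mu$-a.e.\ $y$ (else $\mathrm{Err}^{\mathrm a}(\varphi,p)=\infty$). For each $z\in\varphi(y)$ one has $d_\X^H(x,\varphi(y))\geq d_\X(x,z)$ for all $x$, so taking $L^p(\mu^y)$-norms (resp.\ $\mu^y$-essential suprema) in $(x,e)\in F^{-1}(y)$ and then $\inf$ over $z$ gives, for $F_*\mu$-a.e.\ $y$,
\begin{equation*}
\int_{F^{-1}(y)}d_\X^H(x,\varphi(y))^p\,d\mu^y\ \geq\ \inf_{z\in\X}\int_{F^{-1}(y)}d_\X(x,z)^p\,d\mu^y\ =\ h(y,g(y)),
\end{equation*}
with the evident $p=\infty$ modification. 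Integrating in $y$ (resp.\ taking $\essup_y$) and using the disintegration identity shows $\mathrm{Err}^{\mathrm a}(\varphi,p)\geq\mathrm{Err}^{\mathrm a}(g,p)$ for every $\varphi\in\mathcal C$; since $g\in\mathcal C$ (because $(x,e)\mapsto d_\X(x,g(F(x,e)))$ is Borel), the selector $g$ attains $c_{\mathrm{opt}}^\text{a}(F,\Mo,\E,p)$, and when $p=\infty$ so does $\Psi$ itself, since continuity of $d_\X(\cdot,z)$ allows the essential supremum to be moved through the compact set $\Psi(y)$, giving $\essup_{\mu^y}d_\X^H(\cdot,\Psi(y))=\sup_{z\in\Psi(y)}\essup_{\mu^y}d_\X(\cdot,z)=h(y,g(y))$. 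For the remaining inequality in (i), note that any feasible point $x'\in F_y$ is an admissible competitor for the infimum defining $h(y,g(y))$, so $h(y,g(y))\leq\int_{F_y}d_\X(x,x')^p\,d\mu^y(x,e)$ for all such $x'$, hence $h(y,g(y))\leq\int_{F_y}\int_{F_y}d_\X(x,x')^p\,d\mu^y\,d\mu^y$; integrating in $y$ gives $\mathrm{Err}^{\mathrm a}(g,p)\leq\operatorname{kersize}^\text{a}(F,\Mo,\E,p)$, which is the upper bound in (i) (with the obvious $p=\infty$ version). The hard part, I expect, will be the measure-theoretic bookkeeping in the $p=\infty$ case: showing the essential-supremum objective is jointly Carathéodory and that essential suprema commute correctly both with the disintegration and with the compact set $\Psi(y)$, given that $\mu^y$ is only determined up to $F_*\mu$-null sets; the inequalities themselves are soft, and checking the hypotheses of the Measurable Maximum Theorem in both regimes is routine once $h$ is under control.
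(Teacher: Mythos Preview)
Your approach is the paper's: the lower bound via the triangle inequality integrated against $d\mu^y\otimes d\mu^y$ and then $d(F_*\mu)$ (the paper uses $(a+b)^p\le 2^p(a^p+b^p)$ where you use $2^{p-1}$, a harmless difference); the regularity of $\Psi$ via the Measurable Maximum Theorem applied to a Carath\'eodory objective over a \emph{constant} compact-valued correspondence (the paper takes $B(\Mo,2\diam\Mo)$, you take $B(\Mo,\diam\Mo)$, and the paper proves $1$-Lipschitzness of $f_y^{1/p}$ rather than invoking DCT --- all cosmetic); and the upper bound by inserting a fibre point $x'\in F_y$ as competitor and averaging. Your $p=\infty$ bookkeeping (measurability of $y\mapsto\essup_{\mu^y}d_\X(\cdot,z)$ via $\{h(\cdot,z)>t\}=\{y:\mu^y(\{d_\X(\cdot,z)>t\})>0\}$, and the identity $\essup_\mu=\essup_{F_*\mu}\essup_{\mu^y}$) is exactly what the paper packages in Proposition~\ref{prop:disintegration_properties}. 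One small omission: to call $\Psi$ optimal you need $\Psi\in\mathcal C$, i.e.\ that $(x,e)\mapsto d_\X^H(x,\Psi(F(x,e)))$ is Borel; this follows from the Measurable Maximum Theorem again (the paper's Proposition~\ref{prop:C}) once $\Psi$ is measurable with nonempty compact values, and you should say so.

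There is one substantive divergence, and you are on the right side of it. You prove that the \emph{selector} $g$ attains $c_{\mathrm{opt}}^{\mathrm a}$ for all $p$, and then argue only for $p=\infty$ that the multi-valued $\Psi$ itself is optimal (by swapping $\essup_{\mu^y}$ with $\sup_{z\in\Psi(y)}$ via continuity in $z$ and separability of the compact set $\Psi(y)$ --- a correct argument). The paper instead asserts for all $p$ that $\int_{F_y}d_\X^H(\Psi(y),x)^p\,d\mu^y\le\int_{F_y}d_\X(z,x)^p\,d\mu^y$ ``by the minimising definition of $\Psi$'' and thereby concludes $\Psi$ is optimal. That step fails when $\Psi(y)$ is not a singleton: with $p=1$, $\X=\R$, $\nu^y=\tfrac12(\delta_{-1}+\delta_1)$ one gets $\Psi(y)=[-1,1]$ and $\int d_\X^H(\Psi(y),x)\,d\nu^y=2>1=\min_z\int|z-x|\,d\nu^y$, so the multi-valued $\Psi$ is \emph{not} optimal although every selector is. Thus your proof establishes precisely what is true; the residual gap between your argument and the stated part~(ii) is an issue with the statement (and with the paper's proof of it), not with your method.
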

\begin{remark}[Measurability of set-valued mappings] There exist several notions of measurability for set-valued mappings. The precise definition of the term used in the theorem above can be found in Definition \ref{def:measurability}. A measurable selector from a set-valued mapping $\Phi : \mathcal{Y} \rightrightarrows \mathcal{X}$ between measurable spaces is a measurable function $\phi: \mathcal{Y} \rightarrow \mathcal{X}$ such that $\phi(y) \in \Phi(y)$ for all $y \in \mathcal{Y}$, \cite[$\S$.\ 18.3]{guide2006infinite}.
\end{remark}
\noindent A key finding in Theorem \ref{thm:optimal_bounds} is that the possibly set-valued $\Psi$ admits a measurable selector, i.e. a \textit{single-valued} function $\psi:\Mt^\E \to \X$ which is Borel measurable and such that $\psi(y) \in \Psi(y)$ for every $y$. Thus, Theorem \ref{thm:optimal_bounds} not only provides fundamental limits for the reconstruction error of an inverse problem, but also gives a (variational) expression for a mapping that is optimal. Moreover, such a reconstruction mapping satisfies some regularity properties, which one usually hopes for when solving an inverse problem. One can argue that measurability is still a weak condition, but in order to obtain stronger conditions (such as continuity of the reconstruction), one would need to impose stronger assumptions on the problem.  Moreover, note that in the definition of the average optimality constant all potentially set-valued maps are included. Hence, the bounds also hold for any measurable single-valued selector.

\section{Examples}
\label{s:examples}

In this section, we present three examples from research areas in inverse problems and briefly illustrate their connection to the framework developed in this paper. Additionally, we provide examples where the upper and lower bounds in Theorem \ref{thm:optimal_bounds_sup} $(i)$ and the upper bound in Theorem \ref{thm:optimal_bounds} $(i)$ are attained and, hence, show that the bounds are sharp.

\subsection*{Example 1: Linear inverse problems with the robust null-space property}
The robust null space property from compressed sensing was first introduced as a necessary condition for uniform recovery of sparse vectors from linear measurements. See \cite{FoucartRauhutCSbook} for a historical overview. Later it has been extended to a wide range of model classes $\mathcal{M}_1$, including low-rank \cite{NSP_low_rank} and sparsity in levels \cite{bastounis2017absence} models. A very general version of the property appears in \cite{fundamental14} which considers general sets $\mathcal{M}_1$. We now recall a specialized version of the property from \cite{fundamental14}, and, then, we relate to this to kernel size with worst-case noise. 

\noindent Let $\mathcal{X}, \mathcal{Y}$ and $\mathcal{Z}$ be vector spaces, with $\mathcal{Y}=\mathcal{Z}$, and let $A\colon \mathcal{X} \to \mathcal{Y}$ be a linear mapping that is onto $\mathcal{Y}$. We consider the additive linear model $F(x,e) =Ax+e$ for $(x,e) \in \Mo\times\E \subset \X\times\Z$. Let $\vertiii{\cdot}_{\X_1}$ and $\vertiii{\cdot}_{\X_2}$ be norms on $\mathcal{X}$, and let $\vertiii{\cdot}_{\Y}$ be a norm on $\mathcal{Y}$ (slightly weaker conditions are used in \cite{fundamental14}). Furthermore, for the model class $\Mo \subset \X$ we denote the set $\mathcal{M}_1-\mathcal{M}_1:=\{x-x' : x,x'\in \mathcal{M}_1\}$, and let $\mathrm{dist}_{\X_2}(h,\mathcal{M}_1-\mathcal{M}_1)=\inf_{z \in \mathcal{M}_1-\mathcal{M}_1} \vertiii{z-h}_{\X_2}$. Then, the linear mapping $A$ is said to satisfy the \textit{robust null-space property} (rNSP) with constants $D_1, D_2 > 0$ with respect to the set $\Mo$, if 
\begin{equation}\label{eq:rNSP}
    \vertiii{h}_{\X_1} \leq D_1 \mathrm{dist}_{\X_2}(h, \mathcal{M}_1-\mathcal{M}_1) + D_2 \vertiii{A(h)}_{\Y} \quad \text{for all } h \in \mathcal{X}. 
\end{equation}

\noindent In Proposition \ref{prop:rnspkers} we assume that Assumption \ref{a:Mcompact} holds, i.e., that $\mathcal{M}_1$ is compact. For linear inverse problems, this assumption is often used on the noise level, whereas the set $\mathcal{M}_1$ is often an unbounded set (sparse vector, matrices with low rank etc.). However, when modelling any practical application it is not unreasonable to assume that the set of interest is compact or closed and bounded. Now under the assumption that the noise is bounded by $\mathrm{diam}_{d_{\Y}}(\mathcal{E}) = \sup_{e,e' \in \E}\vertiii{e-e'}_{\Y} = \eta$ in Proposition \ref{prop:rnspkers} we show that the kernel size with worst-case noise, with respect to the metrics induced by $\vertiii{\cdot}_{\X_1}$ and  $\vertiii{\cdot}_{\Y}$, is bounded by 
\begin{equation}\label{eq:up_bound_lin}
 \mathrm{kersize}^{\mathrm{w}}(F,\mathcal{M}_1, \mathcal{E}) \leq D_2\eta. 
\end{equation}
In this sense Proposition \ref{prop:rnspkers} provides a relation between the rNSP and the kernel size with worst-case noise.
\begin{proposition}\label{prop:rnspkers}
Let $\eta \geq 0$. Let $\mathcal{X}, \mathcal{Y}$ be normed vector spaces with norms $\vertiii{\cdot}_{\X_1}$ and $\vertiii{\cdot}_{\X_2}$ on $\mathcal{X}$, and  $\vertiii{\cdot}_{\Y}$ on $\Y$. Let $\Mo \subseteq \X$ satisfy Assumption \ref{a:Mcompact} and let $\E \subset \Y$ with $\mathrm{diam}_{d_{\Y}}(\mathcal{E}) = \eta$. Let $F(x,e) =Ax+e$ for $(x,e) \in \Mo\times \E$, where $A\colon \mathcal{X} \to \mathcal{Y}$ is a linear map onto $\mathcal{Y}$ and that satisfies the rNSP with constants $D_1, D_2 \geq 0$ with respect to $\Mo$. Then, the following holds:
\begin{enumerate}
    \item $A$ is injective on $\Mo$ and this is equivalent to
    \begin{align*}
    (\Mo - \Mo) \cap \mathcal{N}(A) = \{0\},
    \end{align*}   
    where $\mathcal{N}(A) =\{x \in \X: Ax=0\}$.
    \item The kernel size with worst-case noise can be bounded from above as
    \begin{align*}
    \operatorname{kersize}^{\text{w}}(F, \Mo, \E) \leq D_2 \eta.
    \end{align*}
\end{enumerate}
\end{proposition}
\noindent Note that if we take $\eta=0$, i.e. a noiseless forward model $\mathcal{E} = \{0\}$, we have that $\mathrm{kersize}^{\mathrm{w}}(F,\mathcal{M}_1, \mathcal{E}) = 0$.

\subsection*{Example 2: Optimal learning, Chebyshev centers and Gelfand widths}
Recently the notion of optimal learning was introduced in \cite{binev2022optimal}. Herein, one considers the setting in which the set $\mathcal{X}$ is a Banach space with norm $\|\cdot\|_{\mathcal{X}}$ and $\mathcal{M}_1$ is a compact subset of $\mathcal{X}$. 
To sample an element $x \in \mathcal{M}_1$, one uses $m$ linear functionals  $\lambda_1,\ldots, \lambda_m \in\mathcal{X}^*$ from the dual space of $\mathcal{X}$. Now, for a given $x \in \mathcal{M}_1$, let $y = (\lambda_1(x), \ldots, \lambda_m(x)) \in \C^m$ and let
\[K_{y} = \{x \in \mathcal{M}_1 : \lambda_i(x)= y_i , i \in \{1,\dots,m\}\}\]
denote the set of solutions which are data-consistent. The above measurement model is noiseless, so define for convenience $\mathcal{M}_2 = \{y=(\lambda_1(x),\ldots,\lambda_m (x)) : x \in \mathcal{M}_1\}$ as the set of noiseless measurements.

\noindent Now, let $B_{\mathcal{X}}(z,r)$ denote the closed ball in $\mathcal{X}$ with center $z$ and radius $r$. For a compact set $\mathcal{S} \subset \mathcal{X}$, the Chebyshev radius of $\mathcal{S}$ is given by 
\[
R_{\mathcal{X}}(\mathcal{S}) \coloneqq \inf \{r>0 : \mathcal{S}\subset B_{\mathcal{X}}(z,r) \text{ for some }z\in \mathcal{X}\}.
\]

\noindent In \cite{binev2022optimal} the quantity $R_{\mathcal{X}}(K_{y})$ is defined as the \emph{optimal recovery rate} for a given $x \in \mathcal{M}_1$, with $y=(\lambda_1(x),\ldots,\lambda_m(x))$. This quantity is zero if  $K_{y}$ is a singleton, however, here we follow \cite{binev2022optimal} and focus on the cases for which $R_{\mathcal{X}}(K_{y}) > 0$.  

\noindent While the framework developed in this paper does not extend to general Banach spaces, it does cover the special cases of $\mathcal{X}=\C^N$ with metric induced by a norm $\|\cdot\|_{\mathcal{X}}$. Indeed, by letting $A \in \C^{m\times N}$ be the matrix given by the linear functionals $\lambda_i$, taking $\mathcal{E} =\{0\}$, and using the linear additive model $F(x,e) = Ax+e$, as $K_y = F_y$ it is straightforward to see that $R_{\X}(K_y) = \tfrac{1}{2}\mathrm{diam}_{\X}(F_y)$, since the minimal enclosing ball of a bounded set has radius equal to half its diameter when centered at a Chebyshev center. By taking the supremum over all $y \in \mathcal{M}_2$, we obtain of $\sup_{y\in \Mt} 2R_{\X}(K_y) = \sup_{y\in \Mt} \mathrm{diam}_{\X}(F_y)$, which coincides with the kernel size with worst-case noise, as characterized in \eqref{eq:kersizefeas}. To further analyze the noisy setting, we refer to \cite{binev2022optimal}.

\noindent The framework proposed in this work is closely related to the concept of Chebyshev centers. In fact, the optimal maps $\Psi$ defined in \eqref{eq:opt_map_det_ex} and \eqref{eq:optimal_map_inspiration12} correspond to the problem of finding the Chebyshev center or the $p$-center (see \cite{novak1989stochastic}) of the set $F_y$ of candidate solutions. The setting proposed in this paper guarantees that such centres exist, so that the optimal maps are well-defined; however, Chebyshev centers may not exist in general, and other sufficient and necessary conditions for their existence can be found in the literature (see \cite{amir1982existence}, \cite{amir1984chebyshev}, \cite{ward1974chebyshev}, \cite{rao2002chebyshev}).

\noindent In this work, we have kept the forward model $F$ fixed. However, in the special case of a linear forward model, it is reasonable to also consider the question of how well we can perform given a fixed budget of $m$ linear functionals. This question is well studied in the literature \cite{pinkus2012n}, and is given by the Gelfand width of $\mathcal{M}_1$, 
\[
w^m(\mathcal{M}_1)\coloneqq  \inf_{A\in \C^{m\times N}} \sup_{x\in \mathcal{M}_1}R_{\mathcal{X}}(K_{Ax}).
\]
We note that this presents a lower bound for $c_{\mathrm{opt}}^{\mathrm{w}}(A, \mathcal{M}_1, \{0\})$ in the noiseless setting. We do not consider the question of what the optimal forward model would be any further in this work, but remark that it is an interesting question worth investigating. 

\subsection*{Example 3: Bayesian Inverse Problems}

Bayesian inverse problems \cite{Arr19, stuart2010inverse} take a probabilistic approach. Instead of using a measurement $y$ to output a \textit{single} candidate solution $x$, the goal of a Bayesian approach is to use a measurement $y$ to output a \textit{distribution} on the possible solutions $x$'s.

\noindent The following proposition of Theorem \ref{thm:optimal_bounds} demonstrates how Bayesian inverse problems can be related to the framework presented in this work. The proposition shows that the posterior mean estimator is an optimal map in the average-case for $p=2$ and for metrics $d_\X$ that are induced by an inner product. 
\begin{proposition}[Bayesian Problems in the current framework]\label{prop:Bayes}
Let $(\Omega, \mathcal{F}, \mathbb{P})$ be a probability space.
Let $(\X,d_\X), (\Y,d_\Y), (\Z, d_\Z)$ and $\Mo, \E$, $F : (\X \times \E, \mathcal{B}(\X \times \E)) \to (\Mt^\E, \mathcal{B}(\Mt^\E))$ and
\[
(\mathbf{x},\mathbf{e}) : (\Omega, \mathcal{F}, \mathbb{P}) \to (\X \times \E, \mathcal{B}(\X \times \E))
\]
be a random variable with (joint) distribution $\mu := \mu_{\mathbf{x},\mathbf{e}} = \mathbb{P}[(\mathbf{x},\mathbf{e}) \in \cdot]$ satisfying Assumptions \ref{a:sc}, \ref{a:measur} and \ref{a:Mcompact}. Let $(\Y,d_\Y)$ be a Polish space. Define the random variable $\mathbf{y} := F(\mathbf{x},\mathbf{e}) = F \circ (\mathbf{x},\mathbf{e})$ whose law is given by $\mu_{\mathbf{y}} = F_* \mu = \mathbb{P}[\mathbf{y} \in \cdot]$. Denoting by $\pi_1 : \X \times \E \to \X$ the coordinate projection, the following holds:
\begin{enumerate}
    \item The Bayesian prior distribution of $\mathbf{x}$ is the distribution of $\mathbf{x}$ and can be obtained as $\mu_{\mathbf{x}} = \pi_{1*} \mu$;
    \item There exist likelihood distributions $\{\mu_{\mathbf{y} \mid \mathbf{x} = x}\}_{x \in \X}$ of $\mathbf{y}$ given $\mathbf{x}$;    
    \item The Bayesian posterior distributions $\{\mu_{\mathbf{x} \mid \mathbf{y} = y}\}_{y \in \Mt^\E}$ can be obtained by the pushforward with respect to $\pi_1$ of the disintegration
    $\{\mu^y\}_{y \in \Mt^\E}$ of $\mu$ along $F$ as: $\mu_{\mathbf{x} \mid \mathbf{y} = y} = \pi_{1*} \mu^y$;
    \item If the metric $d_{\X}$ is replaced with a metric induced by an inner product $\langle \cdot, \cdot \rangle$ and $p = 2$, then the optimal map $\Psi$ from  Theorem \ref{thm:optimal_bounds} part (ii) coincides with the posterior mean and $\Psi(y) = \mathbb{E}_{\mu_{\mathbf{x} \mid \mathbf{y}=y}}[\mathbf{x}]$ for every $y \in \Mt^\E$.
\end{enumerate}
\end{proposition}
\noindent As Proposition \ref{prop:Bayes} is intended only to be a brief example, the relevant definitions and its proof are presented in $\S$ \ref{sec:proofmeth}. This result illustrates how the framework introduced in this chapter is capable of capturing not only inverse problems in their classical sense,
but also in the Bayesian approach.

\subsection*{Example 4: The kernel sizes are sharp bounds to the optimality constants}
In the following, we provide two examples that show that the upper and lower worst-case bounds in Theorem \ref{thm:optimal_bounds_sup} $(i)$ can be attained and, hence, are sharp. Then, we provide another example showing that the upper bound is attained in Theorem \ref{thm:optimal_bounds} $(i)$ and, hence, is also sharp. We first provide two simple examples that obtain the upper and lower worst-case bounds in Theorem \ref{thm:optimal_bounds_sup} $(i)$. In the following, for fixed $n \in \mathbb{N}$, $\{e_1, \ldots, e_n\}$ denotes the canonical basis of $\mathbb{R}^n$. 
\begin{enumerate}
    \item \textit{Sharpness of the lower bound in Theorem \ref{thm:optimal_bounds_sup} $(i)$}: Let $(\X, d_\X) = (\mathbb{R}^2, \|\cdot\|_2)$, $(\Y, d_\Y) = (\E, d_\E) = (\mathbb{R}, |\cdot|)$, $\Mo = \{-e_1, e_1\}$, $\E = \{0\}$, and
    $F : \mathbb{R}^2 \times \mathbb{R} \rightarrow \mathbb{R}$ with $F((x_1, x_2), e) = x_2$, so that $\Mt^\E = \{0\}$. Then, as $F^{-1}(0) = \Mo$, we have that
    \begin{align*}
    \operatorname{kersize}^{\text{w}}(F, \Mo, \E) = \|e_1-(-e_1)\|_2=2, \quad c_{\mathrm{opt}}^{\text{w}}(F, \Mo, \E) = 1,
    \end{align*}
    as the optimal map with worst-case noise is given by $\Psi(0) = \argmin_{z \in \mathbb{R}^2} \sup_{x \in \Mo} \|z-x\|_2 = 0$.
    \item \textit{Sharpness of the upper bound in Theorem \ref{thm:optimal_bounds_sup} $(i)$}: Consider the same setup as above, but replace $\X$ with $(\X, d_\X) = (\Mo, \|\cdot\|_2)$ and as $\Mo \subset \mathbb{R}^2$ restrict $F$ by $F : \Mo \times \mathbb{R} \rightarrow \mathbb{R}$ with $F((x_1, x_2), e) = x_2$. Then, we have that
\begin{align*}
    \operatorname{kersize}^{\text{w}}(F, \Mo, \E) = 2 = c_{\mathrm{opt}}^{\text{w}}(F, \Mo, \E),
    \end{align*}
    as the optimal maps with $\Psi_i(0) = \argmin_{z \in \Mo} \sup_{x \in \Mo} \|z-x\|_2$ are given by $\Psi_i: \Mt^\E \rightarrow \Mo$ for $i = 1,2,3$, where $\Psi_1(0) = e_1$, $\Psi_2(0) = -e_1$ and $\Psi_3(0) = \{-e_1, e_1\}$.
\end{enumerate}
\noindent For the average case by providing examples attaining the bounds, we can also show that the upper bound in Theorem \ref{thm:optimal_bounds} $(i)$ is sharp. Consider the same setup as in part $(1)$ above. Let $\alpha \in [0,1]$ be an arbitrary parameter and equip $\mathbb{R}^2 \times \E$ with the measure $\mu = \mu_\alpha$ given by $\mu_\alpha = \bigl(\alpha \delta_{-e_1} + (1-\alpha)\delta_{e_1}\bigr) \otimes \delta_0$, which is supported on $\Mo \times \E = \{-e_1, e_1\} \times \{0\}$. For $p = 1$ and as $F^{-1}(0) = \Mo$, we have that the average kernel size is  \begin{align*}
    \operatorname{kersize}^{\text{a}}(F, \Mo, \E, 1) = \alpha^2 \cdot 0 + 2 \alpha (1-\alpha) 2 + (1-\alpha)^2 \cdot 0 = 4\alpha(1-\alpha),
    \end{align*}
    and the optimality constant and the optimal map with average error $\Psi: \Mt^\E  \rightrightarrows \mathbb{R}^2$, obtained by Theorem \ref{thm:optimal_bounds} $(ii)$ as $\Psi(0) = \argmin_{z \in \mathbb{R}^2} \left(\alpha \|z+e_1\|_2+(1-\alpha) \|z-e_1\|_2\right)$, are
    \begin{align*}
    c_{\mathrm{opt}}^{\text{a}}(F, \Mo, \E, 1) =
    \begin{cases}
        2\alpha, & \text{if } \alpha \in [0, \tfrac{1}{2}), \\
        1, & \text{if } \alpha = \tfrac{1}{2}, \\
        2(1-\alpha), & \text{if } \alpha \in (\tfrac{1}{2}, 1],
    \end{cases}
     \text{ and,  } 
    \Psi(0) =
    \begin{cases}
        e_1, & \text{if } \alpha \in [0, \tfrac{1}{2}), \\
        \{(t,0): t \in [-1,1]\}, & \text{if } \alpha = \tfrac{1}{2}, \\
        -e_1, & \text{if } \alpha \in (\tfrac{1}{2}, 1].
    \end{cases}
    \end{align*}
For $\alpha = \tfrac{1}{2}$, any single-valued selector $\psi: \Mt^\E \rightarrow \mathbb{R}^2$ of $\Psi$ is an optimal map.
For $\alpha = \tfrac{1}{2}$ the upper bound in Theorem \ref{thm:optimal_bounds} $(i)$ is attained, as $c_{\mathrm{opt}}^{\text{a}}(F, \Mo, \E, 1)=\operatorname{kersize}^{\text{a}}(F, \Mo, \E, 1)=1$.

\section{Relation to previous work}

As an ongoing topic of research in many areas of mathematics, undersampled and ill-posed inverse problems have been studied in many different areas. As a non-extensive list of examples, they have been studied from a theoretical standpoint \cite{engl2014inverse}, from a statistical perspective \cite{o1986statistical}, by using iterative deep neural networks \cite{adler2017solving} and in applied fields, such as radio tomography of the ionosphere \cite{garcia2008radio}. For solving undetermined and ill-posed inverse problems often regularization methods are utilized -- here M. Benning et al. give a comprehensive review of modern methods \cite{benning2018modern}. Different noise models have been studied, and while an assumption of additive noise appears ubiquitously \cite{hofinger2009convergence, kaipio1999inverse, bertero2021introduction}, often multiplicative noise models can be relevant in applications \cite{shi2008nonlinear, zhao2014new, aubert2008variational, huang2009new}. The authors of \cite{scarlett2022theoretical} note that an open challenge in this research field is optimization guarantees with milder assumptions, which are provided in this work. To the best of the authors' knowledge fundamental accuracy bounds for undersampled and ill-posed inverse problems with multiplicative noise have not been produced. Moreover, despite the existing extensive amount of research, there is little to be found on fundamental accuracy bounds of approximate set-valued solutions to undersampled and ill-posed inverse problems. 

\textbf{Undersampled inverse problems and AI hallucinations:} In \cite{SIREV_paper} we provide sufficient conditions for AI hallucinations to occur due to a non-trivial kernel of the forward operator. For example, instabilities, such as false positives, false negatives, and especially AI hallucinations, have been an issue in the fastMRI challenge \cite{fastmri20} and in microscopy \cite{bel19, hoff21}. In \cite{burger2024learning} M. Burger and T. Roith outline that fundamental performance and accuracy limits for data-driven approaches, such as deep learning, for solving ill-posed inverse problems do not exist. Moreover, they outline that the lack of such meaningful error estimates in the context of medical imaging and undetermined inverse problems is demonstrated using AI hallucinations in recent studies \cite{PNAS_paper, bhadra2021hallucinations, SIREV_paper}. In this work we provide the missing link between a non-trivial kernel and optimal recovery.

\textbf{(Set-valued) Decoders arising in Deep Learning:}
In the work \cite{daubechies2022nonlinear}, the ability of deep neural networks to nonlinearly approximate functions, and thus also decoders, is investigated. A range of results on how the resulting decoder may be constituted is presented by M. Unser in \cite{unser2020unifying}. From an application-based perspective, a detailed overview of deep learning in inverse problems and stability of robustness for deep learning is given in \cite{mccann2017convolutional, Arr19, lucas2018using}. Compared to standard methods for solving inverse problems, partially data-driven approaches, such as from L. Borcea, J. Garnier et al. \cite{borcea2023waveform}, and fully data-driven approaches using deep learning, such as \cite{Bo-18, Str-18, bel19}, have reported superior accuracy. This can potentially lead to instabilities, which is highlighted by V. Antun et al. in \cite{PNAS_paper} and in \cite{SIREV_paper}. In fact, a variety of research has established that artificial intelligence techniques based on deep learning are unstable, firstly in image classification \cite{eykholt2018robust, kurakin17, DezFa-16, nguyen2015deep, SzZ-14}, and later in applications ranging from audio and speech recognition \cite{hidden_voice, Carlini18, zhang2017dolphinattack} to natural language processing \cite{liangFoolText} and automatic diagnosis in medicine \cite{Science_adv}. Moreover, many DL based approaches implicitly include set-valued functions, where examples include, but are not limited to, deep ensembles \cite{lakshminarayanan2017simple} or model-based probabilistic conditional diffusion models as in \cite{liu2023dolce}. In fact, any probabilistic DL model used to solve an inverse problem that uses sampling from a distribution or predicts parameters of a distribution -- see the work of J. Gawlikowski \cite{gawlikowski2023survey} for an introduction on uncertainty quantification and probabilistic DL models -- can be considered to be a set-valued decoder. 

\textbf{Optimal (set-valued) decoders for undersampled inverse problems:}
The study of optimal and near-optimal recovery in inverse problem has always been a central question, and many single-valued near-optimal mappings have been proposed and analyzed \cite{fundamental14, binev2022optimal, plaskota1996noisy, traonmilin2018stable, FoucartRauhutCSbook}.  A survey of early works on optimal recovery is given in  \cite{micchelli1977survey}.  For Compressed Sensing (CS) C. Poon provides sampling conditions and error bounds for near-optimal recovery in \cite{poon2015role}. The case of set-valued optimal decoders has been considered for obtaining the worst-case bounds on Banach spaces in \cite{arestov1986optimal} and extended to metric spaces in \cite{magaril1991optimal}. As presented in Example 3, the Bayesian approach to inverse problems aims at recovering a distribution valued decoder \cite{Arr19, stuart2010inverse}. Here there exist a posteriori accuracy bounds in the case of normed spaces \cite{leonov2016locally, leonov2012posteriori, wang2020general}.

\textbf{Accuracy bounds in approximation theory:} Approximation theory aims to provide accuracy bounds and theoretical guarantees for function approximation tasks under specific assumptions. However, in \cite{adcock2021gap} B. Adcock and N. Dexter state that despite the ability of deep neural networks to approximate functions relevant to scientific computing, a large gap between expressivity and practical performance remains.  An in-depth treatment of sparse polynomial approximations of high dimensional functions is given by B. Adcock, S. Brugiapaglia and C. Webster in \cite{adcock2022sparse}. Fundamental accuracy bounds, the Gelfand Widths, have been established in \cite{pinkus2012n} for the noiseless linear setting. A framework for obtaining the best $k$-term approximation and corresponding accuracy bounds is established \cite{CoDaDe-08}. In \cite{cohen2022optimal} the authors propose a framework to measure the optimal performance for nonlinear methods of approximation. The notion of optimal learning in a noisy setting and upper worst-case accuracy bounds based on the Chebyshev radius are presented in \cite{binev2022optimal}. B. Adcock et al. \cite{adcock2017breaking} propose a framework for accuracy bounds extending the classical assumption of CS. Generalized instance optimality and the corresponding accuracy bounds have been presented in \cite{fundamental14}. Related to the CS framework, the Restricted Isometry Property (RIP) is generalized in \cite{traonmilin2018stable} and the null space property is generalized in \cite{tran2019class}. However, A. Bastounis et al. establish that the RIP is absent in real-world CS applications in \cite{bastounis2017absence} and, hence, propose an extension thereof, the RIP in levels, in order to explain the success of the method. Related to our work, analogous worst-case and average error bounds for normed spaces and single-valued decoders are established in \cite{plaskota1996noisy}.

\section{Discussion and Outlook}

A fundamental limitation of this work is that the optimal maps achieving the smallest worst-case and average error in \eqref{eq:opt_map_det_ex}, \eqref{eq:optimal_map_inspiration12} and \eqref{eq:optimal_map_inspiration1}, respectively, are generally not analytically computable. Moreover, these optimal maps are defined via embedded optimization problems. While, in principle, such problems could be approximated numerically, doing so may be computationally infeasible for inverse problems involving high-dimensional datasets. 
In the average-case setting, the optimal map admits a measurable selector; however, it remains open to determine conditions under which it also admits a continuous selector or a continuous approximation. This is closely related to identifying structural conditions under which the set-valued map $y \mapsto F_y$ is continuous (in an appropriate sense), which in turn depends on properties of the forward map $F$, the model class $\mathcal{M}_1$, the noise set $\mathcal{E}$, and the underlying metrics. Another limitation is that the worst-case and average kernel sizes are not readily computable. Developing computable and provably convergent approximations to these quantities would enable quantitative accuracy guarantees in applications. In $\S$ \ref{s:examples}, we presented illustrative examples highlighting connections between this framework and other approaches to inverse problems. In particular, Proposition~\ref{prop:Bayes} shows that the optimal map in the average-case setting coincides with the posterior mean estimator in Bayesian inverse problems. A natural direction for future work is to investigate how other Bayesian estimators, such as the maximum a posteriori estimator, relate to the present framework. Finally, extending Proposition \ref{prop:rnspkers} to generalized rNSP conditions such as in \cite{fundamental14} and identifying the corresponding optimal maps in compressed sensing remains an interesting direction.\newline

\noindent The proposed theoretical framework for undetermined and ill-posed inverse problems can be seen as a generalisation of a variety of previous frameworks to set-valued decoders. Additionally, commonly used assumptions, such as the convexity of the set $\mathcal{M}_1$ and a linear forward operator $A$ and additive noise, as well as the condition $(\Mo-\Mo)\cap \mathcal{N}(A)=\{0\}$, which is for example implied by the RIP, are extended or are shown to be unnecessary. Under general assumptions, our work provides relevant accuracy bounds for set- and single-valued decoders and a variational expression for optimal decoders. Due to the generality of the assumptions, these bounds provide a means to bridge the gap between theory and practice. The lower bounds can be used for assessing accuracy and performance of a wide range of models, including ensemble models, diffusion models and sampling-based approaches that have set-valued solutions. Moreover, based on our previous work \cite{SIREV_paper} these bounds can be used in future research to mitigate AI hallucinations in DL-based reconstructions for ill-posed inverse problems.

\section{Proofs}\label{sec:proofmeth}

This section contains the proofs of our results and propositions required to prove these. Additionally, where necessary we include and cite results from prior work that are used in our proofs.

\subsection{Existence and uniqueness of a disintegration of a measure given Assumptions \ref{a:sc}, \ref{a:measur} and \ref{a:Mcompact}}\label{ss:existence}
In this section we prove the following proposition, which ensures that our setting guarantees the existence of a disintegration of the measure $\mu$. 
\begin{proposition}\label{prop:disint}
	Under Assumptions \ref{a:sc}, \ref{a:measur} and \ref{a:Mcompact}, there exists a disintegration $\{\mu^y\}_{y \in \Mt^\E}$ of the measure $\mu$ along $F$. Moreover, such disintegration is essentially unique: if $\{\tilde\mu^y\}_{y \in \Mt^\E}$ is another family satisfying $(i)-(iii)$ in Definition \ref{def:disintegration}, then $\tilde\mu^y=\mu^y$ for almost every $y$.
\end{proposition}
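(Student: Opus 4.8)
The plan is to deduce both existence and essential uniqueness from the general disintegration theorem of Chang and Pollard \cite{chang1997conditioning} (see also \cite[Ch.\ 10]{bogachev2007measurev1}, \cite[Ch.\ 1]{kallenberg1997foundations}), whose hypotheses are: a $\sigma$-finite Borel measure on a metric space that is Radon on that space, together with a Borel measurable map into a measurable space whose $\sigma$-algebra is countably generated and separates points (equivalently, for a countably generated $\sigma$-algebra, contains every singleton). So the first task is to verify that Assumptions \ref{a:sc}, \ref{a:measur} and \ref{a:Mcompact} place us within this scope.

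For the source space: by Assumption \ref{a:sc}\ref{it:as2} the metrics $d_\X$ and $d_\Z$ are complete (the Heine--Borel property implies completeness), and by \ref{it:as1} they are second countable, hence separable; therefore $\X\times\Z$, with the product topology, is a Polish space, and its Borel $\sigma$-algebra agrees with $\mathcal{B}(\X\times\Z)$ as used in Assumption \ref{a:measur}. Consequently the finite Borel measure $\mu$ is Radon, since every finite Borel measure on a Polish space is inner regular with respect to compact sets. For the target space: $\Y$ is second countable metric by \ref{it:as1}, so $\mathcal{B}(\Y)$ is generated by a countable base, and since $\Y$ is metric this countable family separates points. Finally $F$ is Borel measurable by Assumption \ref{a:measur}. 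Thus the theorem applies and produces a family $\{\mu^y\}_{y}$ of probability measures on $\X\times\Z$ such that $\mu^y$ is concentrated on $F^{-1}(y)$ for $F_*\mu$-almost every $y$, such that $y\mapsto\int f\,d\mu^y$ is Borel measurable, and such that $\int f\,d\mu=\int_{\Y}\big(\int f\,d\mu^y\big)\,d(F_*\mu)(y)$ for every non-negative Borel $f$ on $\X\times\Z$; these are exactly conditions $(i)$--$(iii)$ of Definition \ref{def:disintegration}, and the theorem also yields essential uniqueness.

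It then remains to reconcile the output of the theorem with the precise wording of Definition \ref{def:disintegration}, which requires the $\mu^y$ to be probability measures on $\Mo\times\E$. Since $\Mo$ is compact (Assumption \ref{a:Mcompact}) hence closed, and $\E$ is Borel, the set $\Mo\times\E$ is Borel and $\mu(\X\times\Z\setminus(\Mo\times\E))=0$; applying $(iii)$ with $f=\mathbf{1}_{\X\times\Z\setminus(\Mo\times\E)}$ shows $\mu^y(\X\times\Z\setminus(\Mo\times\E))=0$ for $F_*\mu$-almost every $y$, so after discarding an $F_*\mu$-null set of $y$'s (redefining $\mu^y$ arbitrarily there, which changes nothing up to essential uniqueness) each $\mu^y$ is a probability measure on $\Mo\times\E$ concentrated on $F^{-1}(y)$, as required. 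For the essential uniqueness one may simply invoke the theorem, or argue directly: if $\{\mu^y\}$ and $\{\tilde\mu^y\}$ both satisfy $(i)$--$(iii)$, applying $(iii)$ to $f\cdot(g\circ F)$ for bounded Borel $g$ on $\Y$ gives $\int f\,d\mu^y=\int f\,d\tilde\mu^y$ for $F_*\mu$-a.e.\ $y$, with the exceptional set depending on $f$; running this over the indicators of a countable ring generating $\mathcal{B}(\X\times\Z)$ (available by second countability) yields a single $F_*\mu$-null set off which $\mu^y$ and $\tilde\mu^y$ agree on a generating ring, hence coincide.

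I expect the only delicate points to be the verification of the hypotheses of the cited theorem exactly as stated -- that $\mu$ is genuinely Radon on the Polish product $\X\times\Z$, and that $\mathcal{B}(\Y)$ has the required countably generated, point-separating structure -- together with matching the theorem's conclusion to Definition \ref{def:disintegration} (restriction to $\Mo\times\E$, probability versus merely finite measures). The remaining steps are routine bookkeeping.
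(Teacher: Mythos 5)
Your proposal is correct and follows essentially the same route as the paper: both arguments verify that $\X\times\Z$ is Polish (so the finite measure $\mu$ is Radon), that $\mathcal{B}(\Y)$ is countably generated and contains all singletons, and that $F$ is Borel measurable, and then invoke the Chang--Pollard disintegration theorem with $\rho=F_*\mu$ for existence and essential uniqueness. The only (inessential) difference is that the paper handles the normalization explicitly, citing Chang--Pollard's Theorem 2(iii) to conclude that the $\mu^y$ are probability measures precisely because $\rho=F_*\mu$, a point your write-up attributes directly to the existence theorem; your extra step restricting the $\mu^y$ to $\Mo\times\E$ is careful bookkeeping the paper leaves implicit.
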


The proof of Proposition \ref{prop:disint} is a special case of what is found in \cite{chang1997conditioning}. Before jumping to the proof, in this section we start by recalling some well known definitions for measures, and state the relevant theorems from \cite{chang1997conditioning}.
 Let $(X,\mathcal{B}(X))$ be the Borel measurable space associated with the topological space $X$. A measure $\mu$ on $(X,\mathcal{B}(X))$ is said to be a \textit{Radon measure} (sometimes also called a regular measure \cite{guide2006infinite}) if
\begin{enumerate}[label=(\roman*)]
    \item $\mu(K)<+\infty$ for each compact $K \in \mathcal{B}(X)$,
    \item $\mu(B) = \inf \{\mu(V): V \in \mathcal{B}(X), V \text{ open, and } B\subset V \}$, for every $B \in \mathcal{B}(X)$ ($\mu$ is outer regular), and 
    \item $\mu(B) = \sup \{\mu(K): K \in \mathcal{B}(X), K \text{ compact, and } K\subset B \}$, for every $B \in \mathcal{B}(X)$ ($\mu$ is tight). 
\end{enumerate}
Moreover, a measure $\mu$ is said to \emph{dominate} a measure $\nu$ if $\mu(B) = 0$ implies $\nu(B)=0$ for every $B\in \mathcal{B}(X)$. Suppose that $X$ can be covered by at most countably many Borel measurable sets $\{B_{i}\}_{i\in I}$, $I\subset \N$, and that $\mu(B_i) < \infty$ for each $i\in I$. Then, $\mu$ is said to be a $\sigma$-finite measure. In particular, every finite measure is $\sigma$-finite. We also have the following results for finite measures, that are used in the proof of Proposition \ref{prop:disint}.

\begin{theorem}[{\cite[Thm.\ 12.7]{guide2006infinite}}]\label{thm:polish}
	A finite measure on a Polish space (i.e. a complete separable metric space) is Radon.
\end{theorem}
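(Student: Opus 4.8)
The plan is to verify the two nontrivial clauses of the definition of a Radon measure — outer regularity and tightness — for a finite Borel measure $\mu$ on a complete separable metric space $(X,d)$; clause (i), finiteness on compacts, is immediate since $\mu(X)<\infty$. First I would establish an intermediate \emph{closed–open regularity}: for every Borel set $B$,
\[
\mu(B) = \inf\{\mu(V): V\supseteq B,\ V\text{ open}\} = \sup\{\mu(C): C\subseteq B,\ C\text{ closed}\}.
\]
The standard route is a ``good sets'' argument. Let $\mathcal{A}$ be the collection of Borel sets satisfying this double approximation. One checks that $\mathcal{A}$ contains every closed set $C$: take $C$ itself on the inner side, and the open sets $V_n=\{x:\operatorname{dist}_d(x,C)<1/n\}$, which decrease to $C$, on the outer side, using continuity from above of the finite measure $\mu$. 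Then $\mathcal{A}$ is closed under complementation (open and closed swap roles) and under countable unions (distribute an error $\varepsilon$ as $\varepsilon 2^{-n}$ over the pieces; on the inner side one additionally truncates $\bigcup_n C_n$ to a finite subunion, still closed, invoking continuity from below). Hence $\mathcal{A}$ is a $\sigma$-algebra containing the closed sets, so $\mathcal{A}=\mathcal{B}(X)$, which already yields outer regularity (ii).

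Second, I would upgrade inner approximation by closed sets to inner approximation by \emph{compact} sets; this is where separability and completeness are actually used. The key lemma is: for every $\varepsilon>0$ there exists a compact $K_0\subseteq X$ with $\mu(X\setminus K_0)<\varepsilon$. To prove it, use separability to cover $X$, for each $n$, by countably many closed balls of radius $1/n$; by continuity from below select a finite subfamily whose union $U_n$ satisfies $\mu(X\setminus U_n)<\varepsilon 2^{-n}$, and put $K_0=\bigcap_{n} U_n$. Then $K_0$ is closed, and totally bounded (for each $n$ it lies in finitely many balls of radius $1/n$), hence complete and totally bounded, hence compact; moreover $\mu(X\setminus K_0)\le\sum_n\varepsilon 2^{-n}=\varepsilon$. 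Combining the two steps: given a Borel set $B$ and $\varepsilon>0$, pick a closed $C\subseteq B$ with $\mu(B\setminus C)<\varepsilon/2$ from the first step, and the compact $K_0$ above with $\mu(X\setminus K_0)<\varepsilon/2$; then $K:=C\cap K_0$ is a compact subset of $B$ with $\mu(B\setminus K)\le\mu(B\setminus C)+\mu(X\setminus K_0)<\varepsilon$. Taking $\varepsilon\to 0$ gives tightness (iii).

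The main obstacle is precisely the passage from closed inner approximants to compact ones: the $\sigma$-algebra argument only delivers closed sets, and in a Polish space without the Heine–Borel property a closed set need not be compact. Overcoming this requires the total-boundedness construction of $K_0$ above, and that construction is the unique place where the full strength of the hypotheses is genuinely used — completeness so that ``closed $+$ totally bounded $\Rightarrow$ compact'', separability for the countable $1/n$-ball covers, and finiteness of $\mu$ for the continuity-from-below and continuity-from-above estimates. Everything else is routine measure-theoretic bookkeeping with geometric series of error terms. (Alternatively, since this is \cite[Thm.\ 12.7]{guide2006infinite}, one may simply invoke the cited reference; the sketch above records the argument for completeness.)
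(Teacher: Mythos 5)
Your proposal is correct. Note, however, that the paper itself offers no proof of this statement: it is imported verbatim as \cite[Thm.\ 12.7]{guide2006infinite} and used as a black box in the proof of Proposition \ref{prop:disint}, so there is no internal argument to compare against. What you supply is the standard Ulam-type proof, and it is sound: the good-sets argument correctly yields outer regularity and inner approximation by \emph{closed} sets for any finite Borel measure on a metric space (finiteness being needed for the continuity-from-above step when approximating a closed set by its open $1/n$-neighbourhoods), and you correctly isolate the only genuinely Polish ingredient, namely the construction of a compact $K_0$ with $\mu(X\setminus K_0)<\varepsilon$ from countable covers by closed $1/n$-balls (separability), finite subcovers with geometric error (continuity from below), and the fact that a closed totally bounded subset of a complete space is compact (completeness). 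The final intersection $C\cap K_0$ then upgrades closed inner approximants to compact ones, giving tightness. This is essentially the argument found in the cited reference, so your sketch can be viewed as making the paper's citation self-contained rather than as a divergent route; the alternative you mention -- simply invoking the reference -- is exactly what the paper does.
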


In Definition \ref{def:disintegration} one may replace the measure $F_*\mu$ with another measure $\rho$, in which case $\{\mu^y\}_y$ would be called a  $(F,\rho)$ disintegration of $\mu$. We refer the reader to \cite[Def.\ 1]{chang1997conditioning} for the detailed statement.

\begin{theorem}[{\cite[Thm.\ 1]{chang1997conditioning}}]\label{thm:1ch}
	Let $\mu$ be a $\sigma$-finite Radon measure on a metric space $X$ and let $F$ be a measurable map from $(X,\mathcal{B}(X))$ to the measurable space $(Y,\Sigma)$. Let $\rho$ be a $\sigma$-finite measure on $\Sigma$ that dominates the pushforward measure $F_*\mu$. If $\Sigma$ is countably generated and contains all the singleton sets $\{y\}$, then $\mu$ has a $(F,\rho)$-disintegration. The $\mu^y$ measures are
	uniquely determined up to an almost sure equivalence: if $\{\mu^y_*\}$ is another $(F,\rho)$-disintegration then $\rho(\{y\in \mathcal{Y}: \mu^y_*=\mu^y\})=0$.
\end{theorem}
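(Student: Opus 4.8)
The plan is to prove the theorem through the standard two-stage strategy for disintegration results: first reduce to the cleanest possible case by two successive normalisations, and then construct the fibre measures $\{\mu^y\}$ as \emph{regular conditional probabilities}, using the topological hypotheses on $X$ precisely to promote an a.e.-defined, merely finitely additive object into a genuine family of Borel measures.

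First I would reduce to the case of a probability measure disintegrated against its own pushforward. Since $\mu$ is $\sigma$-finite, choose a strictly positive Borel weight $w$ on $X$ with $\int_X w\, d\mu<\infty$, normalise so that $w\mu$ is a probability measure, and observe that a disintegration $\{\tilde\mu^y\}$ of $w\mu$ yields one of $\mu$ on multiplying each fibre measure by $w^{-1}$ (a purely $x$-dependent, hence fibrewise-legitimate, density). Likewise, since $\rho$ dominates $F_*\mu$ and both are $\sigma$-finite, we have $F_*\mu\ll\rho$ with Radon--Nikodym density $g=\tfrac{dF_*\mu}{d\rho}$; thus a $(F,F_*\mu)$-disintegration into probability measures $\{\nu^y\}$ converts into the required $(F,\rho)$-disintegration by setting $\mu^y=g(y)\,\nu^y$. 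Hence it suffices to build, for a probability measure $\mu$, a family of probability measures $\{\nu^y\}$ concentrated on the fibres $F^{-1}(y)$ and satisfying the measurability and integration identities of Definition \ref{def:disintegration} with $\rho=F_*\mu$. At this point I would also invoke tightness of the Radon measure $\mu$ to pass to a $\sigma$-compact, hence separable, carrier $S=\bigcup_n K_n$ on which $\mu$ is concentrated; this makes $\mathcal B(X)$ countably generated and furnishes a countable algebra $\mathcal A$ generating it. For each $A\in\mathcal A$ the finite measure $F_*(\mathbbm{1}_A\mu)$ is absolutely continuous with respect to $F_*\mu$, so it admits a density $q_A=\tfrac{dF_*(\mathbbm{1}_A\mu)}{dF_*\mu}$, a $\Sigma$-measurable function on $Y$ valued in $[0,1]$, and the identity
\begin{equation*}
\mu(A\cap F^{-1}(B))=\int_B q_A\, d(F_*\mu),\qquad B\in\Sigma,
\end{equation*}
forces, off a single $F_*\mu$-null set $N$ collecting the countably many a.e.\ relations coming from $\mathcal A$, that $y\mapsto q_{\,\cdot}(y)$ be a nonnegative, monotone, finitely additive set function on $\mathcal A$ with $q_X(y)=1$. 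Declaring $\nu^y(A):=q_A(y)$ for $y\notin N$ gives the proto-measures.

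The heart of the argument, and the step I expect to be the main obstacle, is upgrading each finitely additive $\nu^y$ on $\mathcal A$ to a countably additive Borel probability measure; this is exactly where the metric and Radon hypotheses are indispensable, since a finitely additive premeasure need not extend. Here I would exploit tightness: because $\mu(X\setminus K_n)\to 0$ we have $q_{X\setminus K_n}\to 0$ in $L^1(F_*\mu)$, so along a subsequence $q_{K_n}(y)\to 1$ for a.e.\ $y$, which is precisely the inner-compact-regularity needed to rule out escape of mass and hence, via continuity at $\emptyset$, to verify $\sigma$-additivity of $\nu^y$ on $\mathcal A$. Carath\'eodory extension then produces a genuine Radon probability measure $\nu^y$ on $\mathcal B(X)$ for a.e.\ $y$. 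Concentration on the fibre uses that $\Sigma$ contains singletons: taking $B=\{y\}$ in the identity above shows $\nu^y(F^{-1}(y))=1$ for a.e.\ $y$.

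It remains to record the two bookkeeping items and uniqueness. Measurability of $y\mapsto\nu^y(A)$ holds on $\mathcal A$ by construction and propagates to all Borel $A$, and thence to $y\mapsto\int f\, d\nu^y$ for bounded Borel $f$, by a monotone-class argument; the integration identity follows from the definition of $q_A$ first for indicators and then by linearity and monotone convergence, which recovers property (iii) of Definition \ref{def:disintegration}. Finally, for uniqueness, any two disintegrations $\{\mu^y\}$ and $\{\mu^y_*\}$ must each represent the density $q_A$ for every $A\in\mathcal A$, so $\mu^y(A)=\mu^y_*(A)$ off a common $\rho$-null set for all $A$ in the countable algebra $\mathcal A$, whence the two measures coincide $\rho$-almost everywhere.
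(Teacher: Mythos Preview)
The paper does not prove this theorem at all: it is quoted verbatim from \cite{chang1997conditioning} and used as a black box in the proof of Proposition~\ref{prop:disint}. So there is no ``paper's own proof'' to compare against; your outline is essentially the classical construction found in the cited reference, and the overall architecture (reduce to a probability measure, build Radon--Nikodym densities $q_A$ on a countable generating algebra, use tightness to upgrade finite to countable additivity, then extend) is correct and standard.

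That said, there is one genuine gap in your sketch. Your argument for concentration on the fibre reads: ``taking $B=\{y\}$ in the identity above shows $\nu^y(F^{-1}(y))=1$ for a.e.\ $y$.'' This does not work as written. Applying the identity $\mu(A\cap F^{-1}(B))=\int_B q_A\,d(F_*\mu)$ with $B=\{y\}$ is vacuous whenever $F_*\mu(\{y\})=0$, which is the generic situation. The correct argument requires the \emph{countable generation} hypothesis on $\Sigma$, not merely that singletons lie in $\Sigma$: for each generator $E_n$ one has $q_{F^{-1}(E_n)}=\mathbbm{1}_{E_n}$ $F_*\mu$-a.e., hence $\nu^y(F^{-1}(E_n))=\mathbbm{1}_{E_n}(y)$ for all $n$ off a single null set $N$. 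For $y\notin N$, a $\pi$--$\lambda$ argument pushes this to every $B\in\Sigma$; only then can you specialise to $B=\{y\}$ (now legitimately, since the null set no longer depends on $B$) to conclude $\nu^y(F^{-1}(y))=1$. The two hypotheses on $\Sigma$ play distinct roles here, and your sketch collapses them.
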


\begin{theorem}[{\cite[Thm.\ 2 (iii)]{chang1997conditioning}}]\label{thm:2ch}
    Let $\mu$ have a $(F,\rho)$-disintegration $\{\mu^y\}$, with $\mu$ and $\rho$ each $\sigma$-finite. Then, the measures $\{\mu^y\}$ are probabilities for $\rho$-almost all $y \in \mathcal{Y}$ if and only if $\rho = F_*\mu$.
\end{theorem}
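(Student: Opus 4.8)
The plan is to prove both implications at once by first extracting a single \emph{master identity} from the disintegration formula, and then reading off the equivalence directly. Write $g(y) := \mu^y(X)$ for the total mass of each fibre measure; saying that $\mu^y$ is a probability measure for $\rho$-almost every $y$ is exactly saying $g = 1$ $\rho$-a.e. Thus the whole statement reduces to the claim that $g = 1$ $\rho$-a.e.\ if and only if $\rho = F_*\mu$.

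First I would establish that, for every measurable $B \in \Sigma$,
\[
(F_*\mu)(B) = \int_B \mu^y(X) \, d\rho(y).
\]
To see this, apply the disintegration identity (property (iii) in Definition \ref{def:disintegration}, with $\rho$ in place of $F_*\mu$) to the non-negative Borel function $f = \mathbbm{1}_{F^{-1}(B)}$. The left-hand side becomes $\mu(F^{-1}(B)) = (F_*\mu)(B)$, by definition of the pushforward. For the right-hand side I would use that $\mu^y$ is concentrated on $F^{-1}(y)$ for $\rho$-almost every $y$ (property (i)): on that fibre the integrand $\mathbbm{1}_{F^{-1}(B)}$ equals the constant $\mathbbm{1}_B(y)$ up to a $\mu^y$-null set, so the inner integral collapses to $\mathbbm{1}_B(y)\,\mu^y(X)$. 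Integrating against $\rho$ then yields $\int_B \mu^y(X)\, d\rho(y)$, which is the master identity. The measurability of $g$ needed to make sense of this integral is precisely property (ii) applied to the constant function $f\equiv 1$.

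With the master identity in hand, both directions are immediate. If $\rho = F_*\mu$, the identity reads $\int_B 1 \, d\rho = \int_B g \, d\rho$ for every $B \in \Sigma$; since $\rho$ is $\sigma$-finite this forces $g = 1$ $\rho$-a.e., so each $\mu^y$ is a probability measure for $\rho$-a.e.\ $y$. Conversely, if $g = 1$ $\rho$-a.e., then the identity gives $(F_*\mu)(B) = \int_B 1 \, d\rho = \rho(B)$ for every $B$, whence $F_*\mu = \rho$. In effect the master identity exhibits $g$ as the Radon--Nikodym density $d(F_*\mu)/d\rho$, and the theorem says this density is $\rho$-a.e.\ constant equal to one exactly when the two measures coincide.

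The main subtlety I anticipate is the step collapsing the inner integral to $\mathbbm{1}_B(y)\,\mu^y(X)$. This requires upgrading the qualitative concentration statement (i) to the pointwise fact that $F = y$ holds $\mu^y$-a.e., which is valid only for $\rho$-almost every $y$; I would note that discarding a $\rho$-null set of $y$'s leaves all the $\rho$-integrals unchanged, so this is harmless. A second point needing care is the passage from $\int_B g \, d\rho = \rho(B)$ for all $B$ to $g = 1$ $\rho$-a.e.: here $\sigma$-finiteness of $\rho$ is used to exhaust $Y$ by sets of finite $\rho$-measure, on each of which $g$ is $\rho$-integrable, and then the standard argument $\int_B (g-1)\,d\rho = 0$ for all measurable $B$ rules out $\{g>1\}$ and $\{g<1\}$ having positive $\rho$-measure.
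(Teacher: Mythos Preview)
The paper does not prove this statement at all: Theorem~\ref{thm:2ch} is quoted verbatim from \cite[Thm.~2(iii)]{chang1997conditioning} and then invoked inside the proof of Proposition~\ref{prop:disint}, with no argument supplied. So there is no ``paper's own proof'' against which to compare your proposal.

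That said, your argument is correct and is essentially the standard one. The master identity $(F_*\mu)(B) = \int_B \mu^y(X)\,d\rho(y)$ follows exactly as you outline, by feeding $f = \mathbbm{1}_{F^{-1}(B)}$ into the disintegration formula and using concentration of $\mu^y$ on $F^{-1}(y)$ to collapse the inner integral. Both implications then read off immediately, and you have correctly flagged the two places where care is needed: the concentration property only holds for $\rho$-a.e.\ $y$ (harmless since you then integrate in $\rho$), and the deduction $g=1$ $\rho$-a.e.\ from $\int_B g\,d\rho = \rho(B)$ for all $B$ genuinely uses $\sigma$-finiteness of $\rho$ to avoid the pathology of $g$ being undetermined on a set of infinite measure. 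Your observation that $g$ is the Radon--Nikodym density $d(F_*\mu)/d\rho$ is a clean way to summarise the mechanism; note that $\sigma$-finiteness of $\mu$ is what guarantees $F_*\mu$ is $\sigma$-finite and hence that this density exists.
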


\begin{proof}[Proof of Proposition \ref{prop:disint}]	
	Let $d:= \max\{d_{\mathcal{X}},d_{\mathcal{Z}}\}$ be one of the standard metrics on the product $\mathcal{X} \times \mathcal{Z}$. The spaces $(\X,d_{\mathcal{X}})$ and $(\Z,d_{\mathcal{Z}})$ are separable, as they are second countable by Assumption \ref{a:sc}\ref{it:as1}, and complete by Assumption \ref{a:sc}\ref{it:as2}. Thus the product space $(\X \times \Z, d)$ is complete and separable, \cite{steen1978counterexamples} (pg. 26, Invariance properties, Table 1). Thus, $(\X \times \Z, d)$
	 is a Polish space. Since by Assumption \ref{a:measur} $\mu$ is a finite measure on the Polish space $\X\times\Z$, Theorem \ref{thm:polish} guarantees that $\mu$ is Radon. Moreover, since by Assumption \ref{a:measur} $\mu$ is finite, it is in particular $\sigma$-finite.
	Taking $\rho=F_*\mu$, clearly $\rho$ dominates $F_*\mu$ trivially, as they are the same measure. 
	By Assumption \ref{a:sc}\ref{it:as1}, $(\Mt^{\E},d_\Y)$ is second countable, hence its topology is countably generated, and so is the corresponding Borel $\sigma$-algebra $\mathcal{B}=\mathcal{B}(\Mt^\E)$. Furthermore, since singletons $\{y\} \subseteq \Mt^\E$ are closed, the Borel $\sigma$-algebra contains all singletons. Lastly, as the mapping $F$ is measurable by Assumption \ref{a:measur} and as by Assumption \ref{a:Mcompact} $\mathcal{M}_1 \times \mathcal{E}$ is a Borel subset of $\mathcal{X} \times \mathcal{Z}$, we extend $F$ trivially to be a measurable function on $\mathcal{X} \times \mathcal{Z}$, for instance by defining it to be constant outside $\mathcal{M}_1 \times \mathcal{E}$. Thus, all the conditions of Theorem \ref{thm:1ch} are satisfied. Hence, there exists a disintegration of the measure $\mu$ along $F$. Such a disintegration is essentially unique in the sense of Theorem \ref{thm:1ch}. Finally, since $\mu$ and $\rho=F_*\mu$ are both finite and in particular $\sigma$-finite, Theorem \ref{thm:2ch} guarantees that $\rho=F_*\mu$ implies that the measures $\mu^y$ are probability measures for $F_*\mu$-almost every $y \in \Mt^{\E}$.
\end{proof}
\subsection{Proof of Theorem \ref{thm:optimal_bounds_sup}}

	\begin{proof}[Proof of Theorem \ref{thm:optimal_bounds_sup}]
        We begin with the proof of (ii). First, let us introduce some notation: for fixed $y \in \Mt^\E$, define the function
		\begin{align*}
			f_y\colon \X \to [0,+\infty), \quad f_y(z) = \sup_{x \in F_y} d_\X(x, z) = d_\X^H(z, F_y)
		\end{align*}
        and notice that $f_y$ appears in the definition of $\Psi$ in \eqref{eq:opt_map_det_ex}. Note that $f_y$ does not attain the value $+\infty$, as by Assumption \ref{a:compact} the feasible set $F_y$ is bounded. That is, we have the relation,
        \begin{equation}\label{eq:psi_def123}
            \Psi(y)= \argmin_{z \in \X} f_y(z).
		\end{equation}
        Furthermore, for each $y \in \Mt^\E$ let 
		\begin{equation*}
            r_y = \sup_{\substack{x \in F_y\\x' \in F_y}} d_\X(x, x') = {\diam}_{d_\X}(F_y).
		\end{equation*}
      Next, we make a claim which, once it is established, we use to prove that $\Psi$ has non-empty, compact values. This ensures that $\Psi$ is well-defined.
        
        \begin{claim}
        For every $y \in \Mt^\E$, we have that,  
            \begin{enumerate}[label=(\Roman*)]
			\item $f_y$ is continuous,
            \item for any $x \in F_y$, we have that
                \[
                    \underset{z \in \X}{\argmin}f_y(z) =\underset{z \in {B}(x,r_y)}{\argmin}f_y(z),\] 
			\item $\Psi(y)$ is closed.
		\end{enumerate}
        \end{claim}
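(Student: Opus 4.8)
The three parts of the claim are essentially independent, so the plan is to treat them in the stated order, with part (I) carrying most of the content and being reused in part (III).

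For (I), the idea is that $f_y$ is a supremum of $1$-Lipschitz functions. For each fixed $(x,e)\in F_y$, the map $z\mapsto d_\X(x,z)$ is $1$-Lipschitz by the triangle inequality, and a pointwise supremum of $1$-Lipschitz functions over a non-empty index set is again $1$-Lipschitz as soon as it is finite at a single point. Finiteness is where Assumption~\ref{a:compact} enters: $F_y$ is bounded (and non-empty, since $y\in\Mt^\E$), so fixing $x_0\in F_y$ gives $f_y(z)\le d_\X(x_0,z)+\diam_{d_\X}(F_y)<\infty$ for all $z$. Hence $f_y\colon\X\to[0,\infty)$ is $1$-Lipschitz, in particular continuous.

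For (II), fix $x\in F_y$ and establish a two-sided estimate. Evaluating $f_y$ at $x$ itself gives $\inf_{z\in\X}f_y(z)\le f_y(x)=\sup_{(x',e')\in F_y}d_\X(x',x)\le\diam_{d_\X}(F_y)=r_y$. Conversely, if $z\notin B(x,r_y)$, i.e.\ $d_\X(x,z)>r_y$, then $f_y(z)\ge d_\X(x,z)>r_y\ge\inf_{z'\in\X}f_y(z')$, so $z$ is not a minimizer of $f_y$. Thus every minimizer of $f_y$ over $\X$ lies in $B(x,r_y)$; combined with $f_y(x)\le r_y$ and $x\in B(x,r_y)$, this yields $\inf_{B(x,r_y)}f_y=\inf_{\X}f_y$, and hence $\argmin_{z\in\X}f_y(z)=\argmin_{z\in B(x,r_y)}f_y(z)$. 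The remaining verification that the two infima coincide is a routine case distinction according to whether $\inf_{\X}f_y<r_y$ or $\inf_{\X}f_y=r_y$.

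For (III), simply write $\Psi(y)=\{z\in\X:f_y(z)=m\}=f_y^{-1}(\{m\})$ with $m:=\inf_{z\in\X}f_y(z)\in[0,\infty)$. Since $f_y$ is continuous by (I) and $\{m\}$ is closed in $[0,\infty)$, this preimage is closed; the argument remains valid, with $\Psi(y)=\emptyset$, if the infimum is not attained. I do not anticipate a substantive obstacle in any of the three parts — the only points needing care are the finiteness of $f_y$ and of $r_y$ and the non-emptiness of $F_y$, all handled by boundedness of $F_y$ together with $y\in\Mt^\E$. The real leverage of the claim comes afterwards: part (II) reduces the minimization defining $\Psi(y)$ to a closed bounded set, which by the Heine--Borel Assumption~\ref{a:sc}\ref{it:as2} is compact, so the continuous function $f_y$ attains its infimum there and its minimizer set is compact, giving that $\Psi(y)$ is non-empty and compact.
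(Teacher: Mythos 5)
Your proposal is correct and follows essentially the same route as the paper: (I) via the $1$-Lipschitz estimate for a supremum of distance functions over the bounded set $F_y$, (II) by showing points outside $B(x,r_y)$ have value strictly above $f_y(x)\le r_y$ and hence cannot minimize, and (III) as the preimage of a closed singleton under the continuous $f_y$. Your extra remark handling the case where the infimum is not attained (so that $\Psi(y)=\emptyset$ is still closed) is a slight refinement of the paper's wording, but not a different argument.
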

        We proceed to prove the claim and start by considering (I). We consider a general setting, and let $(X,d)$ be a metric space, $A \subseteq X$ be a bounded subset and $g(x)\coloneqq d^H(x,A) = \sup_{a \in A} d(x,a)$. We claim that $|g(x)-g(y)|\leq d(x,y)$ for all $x,y \in X$. To see this, consider $a, x, y \in X$ and note that  
			\begin{align*}
			d(x,a) \leq d(x,y) + d(y,a) \quad \implies \quad \sup_{a \in A}d(x,a) \leq d(x,y) + \sup_{a \in A} d(y,a).
			\end{align*}
        Switching the roles of $x$ and $y$, leads to the desired inequality. It follows that $g$ is continuous. Moreover, $ F_y$ is bounded by Assumption \ref{a:compact}. Thus, letting $(X,d)=(\X, d_\X)$, $A = F_y$, $g=f_y$ above, proves (I).

            To prove (II), we show that no point outside of the closed ball ${B}_{d_\X}(x,r_y)$ can be a minimiser of $f_y$. First, note that $r_y = {\diam}_{d_\X}(\pi_1 (F_y))<\infty$ as $\pi_1 (F_y)$ is bounded. Moreover, by definition, we have that
			\begin{align*}
			r_y = \sup_{x \in F_y} \sup_{x' \in F_y} d_\X(x, x') = \sup_{x \in F_y} f_y(x),
			\end{align*}            
            which implies that $r_y \geq f_y(x)$ for all $x \in F_y$.
            Now, pick an $\hat{x} \in F_y$. If $z\in \X \setminus {B}(\hat{x},r_y)$ is a point outside of the ball, then $d_\X(z,\hat{x}) > r_y$, and 
			\begin{equation*}
               f_y(z) = \sup_{x' \in F_y} d_\X(x',z) \geq d_\X(\hat{x},z) > r_y \geq f_y(\hat{x}).
			\end{equation*}
            It follows that any minimizer of $f_y$ must lie in the ball ${B}(\hat{x},r_y)$. This proves our claim in (II).
			
            Part (III) of the claim follows directly from the continuity of $f_y$. Indeed, let $\alpha_y \coloneqq \min f_y \in [0, +\infty)$. Then, since $\{\alpha_y\}\subset [0,\infty)$ is closed and $f_y$ is continuous, the preimage  $f_y^{-1}(\{\alpha_y\}) = \Psi(y)$ is closed. This proves (III), and concludes our proof of the claim.
		
    Next, we prove the following properties of $\Psi$. These properties  finalize  the proof of statement (ii) of the theorem. 	
        \begin{enumerate}[label=(\alph*)]
            \item \emph{$\Psi$ has non-empty and compact values};
			\item \emph{$\Psi$ is an optimal map};
		\end{enumerate}
	
    We start with the proof of (a).
            Let $y \in \Mt^{\E}$ and consider $x \in F_y$. From (II) and \eqref{eq:psi_def123}, we see that 
                \begin{equation}\label{eq:minargpsi}
                    \Psi(y) = \argmin_{z \in \mathcal{X}}f_y(z) = \argmin_{z \in {B}(x,r_y)} f_y(z).
				\end{equation}
                By Assumption \ref{a:sc}\ref{it:as2}, $d_\X$ satisfies the Heine-Borel property, which  implies that the closed and bounded ball $\mathcal{B}(x,r_y)$ is compact. Moreover, from (I) we know that the objective function $f_y$ is continuous. Thus, it follows from the Extreme Value Theorem that $f_y$ attains its minimum in \eqref{eq:minargpsi}. This implies that $\Psi$ has non-empty values. To see that $\Psi$ has compact values, observe that $\Psi(y)$ is closed by (III) and that $\Psi(y) \subseteq {B}(x,r_y)$, where ${B}(x,r_y)$ is compact. Since a closed subset of a compact set is compact, we conclude that $\Psi(y)$ is compact.

Next, we prove (b). We will show that the minimum worst-case reconstruction error of $\Psi$ equals the optimality constant as given in Definition \ref{eq:mapconstn3}. To this end, let $\varphi\colon \Mt^\E \rightrightarrows \X$ and fix $y \in \Mt^{\E}$. Recall from above that $\Psi$ is non-empty. By construction we have that
\begin{equation*}
    \sup_{x \in F_y} d_\X^H(\Psi (y),x) \leq \sup_{x \in F_y} d_\X(x,z),\quad\text{for all $z \in \varphi(y)$.}
			\end{equation*}
            It follows that
			\begin{align*}
			\sup_{x \in F_y} d_\X^H(\Psi (y),x) \leq  \sup_{x \in F_y} \sup_{z \in \varphi(y)} d_\X(x,z)   =  \sup_{x \in F_y} d_\X^H(x,\varphi(y)).
			\end{align*}
			By taking the supremum with respect to $y\in\Mt^{\E}$ on both sides, we obtain
			\begin{align*}
			\sup_{y \in \Mt^{\E}} \sup_{x \in F_y}d_{\X}^{H} (\Psi(y), x) \leq \sup_{y \in \Mt^{\E}} \sup_{x \in F_y} d_{\X}^{H}(\varphi(y), x).
			\end{align*}
			As $F\colon \Mo\times\E \to \Mt^\E$ is surjective, the previous inequality can be rewritten as
			\begin{align*}
			\sup_{(x,e)\in \Mo \times \E} d_{\X}^{H} (\Psi(F(x,e)), x) \leq \sup_{(x,e)\in \Mo \times \E} d_{\X}^{H} (\varphi(F(x,e)), x).
			\end{align*}
            Now, since $\varphi\colon \Mt^\E \rightrightarrows \X$ was arbitrary, the above inequality holds for any $\varphi\colon \Mt^\E \rightrightarrows \X$.  Taking the infimum over all mappings on the right hand, side we obtain the optimality constant:
			\begin{align*}
			\sup_{(x,e)\in \Mo \times \E} d_{\X}^{H} (\Psi(F(x,e)), x) \leq c_{\mathrm{opt}}^{\text{w}}(F,\Mo,  \E). 
			\end{align*}
			The opposite inequality is trivial, since $\Psi \colon \Mt^\E \rightrightarrows \X$ is one of the reconstruction mappings over which the infimum is taken. Therefore,
			\begin{align*}
			\sup_{(x,e)\in \Mo \times \E} d_{\X}^{H} (\Psi(F(x,e)), x) = c_{\mathrm{opt}}^{\text{w}}(F,\Mo,  \E),
			\end{align*}
			and we conclude that $\Psi$ is an optimal map.
This concludes the proof of statement (ii) in the theorem.

    We proceed with the proof of (i) and start with the lower bound in \eqref{eq:bounds_kersize}.
    Let $\varphi:\Mt^{\E} \rightrightarrows\X$ and $y \in \Mt^{\E}$. Then 
    \begin{align*}
        {\diam}_{d_\X}(F_y) = \sup_{x,x' \in F_y} d_\X(x,x') &\leq
\sup_{x \in F_y} 2d_\X^H(x, \varphi(y)).
    \end{align*}
   Now, taking the supremum over all $y \in \Mt^{\E}$ gives the inequality 
    \begin{align}\label{eq:lower_bound}
        \operatorname{kersize}^{\text{w}}(F,\Mo, \E)\leq \sup_{y \in \Mt^{\E}}  \sup_{x \in F_y} 2d_\X^H(x, \varphi(y)).
	\end{align}
Finally, since $\varphi$ was arbitrary, taking the infimum over all $\varphi:\Mt^\E \rightrightarrows \X$ in \eqref{eq:lower_bound} gives
	\begin{equation*}
		\operatorname{kersize}^{\text{w}}(F,\Mo, \E) \leq 2c_{\mathrm{opt}}^{\text{w}}(F,\Mo,  \E).
	\end{equation*}
which proves the lower bound in \eqref{eq:bounds_kersize}.

For proving the upper bound in \eqref{eq:bounds_kersize}, we will make use of the mapping $\Psi$ in \eqref{eq:opt_map_det_ex}, which we know from statement (ii) is an optimal map. Let $y \in \Mt^{\E}$ and $x' \in F_y$. Then by construction of $\Psi$, we have that     
	\begin{equation*}
        \sup_{x \in F_y} d_\X^H(\Psi (y),x) \leq \sup_{x \in F_y} d_\X(x,x')\leq \diam(F_y).
	\end{equation*}
Taking the supremum over all $y \in \Mt^{\E}$ on both sides above, gives 
	\begin{equation*}
         \sup_{y \in \Mt^{\E}}   \sup_{x \in F_y} d_\X^H(\Psi (y),x) =c_{\mathrm{opt}}^{\text{w}}(F, \mathcal{M}_1, \mathcal{E})  \leq  \operatorname{kersize}^{\text{w}}(F,\Mo, \E),
	\end{equation*}
    where we used the fact that $\Psi$ is an optimal map in the first equality. This establishes statement (i).    

\end{proof}

\subsection{Preliminaries from measure theory}\label{subs:preliminaries}

In order to prove Theorem \ref{thm:optimal_bounds} we need to state and cite some existing results from measure theory and prove a preliminary lemma.

\subsubsection{Carathéodory functions}
Let $(S,\Sigma_{S}), (X,\Sigma_{X})$ and $(Y,\Sigma_{Y})$ be measurable spaces. For functions $f\colon S\times X \to Y$, whose domain is a product of two measurable spaces, several notions of measurability exist. The function $f$ could be \emph{jointly measurable}, i.e., measurable with respect to the product $\sigma$-algebra $\Sigma_{S}\otimes \Sigma_{X}$. For fixed $s \in S$ or $x \in X$, the functions $f^s = f(s,\cdot) \colon X \to Y$ and $f^x = f(\cdot,x) \colon S \to Y$, could be measurable with respect to $\Sigma_{X}$ and $\Sigma_{S}$, respectively. A function $f$ which is measurable in one variable for each fixed $s \in S$ and for each fixed $x \in X$ is said to be \emph{separately measurable}. In general, joint measurability implies separate measurability, but the converse is not true \cite[p. 152]{guide2006infinite}.         

A class of functions which are jointly measurable in many important cases are Carathéodory functions.
\begin{definition}[{\cite[Def.\ 4.50]{guide2006infinite}}]
    Let $(S,\Sigma)$ be a measurable space, and let $X$ and $Y$ be topological spaces. A function $ f\colon S\times X \to Y$ is a \emph{Carathéodory function} if:
\begin{enumerate}[label=(\roman*)]
    \item for each $x \in X$, the function $f^{x} = f(\cdot,x)\colon S\to Y$ is $(\Sigma, \mathcal{B}(Y))$-measurable, and
    \item for each $s \in S$, the function $f^{s}=f(s,\cdot) \colon X\to Y$ is continuous. 
\end{enumerate}
\end{definition}
In particular, if $X$ is a separable metric space and $Y$ is a metric space, then every Carathéodory function $f\colon S\times X \to Y$ is jointly measurable \cite[Lem.\ 4.51]{guide2006infinite}. It is also straightforward to see that if $f$ is continuous in both arguments separately, then $f$ is a  Carathéodory function.

\subsubsection{Measurability of set-valued functions}

For a single-valued function $f \colon Y\to X$ the inverse image of a set $A \subset X$ is $f^{-1}(A) \coloneqq \{y \in Y \colon f(y) \in A\}$. For a set-valued function $\varphi\colon Y \rightrightarrows X$ there are different ways to generalize the concept of an inverse image: Given a set $A \subset X$, we say that the \emph{upper inverse} of $A$ is $\varphi^{\mathrm{u}}(A) \coloneqq \{y \in Y :\varphi(y) \subset A\}$ and we say that the \emph{lower inverse} of $A$ is
\[\varphi^{\ell}(A) \coloneqq\{y \in Y :\varphi(y) \cap A\neq \emptyset\}.\]
Naturally, the notion of continuity of set-valued mappings between topological spaces depends on the definition of inverse. Interested readers are referred to \cite[Ch.\ 17]{guide2006infinite}. In this work we shall be most concerned with the lower inverse of a set-valued mapping, since this notion of inverse allows us to pick a measurable selector. 
Next, we define two different notions of measurability for set-valued mappings based on the concept of lower inverse.
\begin{definition}[{\cite[Def.\ 18.1]{guide2006infinite}}]\label{def:measurability}
	 Let $(Y, \Sigma)$ be a measurable space and $X$ a topological space. 
	We say that a set-valued mapping $\varphi: Y \rightrightarrows X$ is:
    \begin{itemize}
        \item 
	 	\emph{weakly measurable}, if 
	 	$\varphi^{\ell}(V) \in \Sigma$ for each open subset $V$ of $X$,
        \item \emph{measurable}, if $\varphi^{\ell}(A)\in \Sigma$ for each closed subset $A$ of $X$. 
    \end{itemize}
\end{definition}
\noindent Importantly, one needs to remark here, that the definition of weakly measurable for set-valued mappings in \cite[Def.\ 18.1]{guide2006infinite} is equivalent to that of measurable set-valued mappings \cite[Def.\ 8.1.1. (i),(ii)]{aubin2009set}. Additionally, we have that in some cases measurability implies weak measurability for set-valued mappings.
\begin{lemma}[{\cite[Lem.\ 18.2]{guide2006infinite}}]\label{lem:equiv} Let $\varphi : Y \rightrightarrows X$ be a set-valued mapping from a measurable space $(Y,\Sigma)$ into a metric space. Then the following hold:
\begin{enumerate}
    \item If $\varphi$ is measurable, then it is also weakly measurable.
    \item If $\varphi$ is compact-valued and weakly measurable, then it is measurable.
\end{enumerate}
\end{lemma}
\noindent Note that if the topology of $X$ above is induced by a metric, then every set-valued measurable mapping is also weakly measurable \cite[Lem.\ 18.2]{guide2006infinite}. Our main tool for proving Theorem \ref{thm:optimal_bounds} is the following result.

\begin{theorem}[{Measurable Maximum Theorem, \cite[Thm.\ 18.19]{guide2006infinite}}]\label{thm:mmt}
	Let $X$ be a separable metric space and $(S,\Sigma)$ a measurable space. Let $\varphi\colon S \rightrightarrows X$ be weakly measurable with non-empty compact values, and suppose $f\colon S \times X \to \R$ is a Carathéodory function. Define the value function $m\colon S \to \R$ by
	\begin{equation*}
		m(s) = \max_{x \in \varphi(s)} \ f(s,x),
	\end{equation*}
	and the correspondence $\Phi\colon S \rightrightarrows X$ of maximisers by
	\begin{equation*}
		\Phi(s) = \{x \in \varphi(s) \ : \ f(s,x) = m(s)\}=\operatorname*{argmax}_{x \in \varphi(s)} f(s,x).
	\end{equation*}
	Then $m$ is measurable, $\Phi$ has non-empty and compact values, and $\Phi$ is measurable and admits a measurable selector.
\end{theorem}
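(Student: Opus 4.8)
The plan is to establish the four assertions --- measurability of the value function $m$, non-emptiness and compactness of the values of $\Phi$, measurability of $\Phi$, and existence of a measurable selector --- in that order, with a Castaing representation of $\varphi$ as the central device. First I would record the pointwise facts. Fix $s \in S$. Since $f$ is Carath\'eodory, $f(s,\cdot)$ is continuous, and since $\varphi(s)$ is non-empty and compact, the Extreme Value Theorem guarantees that the supremum defining $m(s)$ is attained; hence $m(s)$ is finite and $\Phi(s) \neq \emptyset$. Moreover $\Phi(s) = \varphi(s) \cap (f(s,\cdot))^{-1}(\{m(s)\})$ is a closed subset of the compact set $\varphi(s)$, so it is compact. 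This settles the pointwise claims.

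Next I would upgrade weak measurability to measurability for $\varphi$, exploiting compact values. For a closed $C \subseteq X$ write $V_n = \{x \in X : \operatorname{dist}(x,C) < 1/n\}$, so each $V_n$ is open and $\bigcap_n V_n = C$. Using compactness of $\varphi(s)$ one checks that $\varphi(s) \cap C \neq \emptyset$ if and only if $\varphi(s) \cap V_n \neq \emptyset$ for every $n$ (extract a convergent subsequence of points $x_n \in \varphi(s)\cap V_n$ and use that $\operatorname{dist}(\cdot,C)$ is continuous and $C$ closed). Hence $\varphi^\ell(C) = \bigcap_n \varphi^\ell(V_n) \in \Sigma$, so $\varphi$ is measurable. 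Then, since $X$ is separable metrisable (passing, if necessary, to its completion, which is Polish and in which the compact values remain closed), the Castaing representation theorem yields a countable family $\{g_n\}$ of measurable selectors with $\varphi(s) = \overline{\{g_n(s) : n \in \N\}}$ for all $s$. Because $f$ is Carath\'eodory and $X$ is separable metrisable, $f$ is jointly measurable, so each $s \mapsto f(s,g_n(s))$ is measurable; density of $\{g_n(s)\}$ in $\varphi(s)$ together with continuity of $f(s,\cdot)$ gives $m(s) = \sup_n f(s,g_n(s))$, a countable supremum of measurable functions. Thus $m$ is measurable.

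The measurability of $\Phi$ is the main obstacle, and I would reduce it to measurability of a restricted value function. For a fixed closed $C \subseteq X$ set $S_C = \varphi^\ell(C) \in \Sigma$ and consider $\varphi_C = \varphi \cap C$ on $S_C$. The key observation is that for every closed $D \subseteq X$ one has $\varphi_C^\ell(D) = \varphi^\ell(C \cap D)$, and since $C \cap D$ is closed and $\varphi$ is measurable, $\varphi_C$ is a measurable, hence weakly measurable, compact-valued correspondence with non-empty values on $S_C$. Applying the Castaing representation to $\varphi_C$ and repeating the previous argument shows that $m_C(s) \coloneqq \max_{x \in \varphi(s) \cap C} f(s,x)$ is measurable on $S_C$. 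Since $m_C(s) \leq m(s)$ always, with equality precisely when some global maximiser lies in $C$, one obtains $\Phi^\ell(C) = \{s \in S_C : m_C(s) = m(s)\} \in \Sigma$; as $C$ was an arbitrary closed set, $\Phi$ is measurable.

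Finally, having shown $\Phi$ is measurable --- hence weakly measurable --- with non-empty closed (indeed compact) values in a separable metrisable space, I would invoke the Kuratowski--Ryll-Nardzewski selection theorem (again passing to the completion so the target is Polish) to obtain a measurable selector of $\Phi$, whose values automatically lie in $X$ because $\Phi(s) \subseteq X$; equivalently, any member of a Castaing representation of $\Phi$ serves as such a selector. The delicate points to watch are the compactness argument converting weak measurability into measurability and the legitimacy of the completion step in the representation and selection theorems; every remaining step is a routine verification.
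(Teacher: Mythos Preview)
The paper does not prove this theorem; it is quoted verbatim from \cite[Thm.\ 18.19]{guide2006infinite} and used as a black box in the proofs of Proposition~\ref{prop:C} and Proposition~\ref{prop:well-defined}. There is therefore no ``paper's own proof'' to compare against.

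That said, your proposal is a correct and essentially standard proof of the Measurable Maximum Theorem, following the same architecture as the one in Aliprantis--Border: upgrade weak measurability of $\varphi$ to measurability via compact values, obtain a Castaing representation $\{g_n\}$, use joint measurability of Carath\'eodory $f$ to write $m=\sup_n f(\cdot,g_n(\cdot))$, and then identify $\Phi^\ell(C)=\{s\in \varphi^\ell(C):m_C(s)=m(s)\}$ for closed $C$ by applying the same reasoning to the restricted correspondence $\varphi\cap C$. The completion trick (embedding the separable metrisable $X$ into its Polish completion, where compact values remain closed) is exactly how one reduces to the hypotheses of the Castaing and Kuratowski--Ryll-Nardzewski theorems; this is legitimate because lower inverses of open sets agree under the embedding and the selectors automatically land in $X$. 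Your sketch is sound and nothing essential is missing.
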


\noindent Note that for real-valued functions $\min_{x \in X} f(x) = \max_{x \in X} (-f(x))$ and the Measurable Maximum Theorem can be applied to minimisation problems as well. The Measurable Maximum Theorem will be applied in two ways. The first can be found in Proposition \ref{prop:C}, which gives sufficient conditions for a map $\phi\colon \Mt^\E \rightrightarrows \X$ to be in the set $\mathcal{C}$ given by \eqref{eq:C_def}. The second use of the theorem above is found in the proof of part (ii) and (iii) of Theorem \ref{thm:optimal_bounds}.

\begin{proposition}\label{prop:C}
	If $\phi: \Mt^\E \rightrightarrows \X$ is measurable and has non-empty compact values, then $\phi \in \mathcal{C}.$
\end{proposition}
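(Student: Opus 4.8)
The plan is to recognise the residual map $r_\phi$ as a value function of the type handled by the Measurable Maximum Theorem (Theorem \ref{thm:mmt}), applied to the composition of $\phi$ with the forward map $F$.

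First I would simplify the Hausdorff distance appearing in $r_\phi$. Since $\phi(F(x,e))$ is a non-empty compact subset of $\X$, the distance from the point $x$ to this set is
\[
r_\phi(x,e) = d_\X^H\bigl(x,\phi(F(x,e))\bigr) = \sup_{z\in\phi(F(x,e))} d_\X(x,z) = \max_{z\in\phi(F(x,e))} d_\X(x,z),
\]
the supremum being attained because $z\mapsto d_\X(x,z)$ is continuous and $\phi(F(x,e))$ is compact; in particular $r_\phi$ takes values in $[0,\infty)$.

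Next I would set up the hypotheses of Theorem \ref{thm:mmt} with measurable space $S = \bigl(\Mo\times\E,\ \mathcal{B}(\Mo\times\E)\bigr)$ and separable metrisable space $X=\X$ (separable by Assumption \ref{a:sc}\ref{it:as1}). Define the multi-valued map $\varphi\colon\Mo\times\E\rightrightarrows\X$ by $\varphi(x,e)=\phi(F(x,e))$ and the function $f\colon(\Mo\times\E)\times\X\to\R$ by $f\bigl((x,e),z\bigr)=d_\X(x,z)$. The map $f$ is continuous in each argument separately, hence a Carathéodory function, and the associated value function is $m(x,e)=\max_{z\in\varphi(x,e)}f((x,e),z)=r_\phi(x,e)$. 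Thus, once we know that $\varphi$ has non-empty compact values and is weakly measurable, Theorem \ref{thm:mmt} yields that $m=r_\phi$ is Borel measurable, i.e.\ $\phi\in\mathcal{C}$, as desired.

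The one substantive step --- and the main obstacle --- is verifying weak measurability of $\varphi$. That $\varphi$ has non-empty compact values is immediate, since $\phi$ does and $F$ maps $\Mo\times\E$ into the domain $\Mt^\E$ of $\phi$. For weak measurability, fix an open set $V\subseteq\X$ and observe that
\[
\varphi^\ell(V)=\{(x,e)\in\Mo\times\E:\phi(F(x,e))\cap V\neq\emptyset\}=\{(x,e)\in\Mo\times\E:F(x,e)\in\phi^\ell(V)\}.
\]
Since $\X$ is metrisable and $\phi$ is measurable, $\phi$ is weakly measurable by \cite[Lem.\ 18.2]{guide2006infinite}, so $\phi^\ell(V)\in\mathcal{B}(\Mt^\E)$; writing $\phi^\ell(V)=B\cap\Mt^\E$ for some $B\in\mathcal{B}(\Y)$ (the Borel $\sigma$-algebra of $\Mt^\E$ being the subspace one) and using that $F(x,e)\in\Mt^\E$ for every $(x,e)\in\Mo\times\E$, the set above equals $F^{-1}(B)\cap(\Mo\times\E)$. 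Now $F^{-1}(B)\in\mathcal{B}(\X\times\Z)$ because $F$ is Borel measurable by Assumption \ref{a:measur}, and $\Mo\times\E\in\mathcal{B}(\X\times\Z)$ because $\Mo$ is compact, hence closed, and $\E$ is Borel by Assumption \ref{a:Mcompact}. Hence $\varphi^\ell(V)\in\mathcal{B}(\Mo\times\E)$, so $\varphi$ is weakly measurable, which completes the argument.
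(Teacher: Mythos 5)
Your proposal is correct and follows essentially the same route as the paper's proof: apply the Measurable Maximum Theorem with $\varphi=\phi\circ F$ and the Carath\'eodory function $f((x,e),z)=d_\X(x,z)$, identifying the resulting value function with $r_\phi$. The only difference is that you spell out the weak measurability of $\phi\circ F$ via lower inverses, where the paper simply asserts measurability of the composition; your added detail is compatible with, and slightly more careful than, the paper's argument.
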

\begin{proof}
	By definition of $\mathcal{C}$, we need to prove that the function
	\begin{align*}
	r_\phi: \Mo \times \E \to \R, \quad r_\phi(x,e) = d_{\X}^H (x, \phi(F(x,e))) = \sup_{z \in \phi(F(x,e))} d_\X(x,z)
	\end{align*}
	is measurable. Denote $S := \Mo \times \E$, $X := \X$, and 
	\begin{align*}
	&\varphi := \phi \circ F: \Mo\times \E \rightrightarrows \X, & \varphi(x,e) = \phi(F(x,e)),  \\
	&f := d_\X \circ (\pi_1, \text{id}_{\X}): (\Mo \times \E) \times \X \to \R, &f((x,e), z) = d_\X(x,z).
	\end{align*} 
    We now show that the set-valued mapping $\varphi := \phi \circ F: \Mo\times \E \rightrightarrows \X$ is weakly measurable. Let $V \subset \mathcal{X}$ be an open set. Then, by the definition \cite[$\S$\ 17.1]{guide2006infinite} of the lower inverse $\varphi$ is given by
\begin{align*}
\varphi^\ell(V)
= \{(x,e) \in \mathcal{M}_1 \times \mathcal{E} : \varphi(x,e) \cap V \neq \emptyset \} 
= \{(x,e) \in \mathcal{M}_1 \times \mathcal{E} : \phi(F(x,e)) \cap V \neq \emptyset \}.
\end{align*}
Observe that this set can be written as $\varphi^\ell(V) = F^{-1}(\phi^\ell(V)),$
where $\phi^\ell(V) = \{y \in \mathcal{M}_2^{\mathcal{E}} : \phi(y) \cap V \neq \emptyset\}$ denotes the lower inverse of $V$ under $\phi$. Since $\phi$ is a set-valued mapping that is measurable, by Lemma~\ref{lem:equiv} it follows that $\phi$ is also weakly measurable. As $\phi: \Mt^\E \rightrightarrows \X$ is weakly measurable and as $V \subset \mathcal{X}$ is an open set, we have $\phi^\ell(V) \in \mathcal{B}(\mathcal{M}_2^{\mathcal{E}})$. Moreover, $F\colon (\Mo\times\E, \mathcal{B}(\Mo\times\E)) \to (\Mt^\E, \mathcal{B}(\Mt^\E))$ is Borel measurable by Assumption \ref{a:measur}. Therefore, the preimage of the Borel set $\phi^\ell(V) \in \mathcal{B}(\mathcal{M}_2^{\mathcal{E}})$ under $F$ is Borel measurable
\[
\varphi^\ell(V) = F^{-1}(\phi^\ell(V)) \in \mathcal{B}(\mathcal{M}_1 \times \mathcal{E}).
\]
Hence, the set-valued mapping $\varphi: \Mo\times \E \rightrightarrows \X$ is weakly measurable. Moreover, $\varphi: \Mo\times \E \rightrightarrows \X$ has non-empty compact values because $\phi: \Mt^\E \rightrightarrows \X$ has non-empty compact values by assumption. By Assumption \ref{a:measur} $(\Mo\times \E, \mathcal{B}(\Mo\times \E), \mu)$ is a $\sigma-$finite measure space. The space $(\X,d_{\mathcal{X}})$ is separable, as it is second countable by Assumption \ref{a:sc}\ref{it:as1}, and together with Assumption \ref{a:sc}\ref{it:as2} it is also complete. Finally, the function $f$ is continuous in both arguments and hence Carathéodory. Thus, the assumptions in Theorem \ref{thm:mmt} are satisfied, and its application implies that the value function $m: \Mo \times \E\to \R$
	\begin{align*}
	m(x,e) = \max_{z \in \phi(F(x,e))} d_\X(x,z)
	\end{align*}
	is measurable. It is immediate to notice that $m = r_\phi$, so we conclude that $r_\phi$ is measurable. Hence we have proven that $\phi \in \mathcal{C}$.
\end{proof}

\subsubsection{Disintegration of measures}

Next we prove a useful proposition that will be applied in the proof of Theorem \ref{thm:optimal_bounds}. For completeness, we start by recalling that every time we write an expression of the form
	\begin{align*}
\essup_{(x,e) \in \Mo \times \E} f(x,e), \quad \quad	\essup_{(x,e) \in F^{-1}(y)} f(x,e),  \quad\quad	\essup_{y \in \Mt^\E} f(y).
	\end{align*}
	Whenever it is clear from context, it is implicit that we are taking the essential supremum with respect to $\mu$ on $\Mo \times \E$, with respect to $\mu^y$ on $F_y$, and with respect to $F_*\mu$ on $\Mt^\E$. We also recall that $\pi_1\colon\X\times\Z\to\X$ refers to the projection on the first component $\pi_1(x,e)=x$.

\begin{proposition}\label{prop:disintegration_properties}
	Given Assumptions \ref{a:sc}, \ref{a:measur} and \ref{a:Mcompact}, we have the following:
	\begin{enumerate}[label=(\roman*)]
		\item For $A \in \mathcal{B} (\Mo\times\E)$, $\mu(A) =0$ if and only if $\mu^y(A) = 0 $
		for almost every $y \in \Mt^{\E}$.

    \item Let $f: \Mo\times \E\to [0,+\infty)$ be Borel measurable. Then the function
		\begin{align*}
		m\colon \Mt^{\E} \to \R, \quad m(y) = \essup_{(x,e) \in F^{-1}(y)} f(x,e)
		\end{align*}
		is Borel measurable.
    \item Let $f: \Mo\times \E \to [0,+\infty)$ be Borel measurable. Then
		\begin{align*}
		\essup_{(x,e) \in \Mo\times\E} f(x,e) = \essup_{y \in \Mt^\E} \essup_{(x,e) \in F^{-1}(y)} f(x,e).
		\end{align*}
    \item Let $g: \Mo \to [0,+\infty)$ be Borel measurable. Then
		\begin{align*}
		\int_{F^{-1}(y)} g(\pi_1(x,e))d\mu^y(x,e) = \int_{F_y} g(x) d\big(\pi_{1*}\mu^y\big)(x),
		\end{align*}
		and
	\begin{align*}
		\mu^y-\essup_{(x,e) \in F^{-1}(y)} g(\pi_1(x,e)) = \pi_{1*}\mu^y-\essup_{x\in F_y} g(x),
	\end{align*}
    where $\mu^y-\essup$, respectively $\pi_{1*}\mu^y-\essup$, denote the essential supremum with respect to the disintegration of measure $\mu^y$ and, respectively, the pushforward of the disintegration of measure $\pi_{1*}\mu^y$.
	\end{enumerate} 
\end{proposition}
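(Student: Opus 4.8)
\textbf{Proof plan for Proposition \ref{prop:disintegration_properties}.}

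The plan is to treat the four items in increasing order of sophistication, leaning on the defining property (iii) of the disintegration (Definition \ref{def:disintegration}) and on the Measurable Maximum Theorem (Theorem \ref{thm:mmt}) already recorded above, in the same way it was used in Proposition \ref{prop:C}. For item (i), I would apply property (iii) of the disintegration to the function $f=\mathbbm{1}_A$. This gives $\mu(A)=\int_{\Mt^\E}\mu^y(A\cap F^{-1}(y))\,d(F_*\mu)(y)$, and since $\mu^y$ is concentrated on $F^{-1}(y)$ for $F_*\mu$-a.e.\ $y$ we have $\mu^y(A\cap F^{-1}(y))=\mu^y(A)$ almost everywhere. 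So $\mu(A)$ is the integral of the nonnegative measurable function $y\mapsto\mu^y(A)$ against $F_*\mu$; this integral is zero if and only if the integrand vanishes $F_*\mu$-almost everywhere, which is exactly the claim. One subtlety worth a sentence: I should note that $\mu^y$ being a measure on $F_y=\pi_1(F^{-1}(y))\cap\Mo$ versus on $F^{-1}(y)$ is a harmless identification here, since $\mu^y$ lives on $\Mo\times\E$ and is concentrated on $F^{-1}(y)$, and $\mu$ is supported on $\Mo\times\E$ by Assumption \ref{a:measur}.

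For item (ii), the measurability of $y\mapsto\essup_{(x,e)\in F_y}f(x,e)$, the natural route is the Measurable Maximum Theorem. The obstacle is that $\essup$ is a genuine essential supremum with respect to $\mu^y$, not an honest supremum over the compact set $F_y$, so Theorem \ref{thm:mmt} does not apply verbatim. I would address this by first reducing to a true maximum. One clean way: since $f$ is Borel measurable and nonnegative, write $m(y)=\inf\{t\ge 0:\mu^y(\{(x,e)\in F_y:f(x,e)>t\})=0\}$, and express this through the measurable (in $y$) functions $y\mapsto\mu^y(\{f>t\})$ — these are measurable by item (ii) of Definition \ref{def:disintegration} applied to $\mathbbm{1}_{\{f>t\}}$ — then observe $m(y)=\inf\{t\in\mathbb{Q}_{\ge 0}:\mu^y(\{f>t\})=0\}$ is a countable infimum of measurable functions (after checking the rationals suffice by right-continuity of $t\mapsto\mu^y(\{f>t\})$ from one side / an $\e$-argument), hence measurable. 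Alternatively, and perhaps more in the spirit of the paper's toolkit, one can approximate $f$ from below by continuous functions and combine with compactness of $F_y$ (which holds since $\Mo$ is compact by Assumption \ref{a:Mcompact} and $F^{-1}(y)$ is closed) plus Theorem \ref{thm:mmt} applied to the multi-valued map $y\rightrightarrows F_y$ — but the measurability of $y\rightrightarrows F_y$ would itself need justification, so I expect the level-set description above to be the shorter path.

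Item (iii), the tower property for essential suprema, is the step I expect to require the most care, and it is the main obstacle. The inequality $\essup_{\Mo\times\E}f\ge\essup_{y}\essup_{F_y}f$ is the easier direction: if $M:=\essup_{\Mo\times\E}f$, then $\mu(\{f>M\})=0$, so by item (i) $\mu^y(\{f>M\})=0$ for a.e.\ $y$, whence $\essup_{F_y}f\le M$ for a.e.\ $y$, and taking $\essup$ over $y$ gives the bound. For the reverse inequality, set $L:=\essup_{y}\essup_{F_y}f$ and let $g(y):=\essup_{(x,e)\in F_y}f(x,e)$, which is measurable by item (ii). By definition of $L$, $g(y)\le L$ for $F_*\mu$-a.e.\ $y$; for each such $y$, $\mu^y(\{f>L\})\le\mu^y(\{f>g(y)\})=0$. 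Then apply property (iii) of the disintegration to $\mathbbm{1}_{\{f>L\}}$ to get $\mu(\{f>L\})=\int_{\Mt^\E}\mu^y(\{f>L\})\,d(F_*\mu)(y)=0$, so $\essup_{\Mo\times\E}f\le L$. The one technical point to handle is the $\infty$ case (if $L=+\infty$ there is nothing to prove, and if $M=+\infty$ one runs the argument with truncations $f\wedge n$), and the null-set bookkeeping — the set of bad $y$ must be $F_*\mu$-null, which is fine since it is a countable union (over $n$, with $L$ replaced by $L+1/n$) of null sets.

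Finally, item (iv) is essentially a change-of-variables / pushforward identity: for the integral equality, $\int_{F_y}g(\pi_1(x,e))\,d\mu^y(x,e)=\int_\X g\,d((\pi_1)_*\mu^y)$ is just the definition of the pushforward measure applied to the nonnegative Borel function $g$ (standard, via simple functions). For the essential-supremum equality, I would argue that $\mu^y(\{(x,e):g(\pi_1(x,e))>t\})=\mu^y(\pi_1^{-1}(\{g>t\}))=((\pi_1)_*\mu^y)(\{g>t\})$, so the level sets defining the two essential suprema have the same measure for every $t$, and hence the two $\essup$'s coincide; one should note that $\{g>t\}$ is Borel in $\X$ since $g$ is Borel, so $\pi_1^{-1}(\{g>t\})$ is Borel in $\X\times\Z$, keeping everything measurable. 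I do not anticipate difficulty here beyond being careful that the essential suprema in the statement are taken against $\mu^y$ on the left and against $(\pi_1)_*\mu^y$ on the right (the latter is what $\essup_{x\in F_y}$ should be read to mean, consistently with how the average kernel size and Proposition \ref{prop:Bayes} use the pushforward).
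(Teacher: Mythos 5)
Your proposal is correct and follows essentially the same route as the paper's proof: (i) via the indicator function and property (iii) of Definition \ref{def:disintegration}, (ii) via level sets $\{f>t\}$ and the measurability of $y\mapsto\mu^y(\{f>t\})$ guaranteed by the disintegration (the paper simply computes $m^{-1}((a,+\infty))=h_a^{-1}((0,+\infty))$ directly rather than using your rational-infimum formulation, and neither needs the Measurable Maximum Theorem), (iii) by the same two-inequality argument combining (i)/(ii) with the definition of essential supremum, and (iv) by the pushforward change of variables and equality of level-set measures. The extra precautions you flag in (iii) (truncations when the essential suprema are infinite) are not needed, since the infinite cases are vacuous for the relevant inequality, but they do no harm.
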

\begin{proof}
	We start with $(i)$.
	 Let $1_A$ denote the indicator function on $A \in \mathcal{B}(\mathcal{M}_1\times \mathcal{E})$. Suppose that $\mu(A) = 0$, then by (iii) in Definition \ref{def:disintegration}, we have that 
		\begin{equation*}
	0 = \mu(A) = \int_{\Mo \times \E} 1_A \ d\mu = \int_{\Mt^{\E}} \Bigg( \int_{F^{-1}(y)} 1_A \ d\mu^y \Bigg)\ d(F_*\mu)(y).
		\end{equation*}
		The integral of a positive function with respect to a positive measure is zero if and only if the integrand function is zero almost-everywhere, so the previous equality is equivalent to 
		\begin{equation*}
		 \int_{F^{-1}(y)} 1_A \ d\mu^y = \mu^{y}(A) = 0 \quad  \text{for }\text{ almost every } y \in \Mt^{\E}.
		\end{equation*}
The reverse argument can be made with the same steps. Let us now prove $(ii)$. Since the Borel $\sigma$-algebra on $\mathbb{R}$ is generated by sets of form $(a, +\infty)$, we proceed to show that $m^{-1}((a, +\infty))$ is Borel measurable for any $a \in \R$. We have 
		\begin{align*}
		m^{-1}((a,+\infty)) & = \{y \in \Mt^{\E}: m(y) > a\} \\
		& = \{y \in \Mt^{\E}: \mu^y(\{(x,e) \in \Mo\times\E: f(x,e) > a\})>0\} \\
		& = \{y \in \Mt^{\E}: \int_{\Mo\times\E} 1_{f^{-1}((a,\infty))} \ d\mu^y>0\}  \\
		& = \{y \in \Mt^{\E}: h_a(y) >0\}  = h_a^{-1}((0,+\infty)),
		\end{align*}
		where $h_a(y) = \int_{\Mo\times\E} 1_{f^{-1}((a,\infty))} \ d\mu^y$. In particular, since $f$ is Borel measurable, $f^{-1}((a,\infty))$ is a measurable set and hence $1_{f^{-1}((a,\infty))}$ is a measurable function on $(\Mo\times\E,\mathcal{B}(\mathcal{M}_1\times\mathcal{E}))$. Now, from (ii) in Definition \ref{def:disintegration} we know that the function $y \mapsto h_a(y) = \int_{\Mo\times\E} 1_{f^{-1}((a,\infty))} \ d\mu^y$ is measurable and so $h_a^{-1}((0,+\infty)) = m^{-1}((a,+\infty))$ is a Borel measurable subset of $\mathcal{M}_2^{\mathcal{E}}$. As $a \in \R$ was arbitrary, this proves that $m$ is Borel-measurable. This concludes $(ii)$.
Next we consider $(iii)$, and let $m$ be as in $(ii)$. We start by showing that
		\begin{align}\label{eq:geq432}
	\essup_{(x,e) \in \Mo\times\E} f(x,e) \geq \essup_{y \in \Mt^{\E}} m(y).
	\end{align}
Let $K= \essup_{(x,e) \in \Mo\times\E} f(x,e)$. By definition we have $\mu(\{(x,e) \in \Mo\times\E: f(x,e) > K \})=0$. Hence, by Proposition \ref{prop:disintegration_properties}, $(i)$, and the fact that $\mu^y$ is concentrated on $F^{-1}(y)$, we have 
\begin{align*}
	0&=\mu^y(\{(x,e) \in \Mo\times\E: f(x,e) > K\})
	  =\mu^y(\{(x,e) \in F^{-1}(y): f(x,e) > K \})
\end{align*} for almost every $y \in \Mt^{\E}$. 
So $m(y) = \essup_{(x,e) \in F^{-1}(y)} f(x,e) \leq K = \essup_{\Mo\times\E} f$ for almost every $y \in \Mt^{\E}$. This proves \eqref{eq:geq432}. Next, we consider the reverse inequality. By definition of $\essup_{\Mt^{\E}} m$, we have \begin{align*}
	 m(y) =\essup_{(x,e) \in F^{-1}(y)} f(x,e) \leq \essup_{\Mt^{\E}} m
	\end{align*}
	for almost every $y \in \Mt^{\E}$. Hence, as $\mu^y$ is concentrated on $F^{-1}(y)$
	\begin{align*}
	\mu^y(\{(x,e) \in F^{-1}(y): f(x,e) > \essup_{\Mt^{\E}} m \})=\mu^y(\{(x,e) \in \Mo\times\E: f(x,e) > \essup_{\Mt^{\E}} m \})=0
	\end{align*} 
	for almost every $y \in \Mt^{\E}$. By Proposition \ref{prop:disintegration_properties}, $(i)$, this is equivalent to $\mu(\{(x,e) \in \Mo\times\E: f(x,e) > \essup_{\Mt^{\E}} m \})=0$, which implies $\essup_{(x,e) \in \Mo\times\E} f(x,e) \leq \essup_{y \in \Mt^{\E}} m(y)$, as desired. This concludes the proof of $(iii)$.
	
	 Finally, let us prove $(iv)$. Equality between integrals
	\begin{align*}
	\int_{F^{-1}(y)} (g \circ \pi_1) \, d\mu^y = \int_{F_y} g d\pi_{1*}\mu^y
	\end{align*}
    comes directly from the definition of pushforward measure, and can be found in detail \cite[Theorem 13.46]{guide2006infinite}. To see the equality between essential suprema, take $M \in [0,+\infty)$, then note that
	\begin{align*}
	(\pi_{1*}\mu^y)\big(g^{-1}(M, +\infty)\big) = \mu^y(\pi_1^{-1}(g^{-1}(M, +\infty))) = \mu^y((g\circ \pi_1)^{-1}(M, +\infty)).
	\end{align*}
	Taking $M = \essup_{F^{-1}(y)} (g\circ \pi_1)$, then the definition of essential supremum gives $\mu^y((g\circ \pi_1)^{-1}(M, +\infty))=0$, which by the previous chain of equalities implies that $0 =(\pi_{1*}\mu^y)(\{x \in F_y: g(x) > \essup_{F^{-1}(y)} (g\circ \pi_1)\})$, so that $g(x) \leq \essup_{F^{-1}(y)} (g\circ \pi_1)$ for $\pi_{1*}\mu^y$-almost every $x \in F_y$. This gives $\essup_{x \in F_y} g(x) \leq \essup_{F^{-1}(y)} (g\circ \pi_1)$, 
	By taking instead $M = \essup_{x \in F_y} g(x)$, the same reasoning proves the reverse inequality between essential suprema. Hence equality is proven, concluding the proof of $(iv)$.
\end{proof}

\subsection{Proof of Theorem \ref{thm:optimal_bounds}}

Having established the above preliminaries, we now proceed to proving Theorem \ref{thm:optimal_bounds}. In order to do so, we will split the Theorem up into two results, Proposition \ref{prop:optimal_bounds1} and Proposition \ref{prop:well-defined} and prove these. These results combined provide the proof of Theorem \ref{thm:optimal_bounds}.

\begin{proposition}[Lower bound of part $(i)$, Theorem \ref{thm:optimal_bounds}]\label{prop:optimal_bounds1}
	Given the assumptions in Theorem \ref{thm:optimal_bounds}, then the following holds for every $p \in [1,\infty]$:
	\begin{align*}
		\operatorname{kersize}^{\text{a}}(F,\Mo, \E, p) \leq 2c_{\mathrm{opt}}^{\text{a}}(F,\Mo,  \E,  p).
	\end{align*}
\end{proposition}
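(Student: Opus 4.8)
The plan is to replicate the structure of the lower bound in Theorem~\ref{thm:optimal_bounds_sup}, with the suprema over $y$ and over $F_y$ replaced by integrals against $F_*\mu$ and the disintegration measures $\mu^y$, and with the elementary triangle inequality replaced by Minkowski's inequality on a product probability space. Fix an arbitrary $\varphi\in\mathcal{C}$; since $c_{\mathrm{opt}}^\text{a}(F,\Mo,\E,p)=\inf_{\varphi\in\mathcal{C}}\mathrm{Err}^{\mathrm{a}}(\varphi,p)$, it suffices to prove $\operatorname{kersize}^\text{a}(F,\Mo,\E,p)\le 2\,\mathrm{Err}^{\mathrm{a}}(\varphi,p)$ for this arbitrary $\varphi$, and then take the infimum. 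The pointwise input, valid for every $y\in\Mt^\E$, every $(x,e),(x',e')\in F_y$, and every $z\in\varphi(y)$, is
\begin{align*}
d_\X(x,x')\le d_\X(x,z)+d_\X(z,x')\le d_\X^H(x,\varphi(y))+d_\X^H(x',\varphi(y)),
\end{align*}
where the last step uses $d_\X(x,z)\le\sup_{w\in\varphi(y)}d_\X(x,w)=d_\X^H(x,\varphi(y))$; this stays (trivially) true when $\varphi(y)=\emptyset$, since then $d_\X^H(x,\varphi(y))=+\infty$. In particular, if $\mathrm{Err}^{\mathrm{a}}(\varphi,p)=+\infty$ the claim is immediate, so we may assume it is finite.

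Next I would record the measure-theoretic bookkeeping. Write $a_y(x,e):=d_\X^H(x,\varphi(y))$ for $(x,e)\in F_y$, so that $a_y(x,e)=r_\varphi(x,e)$ on the fiber; recall that $r_\varphi$ is Borel measurable because $\varphi\in\mathcal{C}$. By Definition~\ref{def:disintegration}, for $F_*\mu$-almost every $y$ the measure $\mu^y$ is a probability measure concentrated on the fiber $F_y$ inside $\Mo\times\E$, so the displayed triangle inequality holds $\mu^y\otimes\mu^y$-almost everywhere for such $y$. Moreover, applying the disintegration identity Definition~\ref{def:disintegration}(iii) to $f=r_\varphi^{\,p}$ gives, for $1\le p<\infty$,
\begin{align*}
\mathrm{Err}^{\mathrm{a}}(\varphi,p)^p=\int_{\Mt^\E}\int_{F_y}a_y(x,e)^p\,d\mu^y(x,e)\,d(F_*\mu)(y).
\end{align*}

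For $1\le p<\infty$, fix a good $y$ and apply Minkowski's inequality on the product probability space $(F_y\times F_y,\mu^y\otimes\mu^y)$ to the two functions $((x,e),(x',e'))\mapsto a_y(x,e)$ and $((x,e),(x',e'))\mapsto a_y(x',e')$. Each of these has $L^p(\mu^y\otimes\mu^y)$-norm equal to $\big(\int_{F_y}a_y^p\,d\mu^y\big)^{1/p}$, because $\mu^y$ is a probability measure, so
\begin{align*}
\Big(\int_{F_y}\int_{F_y}\big(a_y(x,e)+a_y(x',e')\big)^p\,d\mu^y\,d\mu^y\Big)^{1/p}\le 2\Big(\int_{F_y}a_y(x,e)^p\,d\mu^y\Big)^{1/p}.
\end{align*}
Combining this with the pointwise bound $d_\X(x,x')\le a_y(x,e)+a_y(x',e')$, raising to the $p$-th power, and integrating over $\Mt^\E$ against $F_*\mu$ yields $\operatorname{kersize}^\text{a}(F,\Mo,\E,p)^p\le 2^p\,\mathrm{Err}^{\mathrm{a}}(\varphi,p)^p$. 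For $p=\infty$, set $E:=\mathrm{Err}^{\mathrm{a}}(\varphi,\infty)<\infty$; then $r_\varphi\le E$ holds $\mu$-a.e., so by Proposition~\ref{prop:disintegration_properties}(i) for $F_*\mu$-a.e.\ $y$ we get $a_y\le E$ $\mu^y$-a.e.\ on $F_y$, whence $d_\X(x,x')\le 2E$ for $\mu^y\otimes\mu^y$-a.e.\ pair, i.e.\ $\operatorname{kersize}^\text{a}(F,\Mo,\E,\infty)\le 2E$. In all cases, taking the infimum over $\varphi\in\mathcal{C}$ completes the proof.

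The only genuinely delicate point is the passage from the pointwise triangle inequality to the integrated one in the range $1\le p<\infty$: the right way to obtain the constant $2$ is Minkowski's inequality on the product space $(F_y\times F_y,\mu^y\otimes\mu^y)$ (rather than a crude $(a+b)^p\le 2^{p-1}(a^p+b^p)$ estimate, which would give the wrong constant), together with the elementary observation that integrating out one marginal against the probability measure $\mu^y$ returns $\int_{F_y}a_y^p\,d\mu^y$. Everything else is a routine application of Definition~\ref{def:disintegration} and Proposition~\ref{prop:disintegration_properties} to align the various ``almost everywhere'' qualifiers, plus Fubini/Tonelli on the nonnegative integrands.
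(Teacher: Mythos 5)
Your proof is correct and follows essentially the same route as the paper: fix an arbitrary $\varphi\in\mathcal{C}$, bound $d_\X(x,x')\le d_\X^H(x,\varphi(y))+d_\X^H(x',\varphi(y))$ via the triangle inequality, integrate twice against the probability measure $\mu^y$, integrate in $y$ using the disintegration identity, take the infimum over $\varphi$, and handle $p=\infty$ with essential suprema and Proposition \ref{prop:disintegration_properties}(i), exactly as the paper does. The only (cosmetic) difference is that you obtain the factor $2$ from Minkowski's inequality on $L^p(\mu^y\otimes\mu^y)$ whereas the paper uses a pointwise convexity estimate; note, however, that your closing aside is mistaken: $(a+b)^p\le 2^{p-1}(a^p+b^p)$, combined with the fact that the two marginal integrals of the residual coincide and $\mu^y$ is a probability measure, yields exactly the same bound $2^p\int_{F_y} d_\X^H(x,\varphi(y))^p\,d\mu^y(x,e)$ and hence the same constant $2$ after taking $p$-th roots.
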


\begin{proof}[Proof of Proposition \ref{prop:optimal_bounds1}]
	Let $\varphi:\Mt^{\E} \rightrightarrows\X$ be an arbitrary reconstruction mapping. Fix  $y \in \Mt^{\E}$ and consider $(x,e),(x',e') \in \Mo\times\E$ such that $F(x,e) = F(x',e')= y$. Then, by the triangle inequality, we deduce
\begin{equation}\label{eq:triangle1}
	d_\X(x,x') \leq d_\X^H(x, \varphi(F(x,e))) + d_\X^H(x',\varphi(F(x',e'))).
\end{equation} 
Let us now distinguish the two cases where $p = \infty$ and $p\in [1, \infty)$. The structure of the proof will be very similar in the two cases, with the main difference that the case $p = \infty$ involves essential suprema, while the case $p \in[1,+ \infty)$ involves integrals. We will fully prove the case $p\in [1, \infty)$ and provide a sketch of the proof of the case $p=\infty$, as it is virtually identical. In both cases, however, equation \eqref{eq:triangle1} will play a crucial role. To ensure that the essential suprema and integrals are well-defined we assume that $\varphi \in \mathcal{C}$.

\emph{Case  $p\in [1, \infty)$}; Here we use that for $a \in [0,\infty)$ the map $a \mapsto a^p$ is convex and by the definition of convexity, we have that for $a,b \in [0,\infty)$ that $( \tfrac{1}{2} a + \tfrac{1}{2} b)^{p} \leq \tfrac{1}{2} a^{p} + \tfrac{1}{2} b^{p}$. Replacing $a,b$ with $2a,2b$ yields $(a+b)^p \leq 2^{p-1}(a^p + b^p)$ for $a,b\geq 0$. Now integrating \eqref{eq:triangle1} twice with respect to $\mu^y$ and using that $(a+b)^p \leq 2^{p-1}(a^p + b^p)$ for $a,b\geq 0$, we obtain
\begin{align*}
	&\int_{F^{-1}(y)} \int_{F^{-1}(y)} d_\X(x,x')^p \, d\mu^y(x,e) \, d\mu^y(x',e')   \\
	\leq&\int_{F^{-1}(y)} \int_{F^{-1}(y)} \Big(d_\X^H(x, \varphi(F(x,e))) + d_\X^H(x',\varphi(F(x',e'))) \Big)^p \ d\mu^y(x,e) \, d\mu^y(x',e')  \\
	=&  2^p\int_{F^{-1}(y)} d_\X^H(x, \varphi(F(x,e)))^p \ d\mu^y(x,e)
\end{align*}
where in the last step we also used the fact that $\mu^y$ is a probability measure and the integrals are well-defined as $\varphi \in \mathcal{C}$. Now, integrating the above inequality with respect to $F_*\mu$ on $\Mt^{\E}$, by (iii) in Definition \ref{def:disintegration}, and raising to the power $\frac{1}{p}$ gives that
\begin{align*}
	\operatorname{kersize}^{\text{a}}(F,\Mo, \E, p) & = \Bigg(\int_{\Mt^{\E}} \int_{F_y} \int_{F_y} d_\X(x,x')^p \ d\mu^y(x,e) \ d\mu^y(x',e') \, d(F_*\mu)(y) \Bigg)^\frac{1}{p} \\
	&\leq \Bigg( 2^p \int_{\Mt^{\E}} \int_{F_y} d_\X^H(x, \varphi(F(x,e)))^p \ \ d\mu^y(x,e) \ d(F_*\mu)(y)\Bigg)^\frac{1}{p} \\
	& = 2  \Bigg( \int_{\Mo\times\E} d_\X^H(x, \varphi(F(x,e)))^p \ d\mu(x,e)  \Bigg)^\frac{1}{p}.
\end{align*} 
Since $\varphi  \in \mathcal{C}$ was arbitrary, by taking the infimum over $\varphi  \in \mathcal{C}$ we obtain:
\begin{align*}
	\operatorname{kersize}^{\text{a}}(F,\Mo, \E, p) &\leq 2\inf_{\varphi \in \mathcal{C}}\Bigg( \int_{\Mo\times\E} d_\X^H(x, \varphi(F(x,e)))^p \ d\mu(x,e)  \Bigg)^\frac{1}{p} \\
	& = 2c_{\mathrm{opt}}^{\text{a}}(F,\Mo,  \E, p).
\end{align*}
The proposition is therefore also proven in the case $p \in [1,+\infty)$.

\emph{Case $p = \infty$}; the proof follows the basic structure of the case $p \in [1,\infty)$. In \eqref{eq:triangle1} instead of integrating we take the essential supremum respect to the measure $\mu^y$. Then in the next step, instead of integrating, we take essential supremum in $y$ with respect to the measure $F_*\mu$ on $\Mt^{\E}$ and apply Proposition \ref{prop:disintegration_properties}. Finally, as $\varphi:\Mt^{\E} \rightrightarrows\X$ was arbitrary, taking the infimum over $\varphi  \in \mathcal{C}$ concludes the proof.

\end{proof}

\begin{proposition}[Part $(ii)$ and $(iii)$ and upper bound in (i) of Theorem \ref{thm:optimal_bounds}]\label{prop:well-defined}
	The following holds for every $p \in [1,\infty]$:
    \begin{equation*}
		c_{\mathrm{opt}}^{\text{a}}(F,\Mo,  \E,  p) \leq \operatorname{kersize}^{\text{a}}(F,\Mo, \E, p),
	\end{equation*}
	and  the map $\Psi\colon \Mt^{\E} \rightrightarrows \X$ given by,
	\begin{align}\label{eq:optimal_map_inspiration2}
		\Psi(y) &= \argmin_{z \in \X}\essup_{(x,e) \in F^{-1}(y)} d_\X(x,z) & (p = \infty) \\ \label{eq:optimal_map_inspiration3}
		\Psi(y) &= \argmin_{z \in \X} \int_{F^{-1}(y)} d_\X(x,z)^p \ d\mu^y(x,e)  & (p \in [1, \infty))
	\end{align}
	is an optimal map with average error of order $p$. Moreover, $\Psi$ has non-empty compact values, is measurable and it admits a measurable selector.
\end{proposition}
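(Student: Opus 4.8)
The plan is to recast the minimisation defining $\Psi$ as a parametric optimisation over a conveniently chosen \emph{measurable} compact-valued correspondence, so that the Measurable Maximum Theorem (Theorem~\ref{thm:mmt}) applies verbatim, and then to read off optimality and the upper bound by testing against a measurable selector of $\Psi$. Throughout, write
\[
g_p(y,z)=\int_{F_y} d_\X(x,z)^p\,d\mu^y(x,e)\quad(p<\infty),\qquad g_\infty(y,z)=\essup_{(x,e)\in F_y} d_\X(x,z),
\]
so that $\Psi(y)=\argmin_{z\in\X}g_p(y,z)$, matching \eqref{eq:optimal_map_inspiration2}--\eqref{eq:optimal_map_inspiration3}. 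The first step is to verify that $g_p$ is a Carathéodory function on $\Mt^\E\times\X$. Continuity of $g_p(y,\cdot)$ for fixed $y$ comes from $|d_\X(x,z)-d_\X(x,z')|\le d_\X(z,z')$: this gives $1$-Lipschitzness of $g_\infty(y,\cdot)$ directly, and local Lipschitzness of $g_p(y,\cdot)$ after the mean value theorem applied to $t\mapsto t^p$ on the bounded range of $d_\X(\cdot,z)$ over the bounded set $F_y$ (Assumption~\ref{a:Mcompact}). Measurability of $y\mapsto g_p(y,z)$ for fixed $z$ follows from property~(ii) of a disintegration (Definition~\ref{def:disintegration}) when $p<\infty$, and from Proposition~\ref{prop:disintegration_properties}(ii) when $p=\infty$, both applied to the fixed Borel function $(x,e)\mapsto d_\X(x,z)^p$.

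Next I would perform the localisation that replaces the ambient space $\X$ by a compact ball. Fix any $z_0\in\X$ and set $w(y)=\essup_{(x,e)\in F_y}d_\X(z_0,x)$, which is finite, since $F_y$ is bounded and non-empty, and Borel measurable in $y$ by Proposition~\ref{prop:disintegration_properties}(ii). The triangle inequality gives $g_p(y,z_0)\le w(y)^p$ and $g_p(y,z)\ge(d_\X(z_0,z)-w(y))_{+}^{p}$ (with the obvious variants for $p=\infty$), so any $z$ with $d_\X(z_0,z)>2w(y)$ satisfies $g_p(y,z)>g_p(y,z_0)\ge\inf_\X g_p(y,\cdot)$. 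Hence all minimisers lie in $\Gamma(y):=B_{d_\X}(z_0,2w(y))$, which is non-empty and, by the Heine--Borel property in Assumption~\ref{a:sc}\ref{it:as2}, compact, and the correspondence $y\mapsto\Gamma(y)$ is weakly measurable because its radius $y\mapsto 2w(y)$ is measurable. By the extreme value theorem $g_p(y,\cdot)$ attains its minimum on the compact set $\Gamma(y)$, this minimum equals $v(y):=\inf_\X g_p(y,\cdot)$, and $\Psi(y)=\argmin_{z\in\Gamma(y)}g_p(y,z)$. Applying Theorem~\ref{thm:mmt} with $S=\Mt^\E$, correspondence $\Gamma$, and Carathéodory integrand $(y,z)\mapsto-g_p(y,z)$ then yields in one stroke that $\Psi$ has non-empty compact values, is measurable, and admits a measurable selector $\psi\colon\Mt^\E\to\X$; it also shows that $v$ is measurable, being the associated value function.

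It remains to establish optimality and the upper bound. Since $\Psi$ is measurable with non-empty compact values, $\Psi\in\mathcal{C}$ by Proposition~\ref{prop:C}, and the same argument applied to $y\mapsto\{\psi(y)\}$ gives $\psi\in\mathcal{C}$. Using the disintegration identity (Definition~\ref{def:disintegration}(iii)) and the fact that $\psi(y)$ realises the minimum defining $v(y)$, one obtains $\mathrm{Err}^{\mathrm{a}}(\psi,p)^p=\int_{\Mt^\E}v(y)\,d(F_*\mu)(y)$ for $p<\infty$, and the analogue with essential suprema (via Proposition~\ref{prop:disintegration_properties}(iii)) for $p=\infty$. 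Conversely, for any $\varphi\in\mathcal{C}$ and any selector $s$ of $\varphi$ we have $d_\X^H(x,\varphi(y))\ge d_\X(x,s(y))$, whence $\mathrm{Err}^{\mathrm{a}}(\varphi,p)^p\ge\int_{\Mt^\E}\big(\int_{F_y}d_\X(x,s(y))^p\,d\mu^y\big)\,d(F_*\mu)(y)\ge\int_{\Mt^\E}v(y)\,d(F_*\mu)(y)$ (and likewise with essential suprema when $p=\infty$); taking the infimum over $\varphi\in\mathcal{C}$ gives $c_{\mathrm{opt}}^{\text{a}}(F,\Mo,\E,p)=\mathrm{Err}^{\mathrm{a}}(\psi,p)$, so $\psi$ is an optimal map, and for $p=\infty$ so is $\Psi$ itself since there $\essup_{(x,e)\in F_y}\sup_{z\in\Psi(y)}d_\X(x,z)=v(y)$ for $F_*\mu$-a.e.\ $y$ (use a countable dense subset of the compact set $\Psi(y)$). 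Finally, $v(y)=\min_z\int_{F_y}d_\X(x,z)^p\,d\mu^y\le\int_{F_y}\!\int_{F_y}d_\X(x,x')^p\,d\mu^y(x,e)\,d\mu^y(x',e')$ (take $z=x'$ and integrate in $(x',e')$), so integrating in $y$ and taking $p$-th roots yields $c_{\mathrm{opt}}^{\text{a}}(F,\Mo,\E,p)\le\operatorname{kersize}^{\text{a}}(F,\Mo,\E,p)$, and the case $p=\infty$ is identical with essential suprema replacing integrals.

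The step I expect to be the main obstacle is the measurability bookkeeping: forcing the unconstrained $\argmin$ into the hypotheses of Theorem~\ref{thm:mmt} requires producing the auxiliary weakly measurable compact-valued correspondence $\Gamma$ and verifying Borel measurability of the fibre-wise integrals and essential suprema, which is precisely where Proposition~\ref{prop:disintegration_properties} and the disintegration do the work. A secondary, more cosmetic, subtlety is that for $p<\infty$ the Hausdorff error charges the worst point of a multi-valued output, so the optimality of a \emph{possibly} multi-valued $\Psi$ is cleanest when expressed through its measurable selector; this leaves the value of $c_{\mathrm{opt}}^{\text{a}}$ unchanged.
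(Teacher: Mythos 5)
Your construction is essentially the paper's: show that $(y,z)\mapsto g_p(y,z)$ is Carath\'eodory (continuity in $z$ by a triangle/Minkowski-type estimate, measurability in $y$ from the disintegration and Proposition \ref{prop:disintegration_properties}), localise the unconstrained $\argmin$ to a compact ball, apply the Measurable Maximum Theorem \ref{thm:mmt} to obtain non-empty compact values, measurability and a measurable selector, use Proposition \ref{prop:C} to get $\Psi\in\mathcal{C}$, and obtain optimality and the kersize upper bound by fibrewise comparison with an arbitrary $\varphi\in\mathcal{C}$ and by testing $z=x'$ and integrating. The one structural difference in the measurability step is that you localise to a $y$-dependent ball $\Gamma(y)=B_{d_\X}(z_0,2w(y))$ with measurable radius, so you must (and can, since $\Gamma^\ell(V)$ is a level set of $w$) check weak measurability of $\Gamma$, whereas the paper uses the single constant compact set $B_{d_\X}(\Mo,2\diam(\Mo))$, which is trivially weakly measurable; both localisations work.

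The substantive divergence is the optimality step for $p<\infty$: you prove that the measurable selector $\psi$ attains $c_{\mathrm{opt}}^{\text{a}}$ and deliberately do not prove that the set-valued $\Psi$ does, calling this cosmetic. Relative to the literal statement this is a deviation, but it is in fact the sounder route. The paper's own argument rests on the inequality $\int_{F_y} d_\X^H(\Psi(y),x)^p\,d\mu^y \le \int_{F_y} d_\X(z,x)^p\,d\mu^y$ for $z\in\varphi(y)$, which fails when $\Psi(y)$ is not a singleton: the Hausdorff error charges the worst point of $\Psi(y)$, while each individual $z\in\Psi(y)$ only attains the minimum separately. For instance, with $\X=\R$, $\Mo=\{0,1\}$, $\E=\{0\}$, $F\equiv 0$, $\mu$ uniform on $\Mo\times\E$ and $p=1$, one finds $\Psi(0)=[0,1]$, $\mathrm{Err}^{\mathrm{a}}(\Psi,1)=1$ but $c_{\mathrm{opt}}^{\text{a}}=1/2$, attained by the midpoint selector; so for finite $p$ optimality can in general only be asserted for a (measurable) selector, exactly as you do, while for $p=\infty$ your countable-density argument correctly shows that $\Psi$ itself is optimal, since all points of $\Psi(y)$ share the same essential-supremum value $v(y)$. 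One small repair to your write-up: the intermediate bound phrased through ``any selector $s$ of $\varphi$'' raises a measurability question for $y\mapsto\int_{F_y}d_\X(x,s(y))^p\,d\mu^y$; it is cleaner to bound pointwise in $y$, namely $\int_{F_y} d_\X^H(x,\varphi(y))^p\,d\mu^y\ge v(y)$ by picking an arbitrary $z\in\varphi(y)$, and then integrate $v$, which you have already shown to be measurable. With that adjustment your argument fully establishes the measurability, compactness and selector claims and the bound $c_{\mathrm{opt}}^{\text{a}}(F,\Mo,\E,p)\le \operatorname{kersize}^{\text{a}}(F,\Mo,\E,p)$ for all $p$.
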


\begin{proof}[Proof of Proposition \ref{prop:well-defined}]	
	We distinguish the cases $p = \infty$ and $p \in [1,\infty)$.
	In both cases, the structure of the proof consists in proving the following steps:

    \begin{enumerate}[label=(\alph*)]
		\item \emph{$\Psi$, defined either by \eqref{eq:optimal_map_inspiration2} or \eqref{eq:optimal_map_inspiration3}, has non-empty values;}
		\item \emph{$\Psi$ is measurable, has compact values and  admits a measurable selector;}
		\item \emph{$\Psi \in \mathcal{C} = \{\varphi: \mathcal{M}_2^\E \rightrightarrows \X: (x,e) \mapsto d_\X^H(x,\varphi(F(x,e)) \text{ is measurable}\}$};
		\item \emph{$\Psi$ is an optimal map with average error of order $p$};
		\item \emph{Upper bound: $c_{\mathrm{opt}}^{\text{a}}(F,\Mo,  \E, p) \leq  \operatorname{kersize}^{\text{a}}(F,\Mo,\E,p)$ with $p=\infty$ or $p \in [1,+\infty)$}.
	\end{enumerate}

\noindent \emph{First we consider the case $p \in [1,\infty)$.}
Again, let us first introduce some notation, similar to above. Fix $y \in \Mt^{\E}$. Define $f_y: \X \to [0,\infty)$,
	\begin{equation*}
	f_y(z) = \int_{F^{-1}(y)} d_\X(x, z)^p  \ d\mu^y(x,e)
	\end{equation*}
	for $z \in \X$. Define also 
	\begin{align*}
	r_y &= \essup_{\substack{(x,e) \in F^{-1}(y)\\(x',e') \in F^{-1}(y)}} d_\X(x, x'), \\
	E_y^{(x,e)} &= \{(x',e') \in F^{-1}(y): d_\X(x,x') > r_y\} \quad \text{ for } (x,e) \in F^{-1}(y),\\
	G_y &= \{(x,e) \in F^{-1}(y): \mu^y(E_{y}^{(x,e)}) = 0 \}.
	\end{align*}
\noindent Note that $f_y$ is Borel measurable. The function $((x,e),z) \mapsto 1_{F^{-1}(y)}(x,e)\, d_{\X}(x,z)$ is non-negative and Borel measurable. By the Fubini-Tonelli \cite[Theorem~2.37]{folland1999real} $f_y$, as $z \mapsto \int_{F^{-1}(y)} d_\X(x, z)^p  \ d\mu^y(x,e)$, is Borel measurable. Additionally, by a similar line of reasoning, one can show that the function $y \mapsto r_y$ is Borel measurable. Additionally, as the metric space $(\mathcal{M}_1,d_{\mathcal{X}})$ is compact by Assumption \ref{a:Mcompact}, the set $\mathcal{M}_1$ is bounded. Thus, as $F^{-1}(y) \subset \mathcal{M}_1\times\mathcal{E}$, we have that 
\begin{align*}
r_y \leq \sup_{\substack{(x,e) \in F^{-1}(y)\\(x',e') \in F^{-1}(y)}} d_\X(x, x') \leq \diam(\mathcal{M}_1)<\infty.
\end{align*}\newline
\noindent We now show that the set $G_y$ is non-empty and $\mu^y(G_y)=1$. Fix $y \in \mathcal{M}^{\mathcal{E}}$. By definition of the essential supremum, it follows that
       \begin{align*}
        (\mu^y \otimes \mu^y)\Big(\big\{((x,e),(x',e')) \in F^{-1}(y)\times F^{-1}(y) : d_{\mathcal{X}}(x,x') > r_y \big\}\Big)=0.
          \end{align*}
        Then, by Fubini-Tonelli \cite[Theorem~2.37]{folland1999real},
       \begin{align*}
        0 &= (\mu^y \otimes \mu^y)\left(\big\{((x,e),(x',e')) \in F^{-1}(y)\times F^{-1}(y) : d_{\mathcal{X}}(x,x') > r_y \big\}\right)\\
        &= \int_{F^{-1}(y)} \mu^y\big(E_y^{(x,e)}\big)\, d\mu^y(x,e).
          \end{align*}
        Since the integrand is nonnegative, we conclude that
        \begin{align*}
        \mu^y\big(E_y^{(x,e)}\big)=0 \quad \text{for } \mu^y\text{-a.e. } (x,e)\in F^{-1}(y).
      \end{align*}
        Recall the definition $G_y = \{(x,e) \in F^{-1}(y) : \mu^y(E_y^{(x,e)}) = 0\}$. As the disintegration $\mu^y$ is a probability measure that is concentrated on $F^{-1}(y)$, Definition \ref{def:disintegration} (i), we have that $\mu^y(G_y)=1$ and, in particular, $G_y \neq \emptyset$.\newline

	\textbf{Claim.} We claim that the following holds: for every $y \in \Mt^\E$
		\begin{enumerate}
		\item[(I)] $f_y$ is continuous,
		\item[(II)] $\underset{\X}{\argmin}f_y =\underset{B(x,2r_y)}{\argmin}f_y$ for $\mu^y$-almost every $(x,e)\in F^{-1}(y)$.
	\end{enumerate}

We proceed to prove the claim and start by considering (I). We consider a general setting, and let $(X,d)$ be a metric space equipped with a probability measure $\nu$ concentrated on a bounded subset $A\subseteq X$, and define 
\[
g(x) \coloneqq \Big( \int_A d(x,a)^p \, d\nu(a)\Big)^\frac{1}{p}=\|d(x,\cdot)\|_{L^p(X,\nu)} = \|d(x,\cdot)\|_p.
\]
 We claim that $|g(x)-g(z)|\leq d(x,z)$ for all $x,y \in X$. To see this, consider $a, x, z \in X$ and note that by the triangle inequality $d(x,a) \leq d(x,z) + d(z,a)$. Taking $L^p(X,\nu)$-norms in the variable $a$ and applying Minkowski's inequality gives
\begin{align*}
 g(x) = \|d(x,\cdot)\|_p \leq \|d(x,z)+d(z,\cdot)\|_p \leq \|d(x,z)\|_p+\|d(z,\cdot)\|_p = d(x,z) + g(z),
\end{align*}
where in the last passage we used that $\nu$ is a probability measure.
Switching the roles of $x$ and $z$, leads to the desired inequality. It follows that $g$ is continuous. By assumption \ref{a:Mcompact}, since $\Mo$ is compact and $F_y\subseteq \Mo$, then $F_y$ is bounded. Thus, letting $(X,d) = (\X,d_\X)$, $\nu = \pi_{1*}\mu^y$, $A = F_y$ above, and recalling Proposition \ref{prop:disintegration_properties}, (iv), then $g^p=f_y$, which proves (I).

To prove (II) we will show that for $\mu^y$-almost every $(x,e) \in F^{-1}(y)$ we have
	\begin{align}\label{eq:noiselessargmin23ee}
	\Psi(y) = \argmin_{z\in \mathcal{X}} \int_{F^{-1}(y)}d_\X(x',z)^p\ d\mu^y(x',e') = \argmin_{z\in B_{d_\X}(x,2r_y)} f_y(z).
	\end{align}
	Fix $(x,e) \in G_y$. If $z\in \X \setminus B_{d_\X}(x,2r_y)$, then for $\mu^y$-almost every $(x',e')\in F^{-1}(y)$,
	\begin{equation*}
	d_\X(z,x') \geq d_\X(z,x)-d_\X(x,x') > 2r_y - r_y = r_y.
	\end{equation*}
	Thus, the previous inequality holds for any $z\in \X \setminus B_{d_\X}(x,2r_y)$. Thus,
	\begin{align*}
	f_y(z) &= \int_{F^{-1}(y)}d_\X(z,x')^p\ d\mu^y(x',e')    \\
	&> \int_{F^{-1}(y)} r_y^p \, d\mu^y(x',e') = r_y^p.
	\end{align*}
	On the other hand, 
	\begin{align*}
	f_y(x) &=  \int_{F^{-1}(y)}d_\X(x,x')^p\ d\mu^y(x',e')  \\
	& \leq  \int_{F^{-1}(y)} r_y^p \ d\mu^y(x',e') = r_y^p.
	\end{align*}
	Therefore $f_y(z) > f_y(x)$ whenever $z \notin B(x,2r_y)$, which implies that points $z$ outside of such ball cannot be minimisers, hence proving \eqref{eq:noiselessargmin23ee}. This concludes part (II) of the claim.
	
	With the claim, we can prove the required (a)-(e) properties of $\Psi$.
	First, let us prove (a), namely that $\Psi$ in \eqref{eq:optimal_map_inspiration3} has non-empty values. By the Heine-Borel property of the metric $d_\X$, the set $B_{d_\X}(x,2r_y) \subset \mathcal{X}$ is compact, since it is a closed ball with respect to the metric $d_\X$. Hence, the function $f_y$ is continuous by (I) and its minimisers are by (II) restricted to the compact set $B_{d_\X}(x,2r_y)$. Therefore, the minimum in \eqref{eq:optimal_map_inspiration3} is attained by the Extreme Value Theorem. This shows that the argmin is non-empty, and hence that $\Psi$ has non-empty values on $\mathcal{M}_2^\E$.

	 \noindent We now proceed to prove (b), namely that $\Psi$ is measurable and has compact values. Recall that above we have shown that $\diam(\mathcal{M}_1)<\infty$. We will apply the Maximum Measurable Theorem, \ref{thm:mmt}, with 
		$S = \Mt^{\E}$, 
		$X = \X$, and
		\begin{align*}
		\varphi &: \Mt^{\E} \rightrightarrows \X, \qquad \varphi(y) = B_{d_\X}(\Mo,2\diam(\Mo)),  \\
		f &:\Mt^{\E} \times \X \to \R, \ f(y,z) = f_y(z) = \int_{F^{-1}(y)} d_\X(x ,z)^p \ d\mu^y(x,e).
		\end{align*}
		In order to apply the theorem, we need to verify that the assumptions are satisfied. We start by proving that $\varphi$ is weakly-measurable with non-empty compact values. 
		Firstly, it is clear that, since $\varphi$ is constant, then $\varphi$ is weakly measurable and has non-empty values. Moreover, the only value $\varphi$ takes is   $B_{d_\X}(\Mo,2\diam(\Mo))$, which we now prove to be compact. Recall that the distance is $\operatorname{dist}_{d_{\X}}(x,A) = \inf_{a \in A} d_{\X}(x,a)$. Note that, since $\Mo$ is compact by Assumption \ref{a:Mcompact}, then $\Mo$ is bounded and hence $B_{d_\X}(\Mo,2\diam(\Mo))=  \{x\in \X: \operatorname{dist}_{d_\X}(x, \Mo) \leq 2\text{diam}(\Mo)\}$ is bounded too. \newline
        \noindent We now show that  the function $\operatorname{dist}_{d_\X}(\cdot, \Mo)$ is continuous. We consider a general setting, and let $(X,d)$ be a metric space, $A \subseteq X$ be a bounded subset and $g(x)\coloneqq \operatorname{dist}_{d}(x,A) = \inf_{a \in A} d(x,a)$. We claim that $|g(x)-g(y)|\leq d(x,y)$ for all $x,y \in X$. To see this, consider $a, x, y \in X$ and note that  
			\begin{align*}
			d(x,a) \leq d(x,y) + d(y,a) \quad \implies \quad \inf_{a \in A}d(x,a) \leq d(x,y) + \inf_{a \in A} d(y,a).
			\end{align*}
        Switching the roles of $x$ and $y$, leads to the desired inequality. It follows that $g$ is continuous. Moreover, since the function $\operatorname{dist}_{d_\X}(\cdot, \Mo)$ is continuous, the set 
        $$B_{d_\X}(\Mo,2\diam(\Mo)) = \operatorname{dist}_{d_\X}(\cdot,\Mo)^{-1}([0,2\diam(\Mo)])$$ is closed. Hence, we have proven that $B_{d_\X}(\Mo,2\text{diam}(\Mo))$ is closed and bounded, and by the Heine Borel property of $d_\X$ granted by Assumption \ref{a:Mcompact} it follows that $B_{d_\X}(\Mo,2\text{diam}(\Mo))$ is compact. This proves that  $\varphi$ is weakly-measurable with non-empty compact values.
		 Secondly, to prove that $f$ is Carathéodory, we need to show that $f(y, \cdot) = f_y$ is continuous for every fixed $y\in\Mt^{\E}$ and that $f(\cdot,z)$ is measurable for every fixed $z\in\Mo$. On the one hand, for every fixed $y\in\Mt^{\E}$, the function $f_y$ is continuous on $\X$ as proven in claim (I).
			On the other hand, for every fixed $z$, the function $f(\cdot,z): y \mapsto  \int_{F^{-1}(y)} d_\X(x ,z)^p \ d\mu^y(x,e)$ is Borel measurable described by Definition \ref{def:disintegration} (ii).
		Then, by Theorem \ref{thm:mmt}, the possibly set-valued function $\Phi: \Mt^{\E} \rightrightarrows \X$ given by
		\begin{align*}
		\Phi(y) = \argmin_{z \in B_{d_\X}(\Mo,2\text{diam}(\Mo)) } \ \int_{F^{-1}(y)} d_\X(x ,z)^p \ d\mu^y(x,e)
		\end{align*}
		is measurable and has non-empty, compact values. Moreover, by combining \eqref{eq:noiselessargmin23ee} and the fact that for $(x,e) \in G_y$, $B_{d_\X}(x,2r_y) \subseteq B_{d_\X}(\Mo,2\text{diam}(\Mo)) $, we deduce that $\Phi = \Psi$. Hence, $\Psi$ is measurable and has non-empty, compact values.
		
		 \noindent We now prove (c), namely that $\Psi \in \mathcal{C}$. This follows directly from Proposition \ref{prop:C}.
		
		\noindent  We now proceed to prove (d), namely that $\Psi$ is an optimal map.	Let $\varphi \in \mathcal{C}$.  By the minimising definition of $\Psi$,
		\begin{align*}
		\int_{F^{-1}(y)} d_\X^H(\Psi(y),x)^p \ d\mu^y(x,e)  \leq \int_{F^{-1}(y)} d_\X(z,x)^p \ d\mu^y(x,e) 
		\end{align*}
		for every $z \in \varphi(y)$. In particular, taking the supremum with respect to $z \in \varphi(y)$, which coincides with considering the Hausdorff distance, and using Fatou's Lemma yields
		\begin{align*}
		\int_{F^{-1}(y)} d_\X^H(\Psi(y),x)^p \ d\mu^y(x,e) & \leq \sup_{z \in \varphi(y)} \int_{F^{-1}(y)} d_\X(z,x)^p \ d\mu^y(x,e)  \\
		& \leq  \int_{F^{-1}(y)} \sup_{z \in \varphi(y)} d_\X(z,x)^p \ d\mu^y(x,e)  \\
		& \leq  \int_{F^{-1}(y)} d_\X^H(\varphi(y),x)^p \ d\mu^y(x,e).
		\end{align*}
		By integrating with respect to $y\in\Mt^{\E}$, we obtain
		\begin{align*}
		\int_{y \in \Mt^{\E}} &\int_{F^{-1}(y)} d_\X^H(\Psi(y),x)^p \ d\mu^y(x,e) \ d(F_*\mu)(y) \\
        &\leq 
		\int_{y \in \Mt^\E} \int_{F^{-1}(y)} d_\X^H(\varphi(y),x)^p \ d\mu^y(x,e) \ d(F_*\mu)(y).
		\end{align*}
		Due to Definition \ref{def:disintegration} (iii) of the disintegration of measure, the above integrals can be rewritten as
		\begin{align*}
		\int_{\Mo\times\E}  d_\X^H(\Psi(F(x,e)),x)^p \ d\mu(x,e) \leq 
		\int_{\Mo\times\E}  d_\X^H(\varphi(F(x,e)),x)^p \ d\mu(x,e).
		\end{align*}
		Now, as $\varphi \in \mathcal{C}$ was arbitrary and by raising both sides to the power  $\frac{1}{p}$, we obtain
		\begin{align*}
		\Bigg(\int_{(x,e)\in \Mo\times\E}  d_\X^H(\Psi(F(x,e)),x)^p \ d\mu(x,e) \Bigg)^\frac{1}{p} \leq c_{\mathrm{opt}}^{\text{a}}(F,\Mo,  \E,  p).
		\end{align*}
		The opposite inequality holds trivially, as $\Psi \in \mathcal{C}$. Therefore, $\Psi$ is an optimal map.
		
		 \noindent Finally, we proceed to prove (e), namely the upper bound $	c_{\mathrm{opt}}^{\text{a}}(F,\Mo, \E, p) \leq \operatorname{kersize}^{\text{a}}(F,\Mo, \E, p).$ By the minimisation property of $\Phi$, hence also of $\Psi$, for every $(x',e') \in F^{-1}(y)$ and for $F_*\mu$-a.e. $y \in \mathcal{M}_2^{\mathcal{E}}$:
		\begin{align}\label{eq:infimumdel2}
		\int_{F^{-1}(y)} d_\X^H(x ,\Psi(y))^p \ d\mu^y(x,e) \leq \int_{F^{-1}(y)} d_\X(x ,x')^p \ d\mu^y(x,e).
		\end{align}
		Integrating \eqref{eq:infimumdel2} with respect to $\mu^y$ in the $(x',e')$ variables yields,
		\begin{align*}
		\int_{F^{-1}(y)} d_\X^H(x, \Psi(y))^p \ d\mu^y(x,e) \nonumber \leq \int_{F^{-1}(y)}	\int_{F^{-1}(y)} d_\X(x, x')^p \ d\mu^y(x,e) \ d\mu^y(x',e').
		\end{align*}
		where we used that $\mu^y$ is a probability measure.
Integrating both sides over $y\in\Mt^{\E}$ with respect to $F_*\mu$ we obtain
		\begin{align*}
		&\int_{\Mt^{\E}} \int_{F_y} d_\X^H(x, \Psi(y))^p \ d\mu^y(x,e) \ d(F_*\mu)(y)\\
		&\leq 	\int_{\Mt^{\E}} \int_{F^{-1}(y)} \int_{F^{-1}(y)} d_\X(x, x')^p\ d\mu^y(x,e)  \ d\mu^y(x',e') \ d(F_*\mu)(y).
		\end{align*}
		Using the definition of disintegration of the measure $\mu$ on the left hand side of the above inequality yields
		\begin{align*}
		\int_{\Mo\times\E} d_\X^H(x, \Psi(F(x,e)))^p \ d\mu(x,e) \leq\operatorname{kersize}^{\text{a}}(F,\Mo, \E, p)^p.
		\end{align*}
		Then, raising both sides to the $\frac{1}{p}$-th power, gives
		\begin{align*}
		\Bigg(\int_{\Mo\times\E} d_\X^H(x, \Psi(F(x,e)))^p \ d\mu(x,e) \Bigg)^\frac{1}{p} \leq \operatorname{kersize}^{\text{a}}(F,\Mo, \E, p).
		\end{align*}
		And finally, since $\Psi \in \mathcal{C}$, we conclude 
		\begin{align*}
		c_{\mathrm{opt}}^{\text{a}}(F,\Mo, \E, p) \leq \operatorname{kersize}^{\text{a}}(F,\Mo, \E, p).
		\end{align*}	
		This concludes the case $p \in [1,+\infty)$. Hence the proof is complete.

		\emph{Now we consider the case $p=\infty$}. The proof only requires minor modifications to the case $p \in [1,\infty)$, that we will describe in the following. The objective function for $y \in \Mt^{\E}$ and $z \in \X$ is
			\begin{equation*}
				f_y(z) = \essup_{(x,e) \in F^{-1}(y)} d_\X(x, z).
			\end{equation*}
			where the essential supremum is taken with respect to $\mu^y$. Then $\Psi(y) = \argmin_{\X} f_y$. Define, as before,
			\begin{equation*}
				r_y = \essup_{\substack{(x,e) \in F^{-1}(y)\\(x',e') \in F^{-1}(y)}} d_\X(x, x').
			\end{equation*}
			\begin{claim}
				For every $y \in \Mt^\E$: 
				\begin{enumerate}
					\item[(I)] $f_y$ is continuous;
					\item[(II)] $\underset{\X}{\argmin}f_y =\underset{B(x,2r_y)}{\argmin}f_y$ for $\mu^y$-almost every $(x,e)\in F^{-1}(y)$.
				\end{enumerate}
			\end{claim}
		The claims are proven analogously to the case $p \in [1,\infty)$, except that to prove (I), Minkowski's inequality is replaced by the following. We consider a general setting, and let $(X,d)$ be a metric space, $A\subseteq X$ be a bounded subset, $\nu$ be a probability measure on $X$ concentrated on $A$ and $g(x)\coloneqq \essup_{a \in A} d(x,a)$. We claim that $|g(x)-g(z)|\leq d(x,z)$ for all $x,z \in X$. To see this, consider $a, x, z \in X$ and notice that  
			\begin{align*}
				d(x,a) \leq d(x,z) + d(z,a) \quad \implies \quad \essup_{a \in A}d(x,a) \leq d(x,z) + \essup_{a \in A} d(z,a).
			\end{align*}
			Switching the roles of $x$ and $z$, leads to the desired inequality. It follows that $g$ is continuous.
		Claim (II) follows the same line of reasoning as in the case $p \in [1,\infty)$ by replacing the integration with the essential supremum with respect to $\mu^y$.
		
		With the claim, we can show the properties given by the list (a)-(e) stated at the beginning of the proof, similarly to the case $p \in [1,\infty)$. For the sake of brevity, we provide a sketch of the proof.

	Part (a) follows the same line of reasoning as in the case $p \in [1,\infty)$. In part (b) we again apply the Maximum Measurable Theorem \ref{thm:mmt} with $f(y,z) = f_y(z) := \essup_{(x,e) \in F^{-1}(y)} d_\X(x ,z)$ and apply Proposition \ref{prop:disintegration_properties} to show that $f(\cdot,z): y \mapsto  \essup_{(x,e)\in F^{-1}(y)}d_\X(x,z)$ is Borel measurable. Part (c) is again a a direct consequence of (b) and Proposition \ref{prop:C}. Parts (d) and (e) follows the same line of reasoning as in the case $p \in [1,\infty)$ by replacing the integration with the essential supremum. This concludes the proof of the proposition in the case $p = \infty$.
		
\end{proof}

\subsection{Proof of Proposition \ref{prop:rnspkers}}

\begin{proof}
We begin with part $(1)$ by showing that if $A$ satisfies the rNSP, \eqref{eq:rNSP}, with respect to the set $\Mo$, then $A$ is injective on $\Mo$. Let $x,x' \in \mathcal{M}_1$ such that $Ax=Ax'$, then $A(x-x')=0$. Now we apply the rNSP, \eqref{eq:rNSP}, to $h\coloneq x-x'$, and since $x-x' \in \Mo - \Mo$, this yields
\begin{align*}
\vertiii{x-x'}_{\X_1} \leq D_1 \mathrm{dist}_{\X_2}(x-x',\mathcal{M}_1-\mathcal{M}_1) + 0 = 0.
\end{align*}
This implies that $h=x-x'=0$. Hence $A$ is injective on $\Mo$. \newline
Additionally, if $A$ is injective on $\Mo$, then for $x,x' \in \mathcal{M}_1$ such that $Ax=Ax'$, then $x-x'=0$ and we have that
\begin{align*}
    (\Mo - \Mo) \cap \mathcal{N}(A) = \{0\}.
\end{align*}  
For the converse direction, let $x,x' \in \Mo$ such that $Ax=Ax'$. This is equivalent to $A(x-x')=0$ and as $(\Mo - \Mo) \cap \mathcal{N}(A) = \{0\}$, we have that $x-x'=0$. Thus, $x=x'$ and $A$ is injective.\newline

\noindent For part $(2)$, let $y \in \Mt^\E$ and $x,x'\in \mathcal{M}_1$ and $e,e'\in \mathcal{E}$ be such that $y=F(x,e) = Ax+e =Ax'+e'= F(x',e')$. From the rNSP \eqref{eq:rNSP}, as $\mathrm{diam}_{d_{\Y}}(\mathcal{E}) = \sup_{e,e' \in \E}\vertiii{e-e'}_{\Y} = \eta$ and since $x-x' \in \Mo - \Mo$, we then have that 
\begin{align*}
    \vertiii{x-x'}_{\X_1} &\leq D_1\mathrm{dist}_{\X_2}(x-x', \mathcal{M}_1 -\mathcal{M}_1) + D_2 \vertiii{A(x-x')}_{\Y} \\
    &= D_2\vertiii{e-e'}_{\Y} \leq D_2\eta.
\end{align*} 
Taking supremum over all $x, x' \in F_y$ and, then, over $y \in \Mt^\E$ yields
\begin{align*}
\mathrm{kersize}^{\text{w}}(F, \mathcal{M}_1, \mathcal{E})
= \sup_{y \in \mathcal{M}_2^{\mathcal{E}}} \ \sup_{x,x' \in F_y} d_{\X}(x,x')
\le D_2\eta.
\end{align*}
\end{proof}

\subsection{Proof of Proposition \ref{prop:Bayes}}

Before introducing the Bayesian framework for inverse problems, we recall the concept of regular conditional distribution. 

\begin{definition}[{Regular conditional distribution, \cite[$\S$ 10.2]{dudley2018real}}]\label{def:regconddist}
Let $(\Omega, \mathcal{F}, \mathbb{P})$ be a probability space, and let $\mathbf{x} : (\Omega, \mathcal{F}) \to (\X, \mathcal{B}(\X))$ and $\mathbf{y} : (\Omega, \mathcal{F}) \to (\Y, \mathcal{B}(\Y))$ be random variables. The regular conditional distribution of $\mathbf{x}$ given $\mathbf{y}$ is a function $\mathbb{P}_{\mathbf{x} \mid \mathbf{y} = \cdot} = \mathbb{P}_{\mathbf{x}}[\cdot \mid \mathbf{y} = \cdot] : \mathcal{B}(\X) \times \Y \to [0,1]$
such that:
\begin{enumerate}
    \item For every $y \in \Y$, the function $\mathbb{P}_{\mathbf{x} \mid \mathbf{y} = y} : \mathcal{B}(\X) \to [0,1]$ is a probability measure; 
    \item For every $A \in \mathcal{B}(\X)$, the function $\mathbb{P}_{\mathbf{x} \mid \mathbf{y} = \cdot}(A) : \Y \to [0,1]$ is $(\mathcal{B}(\Y), \mathcal{B}([0,1]))$-measurable and
    \begin{equation}
    \mathbb{P}_{\mathbf{x}}(A)= \int_{\Y} \mathbb{P}_{\mathbf{x} \mid \mathbf{y} = y}(A)\, d\mathbb{P}_{\mathbf{y}}(y),
    \tag{3.5.1}
    \end{equation}
    where $\mathbb{P}_{\mathbf{x}} = \mathbb{P} \circ \mathbf{x}^{-1}$ and $\mathbb{P}_{\mathbf{y}} = \mathbb{P} \circ \mathbf{y}^{-1}$ are the laws of $x$ on $(\X, \mathcal{B}(\X))$ and of $y$ on $(\Y, \mathcal{B}(\Y))$, respectively.
\end{enumerate}
\end{definition}

\noindent Regular conditional distributions do not always exist. The following positive result guarantees that a regular conditional distribution exists when the random variable that is being conditioned takes values in a completely metrizable and separable topological space, also referred to as a Polish space.

\begin{theorem}[{Existence and uniqueness of regular conditional distributions \cite[Theorem 10.2.2]{dudley2018real} or \cite[Theorem B.32 and Lemma B.40]{schervish2012theory}}]
Let $(\Omega, \mathcal{F}, \mathbb{P})$ be a probability space and let $\Y$ be a Polish space. Let $\mathbf{x} : (\Omega, \mathcal{F}) \to (\X, \mathcal{B}(\X))$ and $\mathbf{y} : (\Omega, \mathcal{F}) \to (\Y, \mathcal{B}(\Y))$ be random variables. Then, there exists a regular conditional distribution $\mathbb{P}_{\mathbf{x} \mid \mathbf{y} = \cdot} = \mathbb{P}_{\mathbf{x}}[ \cdot \mid \mathbf{y} = \cdot] : \mathcal{B}(\X) \times \Y \to [0,1]$ of $\mathbf{x}$ given $\mathbf{y}$. Moreover, the regular conditional distribution is unique up to $\mathbb{P}$-almost sure equivalence.
\end{theorem}

\begin{remark}[Disintegrations of measure are regular conditional probabilities]
Let $(\Omega, \mathcal{F}, \mathbb{P})$ be a probability space and let $\X$ be a Polish space. Let $\mathbf{x} : (\Omega, \mathcal{F}) \to (\X, \mathcal{B}(\X))$ be a random variable. Equip $(\X, \mathcal{B}(\X))$ with a probability measure given by the law of $\mathbf{x}$, as $\mu_{\mathbf{x}} = \mathbb{P}_{\mathbf{x}}(\cdot)$. Let $F: (\X, \mathcal{B}(\X)) \rightarrow (\Y, \mathcal{B}(\Y))$ be measurable. Define the random variable $\mathbf{y} := F \circ \mathbf{x}$ on $\Y$. Then, a disintegration $\{\mu_{\mathbf{x}}^y\}_{y\in \Y}$ of $\mu_{\mathbf{x}}$ along $F$ defines a regular conditional distribution of $\mathbf{x}$ given $\mathbf{y}$. The reverse, that a regular conditional distribution is a disintegration of measure, in general does not hold; as in Definition \ref{def:disintegration} (i) of the disintegration of measure, the support is required to concentrate on the preimages $F^{-1}(y)$. 
\end{remark}

\begin{proof}
We start with part $(1)$. Starting from the joint random variable $(\mathbf{x},\mathbf{e}) \sim \mu_{\mathbf{x},\mathbf{e}} = \mathbb{P}[(\mathbf{x},\mathbf{e}) \in \cdot]$, the random variable $\mathbf{x}$ can be obtained by the composition $\mathbf{x} = \pi_1 \circ (\mathbf{x},\mathbf{e})$. Then, its distribution is the Bayesian prior of the inverse problem, and is given by the marginal distribution $\mu_{\mathbf{x}} = \pi_{1*} \mu_{\mathbf{x},\mathbf{e}} = \mathbb{P}[\mathbf{x} \in \cdot]$.

For part $(2)$ apply proposition \ref{prop:disint}, but consider a different measurable function and different spaces. We disintegrate $\mu$ along the projection $\pi_1 : (\X \times \E, \mathcal{B}(\X \times \E)) \to (\X, \mathcal{B}(\X))$. By Proposition \ref{prop:disint} there exists a disintegration $\{\mu^x\}_{x \in \X}$ of $\mu$ along $\pi_1$ and each measure $\mu^x$ is a probability measure that is concentrated on $\{x\} \times \mathcal{E}$. Then, this disintegration $\mu^x$ can be pushed forward via $F$, as $\mu_{\mathbf{y} \mid \mathbf{x} = x} := F_* \mu^x$. Now we show that  $\mu_{\mathbf{y} \mid \mathbf{x} = \cdot}$ is a regular conditional distribution of $\mathbf{y}$ given $\mathbf{x}$. As $\mu^x$ is a probability measure, for $A \in \mathcal{B}(\Mt^\E)$ we have that $\mu_{\mathbf{y} \mid \mathbf{x} = x}(A) = \mu^x(F^{-1}(A)) \in [0,1]$. Additionally, we have that $\mu_{\mathbf{y} \mid \mathbf{x} = x}(\Mt^\E) = \mu^x(F^{-1}(\Mt^\E))=1$, since $\mu^x$ is a probability measure and as $\mu$ is supported on $\Mo \times \E$ by Assumption \ref{a:measur}. Thus $\mu_{\mathbf{y} \mid \mathbf{x} = x}$ is a probability measure. For all $V \in \mathcal{B}(\X\times\E)$ the function $x \mapsto \mu^x(V)$ is measurable. For $A \in \mathcal{B}(\Mt^\E)$ as $F$ is measurable, $V = F^{-1}(A)$ is measurable and we have that $x \mapsto \mu_{\mathbf{y} \mid \mathbf{x} = x}(A) = \mu^x(F^{-1}(A))$ is measurable. Now recall that by Definition \ref{def:disintegration} (iii), we have that the disintegration of measure satisfies
\begin{align*}
\mu(A) = \int_{\X \times \E} 1_A(x,e) d\mu(x,e) = \int_{\X} \left(\int_{\E} 1_A(x,e) d\mu^x(x,e)\right) d\pi_{1*}\mu(x) = \int_{\X} \mu^x(A) d\pi_{1*}\mu(x).
\end{align*}
Now by Definition \ref{def:disintegration} (iii) and as $\mu_{\mathbf{y}} = F_* \mu_{\mathbf{x},\mathbf{e}}$, the property $(2)$ in Definition \ref{def:regconddist} of conditional distributions is verified:
\begin{align*}
\mu_{\mathbf{y}}(A) &= F_* \mu(A) = \mu_{\mathbf{x},\mathbf{e}}(F^{-1}(A))\\
&= \int_\X \mu^x(F^{-1}(A))\, d\pi_{1*}\mu(x)
= \int_\X F_* \mu^x(A)\, d\pi_{1*}\mu(x)\\
&= \int_\X \mu_{\mathbf{y} \mid \mathbf{x} = x}(A)\, d\mu_{\mathbf{x}}(x),
\end{align*}
for every $A \in \mathcal{B}(\Mt^\E)$. This construction proves that $\{\mu_{\mathbf{y} \mid \mathbf{x} = x}\}_{x \in \mathcal{X}}$ is a likelihood distribution in the Bayesian sense.
For part $(3)$ we apply Proposition \ref{prop:disint} to disintegrate $\mu$ along the function $F : (\X \times \E, \mathcal{B}(\X \times \E)) \to (\Mt^\E, \mathcal{B}(\Mt^\E))$ and, then, pushforward the resulting disintegration $\{\mu^y\}_{y \in \Mt^\E}$ with respect to $\pi_1$ to obtain the Bayes posterior $\{\pi_{1*}\mu^y\}_{y \in \Mt^\E}$. The Bayes posterior is a family of regular conditional distributions $\{\mu_{\mathbf{x} \mid \mathbf{y} = y}\}_{y \in \Mt^\E}$ that is uniquely defined (up to $\mu_{\mathbf{y}}$-almost sure equivalence) by satisfying the condition
\begin{align}\label{eq:regdiscond}
\mu_{\mathbf{x}}(A) = \int_{\Mt^\E} \mu_{\mathbf{x} \mid \mathbf{y} = y}(A)\, d\mu_{\mathbf{y}}(y),
\end{align}
for all $A \in \mathcal{B}(\X)$. By Proposition \ref{prop:disint} there exists a disintegration $\{\mu^y\}_{y \in \Mt^\E}$ of $\mu$ along $F$ and each measure $\mu^y$ is a probability measure that is concentrated on $F^{-1}(y) \subset \X \times \E$. We now prove that the family $\{\pi_{1*} \mu^y\}_{y \in \Mt^\E}$ satisfies the condition \eqref{eq:regdiscond} of a regular conditional distribution. For $A \in \mathcal{B}(\X)$, by Definition \ref{def:disintegration} (iii) we have
\begin{align*}
\mu_{\mathbf{x}}(A) = (\pi_{1*} \mu)(A) = \mu(\pi_1^{-1}(A))
    = \int_{\Mt^\E} \mu^y(\pi_1^{-1}(A))\, dF_*\mu(y)
    = \int_{\Mt^\E} \pi_{1*}\mu^y(A)\, d\mu_{\mathbf{y}}(y).
\end{align*}
Therefore, the defining condition for the posterior is verified. Since $(\Y,d_\Y)$ is a Polish space, the regular conditional distribution 
$\mu_{\mathbf{x} \mid \mathbf{y} = \cdot}$ exists and is unique up to 
$\mu_{\mathbf{y}}$-almost sure equivalence by \cite[Theorem 10.2.2]{dudley2018real}. We conclude that $\pi_{1*} \mu^y = \mu_{\mathbf{x} \mid \mathbf{y} = y}$ for $\mu_{\mathbf{y}}$-a.e. $y \in \Mt^\E$.

Now for part $(4)$ we apply Theorem \ref{thm:optimal_bounds} part (ii). As the integrand does not depend on $e$, the optimal map can be rewritten depending on the posterior distribution from part $(3)$
\begin{align}
\Psi(y) &= \argmin_{z \in \X} \int_{F^{-1}(y)} d_\X(x,z)^p \ d\mu^y(x,e), \\
&= \argmin_{z \in \X} \int_{\pi_1(F^{-1}(y))} d_\X(x,z)^p \ d(\pi_{1*}\mu^y)(x)
\end{align}
Recall that $\pi_{1*}\mu^y = \mu_{\mathbf{x} \mid \mathbf{y}=y}$. As $d_\X$ is induced by an inner product $\langle \cdot, \cdot \rangle$ and the norm $\|\cdot\|$ induced by the inner product, for $p = 2$, and the measure $\pi_{1*}\mu^y$ is supported on $\pi_1(F^{-1}(y))$, the optimal map reduces to
\begin{align*}
\Psi(y)
=
\arg\min_{z \in \X} \int_{\X} \|x - z\|^2\, d\mu_{\mathbf{x} \mid \mathbf{y}=y}(x)
=
\arg\min_{z \in \X} \mathbb{E}_{\mu_{\mathbf{x} \mid \mathbf{y}=y}}\big[\|\mathbf{x} - z\|^2\big],
\end{align*}
which is the Minimum Mean Squared Error (MMSE) for the posterior distribution $\pi_{1*}\mu^y = \mu_{\mathbf{x} \mid \mathbf{y}=y}$. Using bilinearity of the inner product and linearity of expectation, the objective function to be minimised in the expression for the optimal map $\Psi$ can be rewritten as:
\begin{align*}
\mathbb{E}_{\mu_{\mathbf{x} \mid \mathbf{y}=y}}\big[\|\mathbf{x} - z\|^2\big]
&= \mathbb{E}_{\mu_{\mathbf{x} \mid \mathbf{y}=y}}\big[\|\mathbf{x} - \mathbb{E}_{\mu_{\mathbf{x} \mid \mathbf{y}=y}}[\mathbf{x}]  + \mathbb{E}_{\mu_{\mathbf{x} \mid \mathbf{y}=y}}[\mathbf{x}] - z\|^2\big] \\
&= \mathbb{E}_{\mu_{\mathbf{x} \mid \mathbf{y}=y}}\big[\|\mathbf{x} - \mathbb{E}_{\mu_{\mathbf{x} \mid \mathbf{y}=y}}[\mathbf{x}] \|^2\big] + \mathbb{E}_{\mu_{\mathbf{x} \mid \mathbf{y}=y}}\big[\|\mathbb{E}_{\mu_{\mathbf{x} \mid \mathbf{y}=y}}[\mathbf{x}] - z\|^2\big] \\ 
& + 2\mathbb{E}_{\mu_{\mathbf{x} \mid \mathbf{y}=y}}\big\langle \mathbf{x} - \mathbb{E}_{\mu_{\mathbf{x} \mid \mathbf{y}=y}}[\mathbf{x}], \mathbb{E}_{\mu_{\mathbf{x} \mid \mathbf{y}=y}}[\mathbf{x}] - z\big\rangle\\
&= \mathbb{E}_{\mu_{\mathbf{x} \mid \mathbf{y}=y}}\big[\|\mathbf{x} - \mathbb{E}_{\mu_{\mathbf{x} \mid \mathbf{y}=y}}[\mathbf{x}] \|^2\big] + \mathbb{E}_{\mu_{\mathbf{x} \mid \mathbf{y}=y}}\big[\|\mathbb{E}_{\mu_{\mathbf{x} \mid \mathbf{y}=y}}[\mathbf{x}] - z\|^2\big] \\ 
& + 2\big\langle \mathbb{E}_{\mu_{\mathbf{x} \mid \mathbf{y}=y}}[\mathbf{x}] - \mathbb{E}_{\mu_{\mathbf{x} \mid \mathbf{y}=y}}[\mathbf{x}], \mathbb{E}_{\mu_{\mathbf{x} \mid \mathbf{y}=y}}[\mathbf{x}] - z\big\rangle\\
&= \mathbb{E}_{\mu_{\mathbf{x} \mid \mathbf{y}=y}}\big[\|\mathbf{x} - \mathbb{E}_{\mu_{\mathbf{x} \mid \mathbf{y}=y}}[\mathbf{x}] \|^2\big] + \mathbb{E}_{\mu_{\mathbf{x} \mid \mathbf{y}=y}}\big[\|\mathbb{E}_{\mu_{\mathbf{x} \mid \mathbf{y}=y}}[\mathbf{x}] - z\|^2\big] 
\end{align*}
which is the sum of two non-negative quantities, and is thus minimised when
$z = \mathbb{E}_{\mu_{\mathbf{x} \mid \mathbf{y}=y}}[\mathbf{x}]$. Thus, the optimal map is given by $\Psi(y) = \mathbb{E}_{\mu_{\mathbf{x} \mid \mathbf{y}=y}}[\mathbf{x}]$ and coincides with the posterior mean.
\end{proof}

\subsection{Funding statement}
The work of the first author was supported by the UK EPSRC Centre for Doctoral
Training. The work of the last author was supported by Royal Society University Research Fellowship and the Leverhulme Prize 2017. 

\subsection{Competing interest} The authors declare no competing interests.

\bibliographystyle{abbrv}
\bibliography{references}

\end{document}